\def\tri{\mathcal{T}}
\def\bkR{{\rm I\kern-.17em R}}
\def\R{\bkR}
\def\bkZ{{\rm Z\kern-.26em Z}}
\DeclareMathOperator{\e}{e}
\def\Quad{q}
\DeclareMathOperator{\wt}{wt}
\theoremstyle{plain}
\newtheorem{theorem}{Theorem}
\newtheorem*{theorem*}{Theorem}
\newtheorem{lemma}[theorem]{Lemma}
\newtheorem{proposition}[theorem]{Proposition}
\newtheorem{corollary}[theorem]{Corollary}
\newtheorem*{claim*}{Claim}
\theoremstyle{definition}
\newtheorem{definition}[theorem]{Definition}
\newtheorem*{definition*}{Definition}
\newtheorem{question}[theorem]{Question}
\theoremstyle{remark}
\newtheorem{remark}[theorem]{Remark}
\newtheorem{example}{Example}
\numberwithin{equation}{section}
\begin{document}

\title{Bounds for the genus of a normal surface}
\author{William Jaco, Jesse Johnson, Jonathan Spreer and Stephan Tillmann}
\begin{abstract}
This paper gives sharp linear bounds on the genus of a normal surface in a triangulated compact, 
orientable 3--manifold in terms of the quadrilaterals in its cell decomposition---different bounds arise from varying  hypotheses on the surface or triangulation.
 Two
applications of these bounds are given. First, the minimal triangulations of the product of a closed 
surface and the closed interval are determined. Second, an alternative approach 
of the realisation problem using normal surface theory is shown to be less powerful than 
its dual method using subcomplexes of polytopes.
\end{abstract}

\primaryclass{57N10; 57Q15; 57M20; 57N35; 53A05}
\keywords{3--manifold, normal surface, minimal triangulation, efficient triangulation, realisation problem}
\makeshorttitle



\section{Introduction}

The theory of normal surfaces, introduced by Kneser 
\cite{Kneser29ClosedSurfIn3Mflds} and further developed 
by Haken \cite{Haken61TheoNormFl, Haken62HomeomProb3Mflds}, 
plays a crucial role in $3$--manifold topology. 
Normal surfaces allow topological problems to be translated into algebraic problems or linear programs, and they are the
key to many important advances over the last $50$ years, including the solution of the unknot 
recognition problem by Haken~\cite{Haken61TheoNormFl}, 
the $3$--sphere recognition problem by Rubinstein and Thompson~\cite{Rubinstein953SphereRec, 
Rubinstein97PolMinSurf, Thompson943SphereRec}, and the 
homeomorphism problem for Haken 3--manifolds by Haken, Hemion and Matveev~\cite{Haken62HomeomProb3Mflds, Hemion, Mat2003}.

A normal surface in a triangulated 3--manifold is decomposed into triangles and quadrilaterals. It is well known that the topology of a normal surface is determined by the quadrilaterals in its cell structure. In this paper, we give a sharp linear bound on the genus $g$ of a closed, orientable normal surface in terms of the number $q$ of quadrilateral discs in the surface; namely $3q\ge 2g.$ This bound is sharp and significantly improves the previously known bound $7q\ge 2g$ due to Kalelkar~\cite{Kalelkar08ChiQuadrNS}. 

All triangulations in this paper are assumed to be semi-simplicial (alias singular) and all manifolds are orientable, unless stated otherwise.
If one restricts the class of triangulations or the class of normal surfaces, one can improve the above bound. 
For instance, in the case of simplicial 3--manifolds satisfying extra hypotheses, the third author~\cite{Spreer10NormSurfsCombSlic} established the bound $q\ge 4g$ for certain normal surfaces. We establish the bound $7q\ge 6g$ for arbitrary normal surfaces in simplicial 3--manifolds or minimally triangulated irreducible 3--manifolds. We also show that for incompressible surfaces in arbitrary triangulated 3--manifolds, one has $q\ge 2g.$ This gives a very simple, new test for compressibility. Moreover, we show that for any oriented normal surface $S$ in an arbitrary triangulated 3--manifold, we have $q\ge ||\, [S]\, ||,$ where the right hand side is the Thurston norm of the homology class represented by $S.$

Our work is combined with bounds by Burton and Ozlen~\cite{burton10-tree} to give bounds on the genus of a vertex normal surface in terms of the number of tetrahedra of the triangulation, and hence on the smallest genus of an incompressible surface in terms of the complexity of a 3--manifold.
The material described up to now, as well as additional special cases (such as 1--sided surfaces or surfaces with boundary) is given in Section~\ref{sec:genus bound} and complemented with an extended set of examples in Section~\ref{sec:Examples}.

We give two applications of our newly obtained bounds.

$\blacktriangleright$ In Section \ref{sec:main results} we use (a corollary to) the bound for 
essential surfaces to characterise the minimal triangulations
of the cartesian product of an orientable surface of genus $g$
and an interval, $F \times I.$ A key feature of any triangulation of this manifold is that it
contains a canonical splitting surface of genus at
least $g$ and having at least $2g$ quadrilaterals.
The resulting lower bound of $10g-4$ on the number of tetrahedra 
of any triangulation of $F \times I$ is attained by the minimal triangulations, which all arise as inflations of the cones of 1--vertex triangulations of $F.$

$\blacktriangleright$ Given a combinatorial orientable surface $S$, the problem of finding 
a polyhedral embedding of $S$ into $\mathbb{R}^3$ is known as the {\em realisation
problem}. One particular sub-problem is
to find realisable surfaces, where the genus is as large as possible
with respect to the number of vertices of the surface.
The state-of-the-art technique to obtain the best known lower bound for the genus is due to Ziegler~\cite{Ziegler08PolSurfsOfHighGenus}, who projects $2$--dimensional subcomplexes
of polytopes into $\mathbb{R}^3.$ However, experimental evidence suggests great potential for improvement of this bound and thus new techniques to tackle this problem are highly sought after. In Section~\ref{sec:polReal}
we show that the dual method (using normal surfaces instead of 
subcomplexes) cannot yield any improved bounds. Given the generality of this approach and the similarity to the
powerful subcomplex method this is a surprising result,
which gives new insights into the well-studied realisation problem.

\textbf{Acknowledgements}\quad The first author is partially supported by the Grayce B.\thinspace Kerr Foundation. The third author is supported by the Australia-India Strategic Research Fund (project number AISRF06660).
The fourth author is partially supported under the Australian Research Council's Discovery funding scheme (project number DP130103694), and thanks the Max Planck Institute for Mathematics, where parts of this work have been carried out, for its hospitality.



\section{Preliminaries}
\label{sec:preliminaries}

The notation and terminology of \cite{JR} and \cite{ti} will be used in this paper, and is briefly recalled in this section. Only the material in \S\ref{subsec:Triangle and quadrilateral regions} is not part of the standard repertoire: here, we define \emph{quadrilateral regions} and \emph{triangle regions} in normal surfaces.


\subsection{Triangulations}

A triangulation $\tri$ consists of a union of pairwise disjoint 3--simplices, $\widetilde{\Delta},$ a set of face pairings, $\Phi,$ and a natural quotient map $p\co \widetilde{\Delta} \to \widetilde{\Delta} / \Phi = M,$ which is required to be injective on the interior of each simplex of each dimension. Here, $\widetilde{\Delta}$ is given the natural simplicial structure with four vertices for each 3--simplex. It is customary to refer to the image of a 3--simplex as a \emph{tetrahedron in $M$} (or \emph{of the triangulation}) and to refer to its faces, edges and vertices with respect to the pre-image. Images of 2--, 1--, and 0--simplices, will be referred to as \emph{faces}, \emph{edges} and \emph{vertices in $M$} (or \emph{of the triangulation}), respectively. The quotient space $M$ is a \emph{pseudo-manifold} (possibly with boundary), and the set of non-manifold points is contained in the 0--skeleton.

The \emph{degree} of an edge in $M$ is the number of 1--simplices in $\widetilde{\Delta}$ that map to it. A triangulation of $M$ is \emph{minimal} if it minimises the number of tetrahedra in $M.$ 

\medskip
If a triangulation $M$ is also a simplicial complex we say that $M$ is a {\em simplicial triangulation}. A simplicial triangulation in which a simplicial neighborhood of each vertex is a simplicial triangulation of the $2$-sphere is referred to as a {\em combinatorial $3$-manifold}. By construction, given an arbitrary triangulation of a closed and compact $3$-manifold, its second barycentric subdivision is a combinatorial $3$-manifold. A combinatorial $3$-sphere is called {\em polytopal} if it is isomorphic to 
the boundary complex of a convex $4$-polytope. Note that not all combinatorial $3$-spheres
are polytopal whereas all simplicial triangulations of the $2$-sphere are isomorphic to the boundary complex of a convex $3$-polytope
\cite{Steinitz06UberEulPolyederRel}.


\subsection{Complexity}
\label{prelim:complexity}

There are different approaches to define the complexity of a 3--manifold. In this paper, the \emph{complexity} $c(M)$ of the compact 3--manifold $M$ is the number of tetrahedra in a minimal (semi-simplicial) triangulation. It follows from the definition that for every integer $k,$ there are at most a finite number of 3--manifolds with  complexity $k$, and it is shown in \cite{JRT-2}, that there is at least one closed, irreducible, orientable 3--manifold of complexity $k.$ Given a closed, irreducible 3--manifold, this complexity agrees with the complexity defined by Matveev~\cite{Mat1990} unless the manifold is $S^3,$ $\R P^3$ or $L(3,1).$ The complexity for an infinite family of closed manifolds has first been given in \cite{JRT}.  The complexity determined here for the infinite family of manifolds with boundary of the form $F\times I$ adds to the known complexities  for handlebodies, where a straight forward Euler characteristic argument gives that $c(\mathbb{H}_g) = 3g-2$, where $\mathbb{H}_g$ is the handlebody of genus $g\ge 1$.

Matveev's complexity of a 3--manifold is defined as the minimal number of true vertices in an almost simple spine for the manifold. It has the following finiteness property: For every integer $k,$ there exist only a finite number of pairwise distinct compact, irreducible, boundary irreducible 3--manifolds that contain no essential annuli and have complexity $k.$ This complexity has been computed for various infinite families of hyperbolic 3--manifolds with one totally geodesic boundary component, see for instance \cite{FV} and \cite{FMP}. However, if one removes the hypothesis on essential annuli, there may be infinitely many 3--manifolds of a given Matveev complexity. In particular, the manifolds of the form $F\times I,$ where $F$ is a closed, orientable surface, and $I$ is a closed interval, have Matveev complexity equal to zero, and we determine their complexity in \S\ref{subset:complexity of FxI}.


\subsection{Normal surfaces}

A {\em normal surface} $S$ in $M$ is a properly embedded
surface that meets each tetrahedron $\Delta$ of $M$ in a disjoint
collection of triangles and quadrilaterals, each running between
distinct edges of $\Delta$, as illustrated in Figure
\ref{fig:normalSubsets}. There are four \emph{triangle types} and three
\emph{quadrilateral types} according to which edges they meet. Within each
tetrahedron there may be several triangles or quadrilaterals of any
given type; collectively these are referred to as {\em normal pieces}.
The intersection of a normal piece of a tetrahedron with one of its
faces is called {\em normal arc}; each face has three \emph{arc types}
according to which two edges of the face an arc meets.

\begin{figure}[htb]
	\centering
	\includegraphics[width=.90\textwidth]{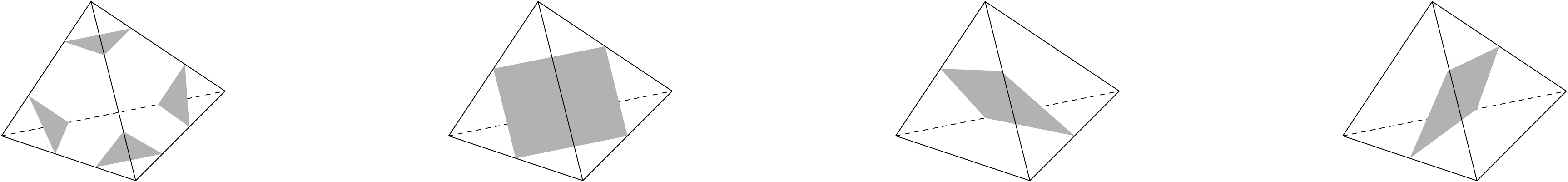}
	\caption{Normal triangles and quadrilaterals within a tetrahedron.}
	\label{fig:normalSubsets}
\end{figure}

Counting the number of pieces of each type for
a normal surface $S$ gives rise to a $7$-tuple per 
tetrahedron of $M$ and hence a $7n$-tuple of non-negative integers describing
$S$ as a point in $\mathbb{R}_{\geq 0}^{7n}$, called its {\em normal coordinates}. 
Such a point must satisfy a set of linear homogeneous
{\em matching equations} (one for each arc type of each internal face).
The solution set to these constraints in $\mathbb{R}_{\geq 0}^{7n}$
is a polyhedral cone whose cross-section polytope
is called the {\em projective solution space}.
Not all of the rational points in this polytope
give rise to a valid normal surface:
For each tetrahedron, at most one of the
three quadrilateral coordinates can be
non-zero. This condition is called the {\em quadrilateral constraints}, and it can
be shown that each rational point in the projective solution space satisfying
the quadrilateral constraints corresponds to a normal surface. These
points and their coordinates are called {\em admissible}.
If a normal surface corresponds to a vertex in the projective solution space
it is called a {\em vertex normal surface}, or {\em extremal surface} 
meaning that its coordinates lie on an extremal ray of the solution cone.
The easiest example of such a vertex normal surface is the boundary of a small
neighborhood around a vertex, called a {\em vertex link}---if $M$ is a manifold, then this is necessarily
a sphere or disc consisting entirely of triangles.

\medskip
Due to work by Tollefson \cite{Tollefson98QuadTheory} 
we know that any normal surface without vertex linking
components is determined by its quadrilaterals and hence by a vector in
$\mathbb{R}_{\geq 0}^{3n}$. In this case, the matching equations are given
by the intersection of the quadrilaterals and the edges of the triangles
called the {\em $Q$-matching equations}. Intuitively, these equations arise from the fact that as one
circumnavigates the earth, one crosses the equator from north to south
as often as one crosses it from south to north.
We now give the precise form of these equations.
To simplify the discussion,
we assume that $M$ is oriented and all tetrahedra are given
the induced orientation; see \cite[Section~2.9]{ti} for
details.

\begin{figure}[h]
    \centering
    \subfigure[Abstract neighbourhood $B(e)$]{%
        \label{fig:matchingquadbdry}%
        \includegraphics[scale=1.1]{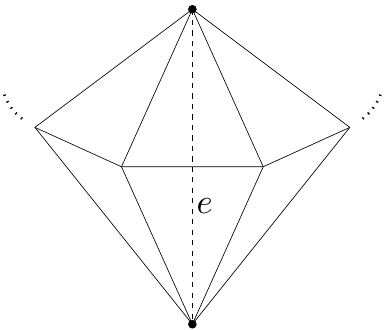}}
    \qquad
    \subfigure[Positive slope $+1$]{%
        \label{fig:matchingquadpos}%
        \includegraphics[scale=1.1]{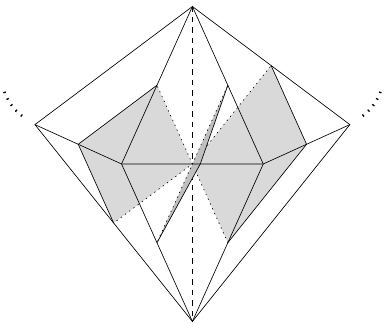}}
    \qquad
    \subfigure[Negative slope $-1$]{%
        \label{fig:matchingquadneg}%
        \includegraphics[scale=1.1]{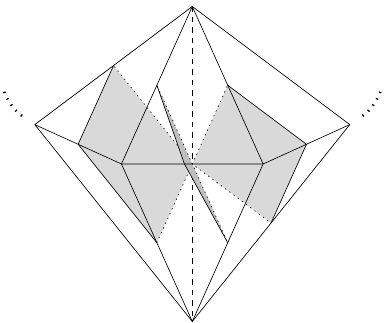}}
    \caption{Slopes of quadrilaterals}
    \label{fig:slopes}
\end{figure}

Consider the collection $\mathcal{C}$ of all (ideal) tetrahedra meeting
at an edge $e$ in $M$ (including $k$ copies of tetrahedron $\sigma$ if $e$ occurs
$k$ times as an edge in $\sigma$).  We form the \emph{abstract
neighbourhood $B(e)$} of $e$ by pairwise identifying faces of tetrahedra
in $\mathcal{C}$ such that there is a well defined quotient map from
$B(e)$ to the neighbourhood of $e$ in $M$; see
Figure~\ref{fig:matchingquadbdry} for an illustration.  Then  $B(e)$ is a ball
(possibly with finitely many points missing on its boundary).  We think
of the (ideal) endpoints of $e$ as the poles of its boundary sphere, and
the remaining points as positioned on the equator.

Let $\sigma$ be a
tetrahedron in $\mathcal{C}$. The boundary square of a normal
quadrilateral of type $q$ in $\sigma$ meets the equator of $\partial
B(e)$ if and only it has a vertex on $e$. In this case, it has a slope
$\pm1$ of a well--defined sign on $\partial B(e)$ which is independent of the
orientation of $e$. Refer to Figures~\ref{fig:matchingquadpos} and
\ref{fig:matchingquadneg}, which show quadrilaterals with
\emph{positive} and \emph{negative slopes} respectively.

Given a quadrilateral type $q$ and an edge $e,$ there is a
\emph{total weight} $\wt_e(q)$ of $q$ at $e,$ which records the sum of
all slopes of $q$ at $e$ (we sum because $q$ might meet $e$ more than
once, if $e$ appears as multiple edges of the same tetrahedron).
If $q$ has no corner on $e,$ then we set
$\wt_e(q)=0.$ For normal coordinates $x$, the set of quadrilateral
types $\square$, and an edge $e$ in $M,$ the $Q$--matching equation of $e$
is then defined by $0 = \sum_{q\in \square}\; \wt_e(q)\;x(q)$.

The following result is related to Haken's Hauptsatz 2 in \cite{Haken61TheoNormFl}, see \cite[Theorem~2.4]{ti} for a proof in the setting of this paper.

\begin{theorem}\label{thm:admissible integer solution gives normal}
For each $x\in \R^{3n}$ with the properties that $x$ has integral coordinates, $x$ is admissible and
$x$ satisfies the $Q$--matching equations, there is a (possibly
non-compact) normal surface $S$ such that $x = x(S).$ Moreover, $S$ is
unique up to normal isotopy and adding or removing vertex linking surfaces,
i.e., normal surfaces consisting entirely of normal triangles.
\end{theorem}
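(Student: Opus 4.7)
The plan is to construct $S$ by placing the prescribed number of quadrilaterals of each type in each tetrahedron, then determining the normal triangles needed to close up the surface. Admissibility ensures that in each tetrahedron only one quadrilateral type appears with non-zero coordinate, so the required quadrilaterals can be realised as pairwise disjoint parallel discs. What remains is to choose, in each tetrahedron $\sigma$ and at each vertex $v$ of $\sigma$, a non-negative integer $t(\sigma,v)$ counting triangles of the type cutting off $v$, so that the resulting disjoint collection of triangles and quadrilaterals glues across faces to form a properly embedded surface.

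First I would analyse the gluing condition face by face. For each internal face $f$ shared by tetrahedra $\sigma,\sigma'$ and each arc type on $f$, count the normal arcs contributed by the quadrilaterals in $\sigma$ versus $\sigma'$; matching across $f$ forces a linear equation expressing the relevant $t(\sigma,v)$ in terms of $t(\sigma',v)$ and the quadrilateral coordinates. The crux is showing that these face equations admit a simultaneous non-negative integer solution, and this is where the $Q$--matching equations enter. Fix an edge $e$ of $M$ with abstract neighbourhood $B(e)$ as in Figure~\ref{fig:matchingquadbdry}. Walking once around $e$ through the cyclic sequence of tetrahedra and faces in $B(e)$, the face-matching equations express the triangle counts at the corners incident to $e$ as successive translates of one another, with the increment at each step equal to the slope $\pm 1$ or $0$ (Figures~\ref{fig:matchingquadpos}--\ref{fig:matchingquadneg}) of the encountered quadrilateral corner. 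Closing the loop around $e$ produces a total shift equal to $\sum_{q\in\square}\wt_e(q)\,x(q)$, which by hypothesis is zero, so the prescription is globally consistent around every edge.

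Second I would globalise to each vertex $v\in M$. The previous step shows that around each edge incident to $v$ the counts $t(\sigma,v)$ are determined up to a single global additive constant, namely the number of parallel copies of the vertex-linking surface at $v$. Choosing this constant at each vertex to be the smallest value that keeps every $t(\sigma,v)$ a non-negative integer yields a canonical choice; assembling the triangles and quadrilaterals in each tetrahedron (using that within a single tetrahedron disjoint pieces of the prescribed types and multiplicities always exist) produces the desired surface $S$ with $x(S)=x$. Uniqueness up to normal isotopy and up to vertex-linking components is immediate: any other realisation of $x$ differs at each vertex only by the additive freedom just identified, which is precisely the freedom to add or remove copies of the link of that vertex. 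If a vertex $v$ is non-manifold, the corresponding link is non-compact and the resulting $S$ is non-compact, as allowed.

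The main obstacle I anticipate is the careful bookkeeping around each edge: one has to align the slope conventions of Figure~\ref{fig:slopes} with the local combinatorics of arcs on the faces of $B(e)$ and verify that the cyclic telescoping of the face-matching equations around $e$ yields exactly $\sum_q \wt_e(q)\,x(q)$ (and not some other signed combination). Once this identification is made, existence and the stated uniqueness follow essentially formally from the analysis in each $B(e)$.
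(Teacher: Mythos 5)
Your outline follows the standard Tollefson-type argument and matches the strategy of the proof the paper cites (the paper defers to \cite[Theorem~2.4]{ti} rather than giving an argument): fix the quadrilateral discs dictated by $x$, propagate the triangle counts $t(\sigma,v)$ across faces, and use the $Q$-matching equations to close up loops around edges.

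There is, however, a genuine gap in the globalisation step. Telescoping the face-matching relations around the abstract neighbourhood of an edge $e$ shows that the shift $1$-cocycle vanishes on the cycle in the dual $1$-skeleton of $\operatorname{link}(v)$ that encircles the corresponding vertex of the link; but the vertex-star cycles generate the entire cycle space of that dual graph only when $H_1(\operatorname{link}(v)) = 0$, i.e.\ when the link is a sphere or a disc. In the semi-simplicial setting the link of a vertex can be a closed surface of positive genus (a non-manifold vertex), and in the ideal setting a non-compact surface; there the $Q$-matching equations do not control the remaining cycles, the shifts need not cobound, and \emph{no} finite non-negative assignment of $t(\sigma,v)$ exists. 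This is exactly the situation in which the surface guaranteed by the theorem must be non-compact (``spun'' about that vertex). Your proposal to ``choose the constant at each vertex to be the smallest value keeping $t(\sigma,v)\ge 0$'' presupposes that such a constant is available, which fails here, and your final sentence misattributes the source of non-compactness: a non-manifold vertex has a \emph{compact} link of positive genus, while non-compact links come from ideal vertices. Moreover, the construction of the non-compact (spiralling) representative is left unaddressed. For the case in which every vertex link is $S^2$ or $D^2$ your sketch is correct and produces a compact $S$, but as written it does not establish the statement in the generality in which it is asserted.
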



\subsection{Triangle and quadrilateral regions}
\label{subsec:Triangle and quadrilateral regions}

Let $S$ be a normal surface in a triangulated, compact 3--manifold $M.$ Denote by $S_\Delta$ the subcomplex of $S$ made up of all triangles in $S,$ $S_\square$ the subcomplex made up of all quadrilaterals, and $S^{(0)}$ the set of its vertices. A \emph{triangle region} in $S$ is the closure in $S$ of a connected component of $S_\Delta \setminus S^{(0)},$ and a \emph{quadrilateral region} in $S$ is the closure in $S$ of a connected component of $S_\square \setminus S^{(0)}.$ (Kalelkar~\cite{Kalelkar08ChiQuadrNS} calls these regions \emph{strongly connected}.)

All triangles in a triangle region link the same vertex of $M.$ In fact, since $S$ contains at most finitely many normal triangles,
a triangle region in $S$ never contains two normally isotopic triangles. 
To see this, suppose $\Delta_a$ and $\Delta_b$ are two
normally isotopic triangles in same triangle region $R.$ 
Then $\Delta_a$ and $\Delta_b$ are contained in some tetrahedron $\sigma_0.$ 
Without loss of generality, we may assume that the corner cut off by $\Delta_a$ from $\sigma_0$ contains no normal discs, and the corner cut off by $\Delta_b$ contains only $\Delta_a.$ Since $R$ is a triangle region, there is a path of normal triangles 
$$ \Delta_b = \Delta_0, \,\, \Delta_1,\,\, \ldots \,\, , \,\,\Delta_k = \Delta_a$$
with the property that subsequent triangles share a common edge, and we may assume that all these normal triangles are pairwise distinct. We denote by $\sigma_j$ the tetrahedron containing $\Delta_j,$ and note that these tetrahedra are not necessarily pairwise distinct.
Since $\Delta_1$ is glued to $\Delta_b$ in $R$ along a normal arc, it follows that $\Delta_a$ is glued in $S$ to a normal disc contained in the corner cut off by $\Delta_1$ in $\sigma_1.$ Thus $\Delta_a$ is glued in $S$ to a normal triangle, denoted $\Delta_1',$ along a normal arc. It follows that $\Delta_1'$ is also contained in $R.$ Iterating this argument gives, for each $j,$ a normal triangle $\Delta_j'$ in $R$ which is in the corner cut off by $\Delta_j$ in $\sigma_j.$ But this contradicts the fact that the corner in $\sigma_k$ cut off by $\Delta_k = \Delta_a$ contains no normal triangle.

A triangle region is a 2--complex, and not necessarily a surface with boundary. We therefore say that 
the \emph{topological type} of a triangle region $R$ in $S$ is the topological type of its interior. From the above discussion, we know that this is the topological type of a topologically finite planar surface. Given a triangle region $R$ in $S,$ choose a compact core $C$ of the planar surface ${int}(R).$ Then $C$ is a compact planar surface, and each of its boundary components naturally corresponds to a graph made up of quadrilateral edges. These graphs are termed \emph{chains of (unglued) quadrilateral edges}. 




\section{Bounds on genera of normal surfaces}
\label{sec:genus bound}

Given the closed, orientable, connected surface $S$ of genus $g=g(S),$ there are at least $2g$ branches in any spine for $S.$
If $S$ is a normal surface in the triangulated 3--manifold $M,$ denote $\Quad(S)$ the number of quadrilateral discs in $S.$ We will give several bounds on the genus of $S,$ and indicate whether or not they are known to be sharp.


\subsection{Quadrilateral surfaces}

The first bound is based on a simple Euler characteristic argument and applies to surfaces entirely made up of quadrilaterals.

\begin{lemma}[Bound for quadrilateral surfaces]\label{lem:inequ for quad surf}
Suppose $M$ is a triangulated, compact, orientable 3--manifold, and let $S$ be a closed, connected, orientable normal surface in $M$ consisting entirely of quadrilaterals and having exactly $v$ vertices in its cell structure. Then $q(S) = 2 g(S) + v-2.$ In particular, if $v\ge 2,$ then $q(S) \ge 2 g(S).$ If $v=1,$ then for each quadrilateral in $S$, there is a vertex normal surface in $M$ having exactly one quadrilateral disc (and possibly some triangles) in its cell structure.
\end{lemma}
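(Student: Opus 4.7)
The plan is to prove the equality $q(S) = 2g(S) + v - 2$ by a direct Euler-characteristic computation, from which the bound for $v \ge 2$ is immediate, and then to handle the $v = 1$ case by exhibiting, for each quadrilateral type appearing in $S$, a vertex normal surface containing exactly one quadrilateral disc.

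\medskip

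Since $S$ is closed and every $2$--cell of $S$ is a quadrilateral, each $1$--cell of $S$ is shared by exactly two quadrilaterals. Hence $S$ has $2q(S)$ edges, $q(S)$ faces and $v$ vertices, so
\[
\chi(S) \;=\; v - 2q(S) + q(S) \;=\; v - q(S).
\]
Combining this with $\chi(S) = 2 - 2g(S)$ yields $q(S) = 2g(S) + v - 2$, and the bound $q(S) \ge 2g(S)$ for $v \ge 2$ is immediate.

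\medskip

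Suppose now $v = 1$. The vertices of the cell structure of $S$ are the points of $S \cap M^{(1)}$, so this assumption means that $S$ meets $M^{(1)}$ in a single point $p$ on a single edge $e$ of $M$, and every corner of every quadrilateral disc of $S$ is identified to $p$. Fix a quadrilateral type $q_0$ appearing in $S$ in a tetrahedron $\sigma_0$; then all four corner edges of $q_0$ in $\sigma_0$ are identified to $e$. I would consider the vector $x \in \R^{3n}$ with a $1$ in coordinate $q_0$ and zero elsewhere, and verify that it is admissible (immediate, as only one quadrilateral coordinate is non-zero) and satisfies every $Q$--matching equation. For any edge $e' \ne e$, no corner of $q_0$ lies on $e'$ so $\wt_{e'}(q_0) = 0$ trivially. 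At $e$, the equation reduces to $\wt_e(q_0) = 0$, which is the sum of the slopes of $q_0$ over its four corners, all of which are identified to $e$.

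\medskip

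The key local input is that the signed sum of slopes of any single normal quadrilateral over its four corners equals zero: the four corner edges come in two pairs of opposite edges of the tetrahedron, and the slope contributions within each pair cancel. This may be read off the conventions of Figure~\ref{fig:slopes}, or seen topologically by pushing $\partial q_0$ slightly off the four corner edges in $\sigma_0$, so that the push-off bounds a disc in the complement and hence has trivial winding around each corner edge. Granting this, $\wt_e(q_0) = 0$, and Theorem~\ref{thm:admissible integer solution gives normal} produces a normal surface $S_0$ with $x(S_0) = x$, unique up to vertex links. Since $x$ spans an extremal ray of $\R^{3n}_{\ge 0}$, it spans an extremal ray of the solution cone, so $S_0$ is a vertex normal surface containing exactly one quadrilateral disc (of type $q_0$) together with, possibly, some normal triangles. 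The main obstacle is the local slope-cancellation identity; everything else is a direct application of the $Q$--matching formalism.
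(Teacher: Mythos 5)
Your proof is correct and follows essentially the same approach as the paper: the Euler-characteristic computation is identical, and for the $v=1$ case you make explicit the key local fact (that the four slope contributions of a single quadrilateral sum to zero, so its $Q$--coordinate vector alone satisfies the $Q$--matching equations) that the paper's proof uses implicitly when it asserts that each $Q_k$ is a solution. One small caveat on the supporting heuristic: the slopes of a quadrilateral are in fact \emph{equal} on each pair of opposite edges and opposite \emph{across} the two pairs (as follows from the orientation-reversing symmetry of the tetrahedron swapping the two separated vertices on one side), so the cancellation is between the two pairs rather than within each pair, but the total is zero either way and the argument stands.
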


\qquad $\blacktriangleright$ Examples~\ref{ex:quadEx1} and \ref{ex:quadEx2} show that this bound is sharp.\footnote{All examples are collected in \S\ref{sec:Examples}. Brief remarks are indicated with $\blacktriangleright$ as in the text here.}\\

\begin{proof}
The proof is a simple Euler characteristic argument. Note that for a quadrilateral surface, the number of edges is exactly twice the number of quadrilaterals. Thus the Euler characteristic formula gives us $\chi(S) = v - 2q(S) + q(S) = v - q(S)$. Substituting $\chi(S) = 2 - 2 g(S)$ yields $q(S) = 2 g(S) + (v-2).$ 

Now suppose $S$ has exactly one vertex in its cell structure. In this case, all corners of quadrilaterals are identified and $S$ meets each tetrahedron in at most one quadrilateral disc. Since all corners of quadrilaterals are identified and $[S]_{Std} = \sum Q_k,$ where $Q_i\neq Q_j,$ we see that each $Q_k$ is a solution to the $Q$--matching equations. In particular $Q_k$ is a vertex solution to the $Q$--matching equations.
\end{proof}


\subsection{Closed normal surfaces and applications}

\begin{theorem}[Bound for closed normal surfaces]\label{thm:genus}
	Let $M$ be a triangulated, compact, orientable 3--manifold, 
	and $S$ be a closed, connected, orientable normal surface in $M.$ Then
	$$3 \Quad(S)\ge 2 g(S).$$
\end{theorem}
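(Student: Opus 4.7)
The plan is to prove the theorem via the decomposition of $S$ into triangle regions $R_1,\dots,R_l$ and quadrilateral regions $Q_1,\dots,Q_k$ from Section~\ref{subsec:Triangle and quadrilateral regions}, meeting along the chain graph $\Gamma$ consisting of all triangle--quadrilateral edges. Two degenerate cases dispose quickly: if $S$ has no quadrilaterals, then $S$ is a vertex link in the $3$--manifold $M$, hence a sphere with $g=0$; if $S$ has no triangles, Lemma~\ref{lem:inequ for quad surf} already yields the stronger $q\geq 2g$, after disposing of the one-vertex case.

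In the main case, I would derive a per-region inequality and then sum. For each quadrilateral region $Q_j$, letting $q_j$ be the number of quadrilaterals and $m_j$ the number of boundary (T--Q) edges of $Q_j$, a direct cell count gives $\chi(Q_j) = V(Q_j) - q_j - m_j/2$. Topologically, in the generic situation $Q_j$ is a compact orientable surface of genus $h_j$ with $\ell_j$ boundary circles (the compact core of its interior), so $\chi(Q_j) = 2 - 2 h_j - \ell_j$. The elementary bound $m_j \leq 4 q_j$ (each quadrilateral has four edges) and the inequality $V(Q_j) \geq \ell_j$ (when chains are disjoint circles, each contributes at least one distinct vertex to $Q_j$) together yield the key per-region inequality
\[ 3 q_j \;\geq\; 2 h_j + 2 \ell_j - 2. \]

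Summing over $j$ and using the Mayer--Vietoris identity $\chi(S) = \sum_i \chi(R_i) + \sum_j \chi(Q_j) - \chi(\Gamma)$ together with the planarity of the triangle regions ($\chi(R_i) = 2 - n_i$, where $n_i$ is the number of boundary circles of a compact core of the interior of $R_i$) produces the genus decomposition $g(S) = 1 - k - l + \sum_j h_j + c$, where $c$ is the total number of chains. Plugging the summed inequality into this formula yields $3q \geq 2g + 2(l - 1) \geq 2g$, since $l \geq 1$ in this case.

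The main technical obstacle is when the chain graph has branching, i.e.\ when some vertex of $S$ lies on more than two triangle--quadrilateral edges. There $\Gamma$ is not a disjoint union of circles, the $2$--complex $Q_j$ has vertex pinches so $\chi(Q_j)$ no longer equals $2 - 2h_j - \ell_j$, and the clean form $V(Q_j) \geq \ell_j$ can fail. Fortunately, branching forces $\chi(\Gamma) < 0$, which only increases $\chi(S)$ and therefore decreases $g$, so the bound should only become sharper. The rigorous resolution is either a vertex-splitting argument at each branch point that reduces to the circle-chain case while respecting the per-region inequality, or a careful Mayer--Vietoris bookkeeping of the extra Euler-characteristic contributions at branch vertices; I expect the former to be cleaner.
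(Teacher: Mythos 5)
You take a genuinely different route from both of the paper's proofs. The paper's first proof collapses interior edges of each triangle region to make the region simply connected, chooses a spine carried by quadrilateral edges, and observes that each quadrilateral contributes at most three branches unless $S$ is a sphere or torus; the second proof sharpens this via transversely oriented short/long edges. Your approach is an Euler-characteristic bookkeeping over regions, and in the ``generic'' situation you describe the algebra is correct: the cell count $\chi(Q_j)=V(Q_j)-q_j-m_j/2$ is right, $m_j\le 4q_j$ and $V(Q_j)\ge\ell_j$ do give $3q_j\ge 2h_j+2\ell_j-2$, and combining this with $\chi(S)=\sum\chi(R_i)+\sum\chi(Q_j)-\chi(\Gamma)$ and $\sum n_i=\sum\ell_j=c$ does yield $3q\ge 2g+2(l-1)$. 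That is a nice alternative derivation of the constant $3$.

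The gap is the non-generic case, which you name but do not resolve, and which is not merely a technicality. When the chain graph $\Gamma$ branches or a region is pinched at a vertex, three ingredients fail at once: (i) $\chi(Q_j)=2-2h_j-\ell_j$ no longer holds, since a pinched $Q_j$ has strictly smaller Euler characteristic than the surface-with-boundary obtained by resolving the pinches; (ii) $V(Q_j)\ge\ell_j$ can fail, since several boundary chains can pass through a single vertex; and (iii) the identity $\chi(S)=\sum\chi(R_i)+\sum\chi(Q_j)-\chi(\Gamma)$ as written requires every vertex of $\Gamma$ to lie in exactly one $R_i$ and one $Q_j$, which is exactly what branching destroys. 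Your heuristic that ``branching forces $\chi(\Gamma)<0$, which only increases $\chi(S)$ and therefore decreases $g$'' addresses only the Mayer--Vietoris correction term and ignores the simultaneous degradation of the per-region bounds in (i) and (ii); these two effects pull in opposite directions, so the sign of the net correction is not obvious and would have to be argued. A ``vertex-splitting argument that reduces to the circle-chain case'' is plausible but must be spelled out, since splitting a vertex of $\Gamma$ changes $V(Q_j)$, $\chi(\Gamma)$, and the region count simultaneously. The paper's spine construction sidesteps all of this precisely because it never needs the regions to be honest surfaces with boundary. As written, your proposal establishes the theorem only under extra genericity hypotheses.
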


\qquad $\blacktriangleright$ This improves the bound of $7 \Quad(S) \ge 2 g(S)$ given by Kalelkar~\cite{Kalelkar08ChiQuadrNS}.\\
\qquad $\blacktriangleright$ Examples~\ref{ex:genleq3} and \ref{ex:nonor} show that this is a sharp bound.\\

We will give two proofs: the first is short; the second (given in the next subsection) provides extra insight in the structure of orientable normal surfaces, that will be used in some of the corollaries.

\begin{proof}[First proof of Theorem~\ref{thm:genus}] First note that the inequality clearly holds if $S$ is a sphere; it also holds if $S$ is a torus since a closed normal surface with no quadrilateral discs is a vertex linking sphere.

The triangle regions in $S$ are planar surfaces (see \S\ref{subsec:Triangle and quadrilateral regions}). We will construct a new cell structure on $S$ by modifying the original one as follows: Let $R$ be a triangle region. Given an edge incident with two triangles in $R$ and connecting different boundary components of $R,$ we will shrink this edge to a point, turning the two adjacent triangles into bigons (we term this process \emph{collapsing}). This connects the two boundary components. We will then continue in this fashion until all the boundary components of $R$ have been joined into a single connected graph.

If a normal triangle in $R$ connects three distinct boundary components of $R,$ then we will need to shrink at most two of its edges, collapsing the triangle at most into a monogon. Since this is the most we would ever need to collapse a triangle, each triangle collapses to either a triangle, bigon or monogon (but not a single vertex). Because the original triangle region was planar, its image after this operation will have simply connected interior. 

Apply the above construction to all triangle regions in $S,$ giving $S$ a new cell decomposition. Then we can find a spine for $S$ carried by edges in this cell structure that are disjoint from the interior of any of the regions corresponding to triangle regions in the original cell decomposition. Such a spine will consist of edges of quadrilaterals from the original cell structure.

If some quadrilateral has all four edges in the spine, then the complement of the spine consists entirely of the interior of the quadrilateral disc. But then the four edges of the quadrilateral must be identified in pairs, and hence $S$ is a sphere or a torus. We already know that the inequality holds in this case. So we may assume that each quadrilateral has at most three edges in the spine, and so there are at most $3 \Quad(S)$ branches in the spine. Since the number of branches is at least $2 g(S),$ we have the claimed inequality.
\end{proof}

The following corollaries are consequences of the relationship between genus and Euler characteristic. Recall that for a \emph{non-orientable} surface $S,$ one has $\chi(S) = 2 - g(S),$ where the genus $g(S)$ of a non-orientable surface is the number of cross-caps.

\begin{corollary}[Bound for non-orientable normal surfaces]
	\label{for:nonorSurfs}
	Let $M$ be a triangulated, compact, orientable 3--manifold, 
	and $S$ be a closed, connected non-orientable normal surface in $M.$
Then	 $$g(S)  \le 3 \Quad(S)+1.$$
\end{corollary}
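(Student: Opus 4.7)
The natural plan is to reduce to the orientable case by passing to the orientation double cover. Since $M$ is orientable and $S$ is non-orientable, $S$ is one-sided in $M$, so a regular neighbourhood $N(S)$ is a twisted $I$-bundle over $S$, and its boundary $\widetilde{S} = \partial N(S)$ is the orientation double cover of $S$ — in particular $\widetilde{S}$ is closed, connected, and orientable.

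The key observation is that $\widetilde{S}$ is itself a normal surface whose normal coordinate vector is $2\,x(S)$. Indeed, $2\,x(S)$ has non-negative integer entries, still satisfies the quadrilateral constraints (only the quadrilateral types already used by $S$ appear, each with twice the multiplicity), and still satisfies the $Q$--matching equations (which are linear and homogeneous). By Theorem~\ref{thm:admissible integer solution gives normal}, this vector is realised by a normal surface unique up to normal isotopy and vertex linking components, and this realisation must be $\widetilde{S} = \partial N(S)$. In particular $\Quad(\widetilde{S}) = 2\Quad(S)$.

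Next I would compute $g(\widetilde{S})$ via Euler characteristic. Since $\widetilde{S} \to S$ is a $2$--sheeted cover, $\chi(\widetilde{S}) = 2\chi(S) = 2(2 - g(S))$. Because $\widetilde{S}$ is orientable, $\chi(\widetilde{S}) = 2 - 2g(\widetilde{S})$, so $g(\widetilde{S}) = g(S) - 1$. Applying Theorem~\ref{thm:genus} to the closed, connected, orientable normal surface $\widetilde{S}$ gives
\[
3\Quad(\widetilde{S}) \;\ge\; 2g(\widetilde{S}) \quad\Longleftrightarrow\quad 6\Quad(S) \;\ge\; 2(g(S)-1),
\]
which rearranges to $g(S) \le 3\Quad(S) + 1$.

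The only subtlety I expect is handling vertex linking components: Theorem~\ref{thm:admissible integer solution gives normal} determines the surface realising $2\,x(S)$ only up to adding or removing vertex links, so a priori $\partial N(S)$ might differ from the ``canonical'' realisation by such sphere/disc components. This is harmless because vertex links contain no quadrilaterals, so the count $\Quad(\widetilde{S}) = 2\Quad(S)$ is unaffected, and the genus computation is done on the connected double-cover component of $S$, which is the only non-vertex-linking piece. A minor edge case to mention is $g(S) = 1$ (i.e.\ $S = \mathbb{RP}^2$), where $\widetilde{S}$ is a sphere and the inequality $g(S) \le 3\Quad(S) + 1$ reduces to the trivially true $1 \le 3\Quad(S) + 1$.
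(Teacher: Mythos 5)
Your proof is correct and follows the same route as the paper: double the normal coordinates to obtain the orientable double cover $\widetilde{S}=\partial N(S)$ with $\Quad(\widetilde{S})=2\Quad(S)$, then apply Theorem~\ref{thm:genus}. In fact your Euler-characteristic computation $g(\widetilde{S})=g(S)-1$ (using the cross-cap convention $\chi(S)=2-g(S)$ stated just before the corollary) is the correct one, and it yields the stated bound $g(S)\le 3\Quad(S)+1$ exactly. The paper's proof instead asserts $g(\widetilde{S})=g(S)$, which would give the stronger inequality $g(S)\le 3\Quad(S)$; but that stronger inequality is falsified by the paper's own Example~\ref{ex:nonor} with $q=1$ and $g=4$, so the paper's genus relation is off by one and your version is the one that should appear. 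Your extra care with Theorem~\ref{thm:admissible integer solution gives normal} and with vertex-linking components is unnecessary — the double $\partial N(S)$ is manifestly a normal surface realising $2x(S)$ — but it does no harm.
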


\qquad $\blacktriangleright$ Example~\ref{ex:nonor} shows that this bound is sharp with $q=1$ and $g=4.$


\begin{proof}
Since $M$ is orientable, doubling the normal surface coordinate of a non-orientable normal surface gives the coordinate of the orientable double cover of the surface. The (orientable) genus of the double cover will be equal to the (non-orientable) genus of the original surface, but the number of quadrilaterals will be double. Thus Theorem~\ref{thm:genus} implies the inequality.
\end{proof}

\begin{corollary}[Bound for normal surfaces with boundary]
	\label{cor:boundedSurfs}
	Let $M$ be a triangulated, compact, orientable 3--manifold, 
	and $S$ be a closed, connected, orientable normal surface in $M$ with $b>0$ boundary components.
Then	 $$ 2 g(S) +b \le 3 \Quad(S) + 1.$$
\end{corollary}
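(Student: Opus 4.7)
The plan is to reduce to Theorem~\ref{thm:genus} via the doubling trick. Let $DM = M \cup_{\partial M} M$ be the double of $M$, equipped with the doubled triangulation obtained by gluing two copies of $\tri$ along the identity on each boundary face. Since $M$ is compact and orientable, $DM$ is a closed, triangulated, orientable 3--manifold (the two copies of $M$ are given opposite orientations so the gluing is coherent along $\partial M$). Because $S$ is properly embedded and meets $\partial M$ transversally in a disjoint collection of normal arcs on $\partial M$, the surface $DS$ obtained by mirroring $S$ across $\partial S$ is itself a properly embedded surface in $DM$, and each of its pieces in a tetrahedron of $DM$ is either a normal triangle or a normal quadrilateral of the original kind. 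Hence $DS$ is a normal surface in $DM$.

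Next I would verify the three features of $DS$ needed to invoke Theorem~\ref{thm:genus}. Connectedness of $DS$ follows from connectedness of $S$ together with $\partial S \neq \emptyset$; orientability of $DS$ follows by orienting the two copies of $S$ compatibly across $\partial S$ (which is possible since $S$ is orientable with boundary). The quadrilateral count is additive: every quadrilateral of $DS$ lies in exactly one copy of $M$ inside $DM$ and is a quadrilateral of the corresponding copy of $S$, whence
\[
\Quad(DS) = 2\,\Quad(S).
\]
For the genus of the closed orientable surface $DS$, a direct Euler characteristic computation using $\chi(\partial S) = 0$ gives
\[
\chi(DS) = 2\chi(S) = 2(2 - 2g(S) - b) = 4 - 4g(S) - 2b,
\]
and therefore $g(DS) = 2g(S) + b - 1$.

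Finally, apply Theorem~\ref{thm:genus} to $DS$ in $DM$ to obtain
\[
3 \cdot 2\,\Quad(S) = 3\,\Quad(DS) \;\ge\; 2\,g(DS) = 2\bigl(2g(S) + b - 1\bigr),
\]
which rearranges to $2g(S) + b \le 3\,\Quad(S) + 1$, as claimed.

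The main point that needs care, and in my view the only real obstacle, is to check that the doubling construction is genuinely compatible with the normal surface structure: one must be sure that after gluing, no new kind of cell appears across $\partial M$ and that the quadrilateral coordinates of $DS$ are exactly the sum of those in the two copies. This reduces to the fact that, at each boundary face of $M$, the normal arcs of $S$ are identified with themselves under the mirror gluing, so triangles and quadrilaterals of $S$ on one side match triangles and quadrilaterals of the mirror copy on the other side, giving a bona fide normal surface in $DM$. Everything else is bookkeeping with Euler characteristic.
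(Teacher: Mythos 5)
Your proposal is correct and follows exactly the paper's argument: double $M$ along $\partial M$, observe that the double of $S$ is a normal surface in the induced triangulation with twice as many quadrilaterals, compute $g(DS)=2g(S)+b-1$, and apply Theorem~\ref{thm:genus}. The only difference is that you spell out the Euler characteristic bookkeeping and the verification that the double is a normal surface, which the paper leaves implicit.
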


\qquad $\blacktriangleright$ Example~\ref{ex:bounded} shows that this bound is sharp
	with $g=q=1$, and $b = 2$.

\begin{remark}
	Note that the number of boundary components $b$ does not need to be known in order to check 
	if equality in Corollary~\ref{cor:boundedSurfs} is satisfied: The genus of a bounded surface $S$ with $b$
	boundary components is given by $g(S) = 1 - (\chi (S) + b)/2 $.
	Hence, $2g(S) + b = 2 - \chi(S) - b + b$ and we have
	$$ 2 - \chi(S) \le 3 \Quad(S) + 1.$$
\end{remark}

\begin{proof}
Double $M$ along its boundary, notice that the double of $S$ is a normal surface in the induced triangulation with twice as many quadrilateral discs, and apply Theorem~\ref{thm:genus}.
\end{proof}

\begin{corollary}[Bound for Haken sum]
\label{cor:genus Haken sum}
	Let $M$ be a triangulated, compact, orientable 3--manifold, and $S$ be 
	a closed, connected, orientable normal surface in $M.$ Suppose that the disjoint union of $S$ and $m$ vertex linking spheres 
	is the Haken sum of $n$ closed, connected, orientable normal surfaces. Then 
	$$ 2 g(S)  \le 3 \Quad(S)+2(1-n+m) .$$
	\end{corollary}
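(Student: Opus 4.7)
The plan is to reduce the inequality to Theorem~\ref{thm:genus} applied to each summand, using the additivity of normal coordinates and of Euler characteristic under Haken sum. Let $S_1,\ldots,S_n$ denote the normal summands, so that $S\sqcup (m \text{ vertex linking spheres}) = S_1 + \cdots + S_n$ as normal surfaces.

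First, I would record two additivity facts. Since normal coordinates are additive under Haken sum, and vertex linking spheres contribute no quadrilateral discs, one obtains
$$q(S) = \sum_{i=1}^n q(S_i).$$
The Euler characteristic is likewise additive under Haken sum, because Haken sum is defined by cut-and-paste along transverse intersection curves which contribute trivially to $\chi$. Thus
$$\chi(S) + 2m = \sum_{i=1}^n \chi(S_i).$$

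Next, I would apply Theorem~\ref{thm:genus} to each $S_i$. Since each $S_i$ is closed, connected and orientable, $3q(S_i) \ge 2g(S_i) = 2 - \chi(S_i)$, so $\chi(S_i) \ge 2 - 3q(S_i)$. Summing over $i$ and substituting into the Euler characteristic identity gives
$$\chi(S) = \sum_{i=1}^n \chi(S_i) - 2m \ge 2n - 3\sum_{i=1}^n q(S_i) - 2m = 2n - 3q(S) - 2m.$$
Rewriting $\chi(S) = 2 - 2g(S)$ and rearranging yields exactly $2g(S) \le 3q(S) + 2(1 - n + m)$, as desired.

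The only subtlety worth checking carefully is the claim that $\chi$ is additive under Haken sum on closed orientable normal surfaces; this is standard but deserves a sentence of justification, since the regular exchange along intersection arcs/circles preserves the total Euler characteristic. Everything else is bookkeeping, so no genuine obstacle arises; the proof is a clean additivity argument on top of Theorem~\ref{thm:genus}.
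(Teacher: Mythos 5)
Your proof is correct and takes essentially the same route as the paper's: use additivity of Euler characteristic (and of quadrilateral counts) under Haken sum, apply Theorem~\ref{thm:genus} to each summand $F_k$, and rearrange. The paper compresses this into two sentences, but the argument is identical.
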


\qquad $\blacktriangleright$ Example~\ref{ex:hakenSum} shows that this bound is sharp for non-trivial Haken sums.

\begin{proof}
By hypothesis, we have a Haken sum of the form $S+ \sum m_jV_j = \sum n_kF_k,$ where $\sum m_j = m$ and $\sum n_k = n.$ Linearity of Euler characteristic for Haken sums and Theorem~\ref{thm:genus} applied to each $F_k$ yields the result.
\end{proof}


\subsection{A second proof and its applications}
\label{subsect:second proof}

\begin{proof}[Second proof of Theorem~\ref{thm:genus}]
Suppose $M$ is a triangulated, compact, orientable 3--manifold, and $S$ is a closed, connected, orientable normal surface in $M$. We give each $M$ and $S$ an orientation. This determines a \emph{transverse orientation} of $S,$ and hence of all normal discs and arcs in $S$ (see Figure~\ref{fig:TONS}) as well as their pre-images in $\widetilde{\Delta}.$ Before we use this extra structure to study $S,$ we recall the following notions from \cite{TONS}.

	\begin{figure}[h]
		\begin{center}
			\includegraphics[width=\textwidth]{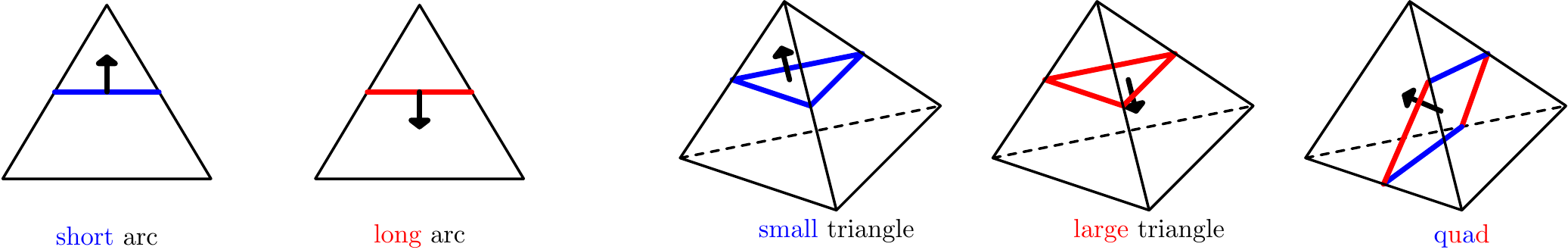}
		\end{center}
		\caption{Transversely oriented normal arcs and discs. \label{fig:TONS}} 
	\end{figure}

Let $\sigma$ be a 2--simplex and $\alpha \subset\sigma$ be a transversely oriented normal arc. The transverse orientation can be viewed as a function, which maps one component of $\sigma \setminus \alpha$ to $+1$ and the other to $-1.$ We say that the maximal subcomplex of $\sigma$ contained in the component of $\sigma \setminus \alpha$ having positive sign is \emph{dual} to $\alpha.$ This subcomplex is either a 0--simplex, in which case we call $\alpha$ a \emph{short arc}; or a 1--simplex, in which case $\alpha$ is a \emph{long arc}. See Figure~\ref{fig:TONS}.

The transverse orientation on each normal disc in a 3--simplex induces a transverse orientation of the normal arcs in its boundary. Each triangle will either have all three edges dual to the same vertex or all three edges dual to different edges. In the first case, we will say that the triangles are \emph{small}. In the second case, we will say that they are \emph{large}. Two opposite edges of each quadrilateral will be long, and will be dual to the same edge of the 3--simplex. We will say that the quadrilateral is \emph{dual to} this edge. The other two edges will be short. 

The notions of short and long edges descend from $\widetilde{\Delta}$ to the triangulation of $M$ since they are preserved by the face pairings. In particular, 
the definition of short and long edges is not relative to the polygon that it is contained in. So if two quadrilaterals have an edge in common, then this is either a short edge of both quadrilaterals, or a long edge of both quadrilaterals. (Note that the notions of short/long and small/large are interchanged by changing the orientation of $S.$ Using the long edges for the construction is motivated by the applications.)

We will use these properties of quadrilateral edges to define an equivalence relation on the set of all long edges in the quadrilateral subcomplex of $S$. Call the long edges $e$ and $f$ of the quadrilaterals $P$ and $Q$ respectively equivalent if there is a chain of quadrilaterals $P = Q_0, \ldots, Q_k = Q$ with the property that successive quadrilaterals are identified along long edges. In particular, the two long edges of one quadrilateral are equivalent.

	\begin{figure}[h]
		\begin{center}
			\includegraphics[width=.4\textwidth]{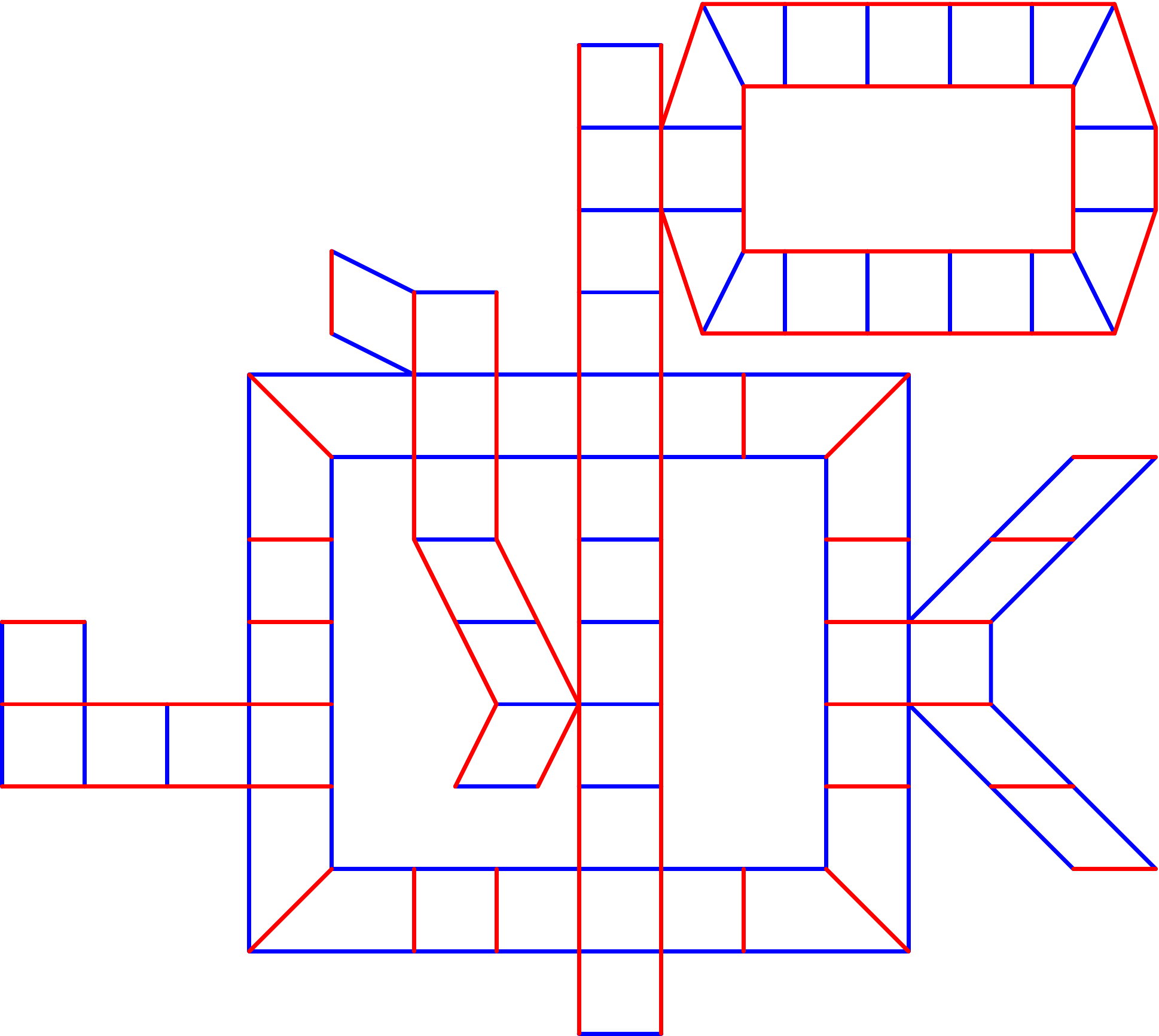}
		\end{center}
		\caption{Equivalence classes of long quadrilaterals and vertical short edge paths. \label{fig:long and short edge classes}} 
	\end{figure}

We will again define a new cell structure on $S.$ As above, we pinch the boundary components of each non-simply connected triangle region together by shrinking edges in the region that connect disconnected boundary components of each triangle region. As before, the interior of each resulting region will be simply connected. Denote the resulting surface $\widetilde{S},$ and note that it is homeomorphic to $S,$ and that a spine of $\widetilde{S}$ is again contained in the union of all quadrilateral edges. 

But we can restrict the locus of the spine even further: Consider the graph $\Gamma$ on $\widetilde{S},$ consisting of the union of all short edges and precisely one long edge from each equivalence class of long edges. We claim that the complement of $\Gamma$ in $\widetilde{S}$ is a union of pairwise disjoint open discs. Consider a vertical path of quadrilateral discs. If this closes up on itself (in which case the quadrilaterals form an annulus linking a common edge in $M$), then our construction yields an open disc formed by cutting the open annulus along one of the long edges. If this does not close up on itself, then the extremal quadrilaterals connect to pinched simply-connected triangle regions (possibly the same). Cutting the union of this simply-connected region with the chain of quadrilaterals along any long edge in the chain results in one or two open discs, and hence again results in simply connected regions. One can now iterate this procedure over all vertical paths of quadrilateral discs.

Since the complement in $\widetilde{S}$ of $\Gamma$ is a collection of (open) discs, a spine for $\widetilde{S}$ can be chosen in $\Gamma.$ Since each quadrilateral meets $\Gamma$ in at most two short edges and at most one long edge, the spine has at most $2q+q=3q$ edges.
\end{proof}

\begin{corollary}[Bound for incompressible normal surfaces]\label{cor:genus of essential}
Let $M$ be a triangulated, compact, orientable 3--manifold,  and $S$ be a closed, connected, orientable normal surface in $M.$ If $S$ is incompressible, then $$2 g(S)\le \Quad(S).$$
\end{corollary}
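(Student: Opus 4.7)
The plan is to refine the second proof of Theorem~\ref{thm:genus}. In that proof, a graph $\Gamma$ on $\widetilde{S}$ was built containing all short edges of the quadrilateral subcomplex together with one representative long edge per equivalence class, and the complement $\widetilde{S}\setminus \Gamma$ was shown to be a disjoint union of open discs. I propose to work instead with the subgraph $\Gamma_L\subset\Gamma$ obtained by discarding all short edges and retaining only the representative long edges. Since each equivalence class contains at least one quadrilateral, the number of classes satisfies $c\le q(S)$, so $|E(\Gamma_L)|\le q(S)$. If one can show that $\widetilde{S}\setminus \Gamma_L$ is still a disjoint union of open discs, then $\Gamma_L$ carries a spine of $\widetilde{S}$ and the inclusion $\Gamma_L\hookrightarrow\widetilde{S}$ is surjective on fundamental group. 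Combined with the universal bound $b_1(G)\le|E(G)|$ for any finite graph $G$, this yields $2g(S)=b_1(\widetilde{S})\le b_1(\Gamma_L)\le|E(\Gamma_L)|\le q(S)$, the desired inequality.

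To establish that $\widetilde{S}\setminus \Gamma_L$ is a disjoint union of open discs, I would analyse the passage from $\widetilde{S}\setminus \Gamma$ to $\widetilde{S}\setminus \Gamma_L$, which adjoins the short edges (together with their endpoints) to the complement. Encode the combinatorics in a multigraph $H$: the vertices of $H$ are the open discs of $\widetilde{S}\setminus \Gamma$ (explicitly described in the second proof as opened cyclic chains, path-chain segments attached to adjacent triangle regions, or isolated triangle regions), and the edges of $H$ are the short edges, each joining the two discs that lie on its two sides (these may coincide). A standard Euler-characteristic argument shows that a component of $H$ with $v$ vertices and $\epsilon$ edges produces a component of $\widetilde{S}\setminus \Gamma_L$ of Euler characteristic $v-\epsilon$, and this component is an open disc if and only if the component of $H$ is a tree. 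Thus the key claim reduces to showing that the multigraph $H$ is a forest.

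The main obstacle is to use the incompressibility of $S$ to rule out cycles in $H$. Any cycle in $H$ would produce an annular (or higher-genus) component of $\widetilde{S}\setminus \Gamma_L$ whose core is an essential simple loop $\gamma\subset\widetilde{S}$. My plan is to construct from such a hypothetical $\gamma$ an embedded disc in $M$ with boundary $\gamma$: trace $\gamma$ through the short edges of the cycle and the explicit discs of $\widetilde{S}\setminus \Gamma$; since each short edge lies in a tetrahedron face near a vertex of the triangulation and each disc of $\widetilde{S}\setminus \Gamma$ is a planar region in $S$, one can assemble a compatible collection of pushoffs into an embedded compressing disc in $M$ bounded by $\gamma$. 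Incompressibility then forces $\gamma$ to bound a disc in $S$, contradicting its essentiality in the annular component. Executing this local-to-global disc construction carefully is the technical heart of the argument, and is analogous to (but more delicate than) the standard recognition that annular quadrilateral chains bound meridian discs of edges in $M$.
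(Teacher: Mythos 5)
Your overall strategy is the same as the paper's: refine the second proof of Theorem~\ref{thm:genus} by arguing that after discarding short edges, the graph $\Gamma_L$ of representative long edges still carries a spine of $\widetilde{S}$, and use incompressibility (via the observation that loops of short quadrilateral edges bound discs in $M$, as in Figure~\ref{fig:homotope short edge loop}) to justify the discard. But there is a genuine gap at the pivotal step. You claim that a cycle in the multigraph $H$ produces a non-disc component $C$ of $\widetilde{S}\setminus\Gamma_L$ whose core $\gamma$ is essential \emph{in $\widetilde{S}$}, and then derive a contradiction from incompressibility because $\gamma$ must bound a disc on $S$, ``contradicting its essentiality in the annular component.'' That is not a contradiction: $\gamma$ can be essential in the subsurface $C$ while being inessential in $\widetilde{S}$. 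Concretely, if $\gamma$ bounds a disc $D\subset\widetilde{S}$ then $D$ is not contained in $C$ (otherwise $\gamma$ would be inessential in $C$), so $D$ contains points of $\Gamma_L$, i.e.\ long edges; $C$ is then a non-disc component of $\widetilde{S}\setminus\Gamma_L$ even though every short-edge loop you produce bounds a disc on $S$. In that situation $H$ is \emph{not} a forest and your chain of inequalities does not go through, yet incompressibility gives you nothing to contradict.

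The paper sidesteps exactly this issue by working iteratively rather than all at once: one vertical short edge path at a time is contracted, and whenever a loop $\gamma_k$ appears, it is pulled back to a loop of short quadrilateral edges on $S$, which bounds a disc on $S$ by incompressibility, so $\gamma_k$ can be contracted without changing the homeomorphism type. Because each contraction genuinely collapses part of the complex (including any long edges trapped inside the disc being contracted), the process terminates with a spine built from at most $\Quad(S)$ long-edge images. Your single-shot reformulation via $H$ could be repaired in the same spirit --- when a cycle core $\gamma$ turns out to be inessential in $\widetilde{S}$, contract the disc it bounds (which only decreases $|E(\Gamma_L)|$) and restart --- but as written the argument stops one step short, and the claim ``$H$ is a forest'' is not what incompressibility directly delivers.

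One smaller point worth flagging: your loop $\gamma$ is the core of an annular component of $\widetilde{S}\setminus\Gamma_L$, not a priori a loop lying in the short-edge subgraph. To run the ``push into a vertex link'' construction you still need to homotope $\gamma$ into a loop of short quadrilateral edges; this is possible (the discs of $\widetilde{S}\setminus\Gamma$ deformation retract appropriately), but it deserves an explicit sentence, since the normal-homotopy-to-vertex-link mechanism in the paper is stated for short-edge loops specifically.
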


\qquad $\blacktriangleright$ This bound is sharp for all $g \geq 1$ for manifolds with boundary (see \S\ref{subsec:minimal triangulations of FxI} and Example~\ref{ex:incompressible}).

\qquad $\blacktriangleright$ An incompressible torus with 2 quads in a closed 3--manifold  is also given in 
 Example~\ref{ex:incompressible}.

\qquad $\blacktriangleright$ The contrapositive certifies compressibility of many of our examples in \S\ref{sec:Examples}.

\begin{proof}
Suppose $S$ is an incompressible, closed, connected, orientable normal surface in $M.$ We may assume that $g(S)\ge 1.$ In particular, $S$ contains at least one quadrilateral disc. We will modify the second proof of Theorem~\ref{thm:genus} and make some additional observations.

First suppose that there is a triangle region, say $R,$ in $S,$ which is not simply connected. Since every simple closed loop in $\partial R$ bounds a disc in $M,$ it also bounds a disc on $S.$ We may choose a simple closed curve $b \subset \partial R$ with the property that the closed disc $D\subset S$ with $\partial D = b$ contains $R.$ In particular, the graph $\Gamma$ can be chosen such that $\Gamma \cap D \subset b;$
the complement of $D$ contains quadrilateral discs; and the chain of quadrilateral edges corresponding to $b$ can be contracted to a point in $S$ (though we will not do this at this stage).

Recall that the long edges $e$ and $f$ of the quadrilaterals $P$ and $Q$ respectively are equivalent if there is a chain of quadrilaterals $P = Q_0, \ldots, Q_k = Q$ with the property that successive quadrilaterals are identified along long edges. The chain of quadrilaterals $P = Q_0, \ldots, Q_k = Q$ identifies successive short edges, and we will term a maximal chain of such short edges a \emph{vertical short edge path}. 

\begin{figure}[h]
		\begin{center}
			\includegraphics[width=.6\textwidth]{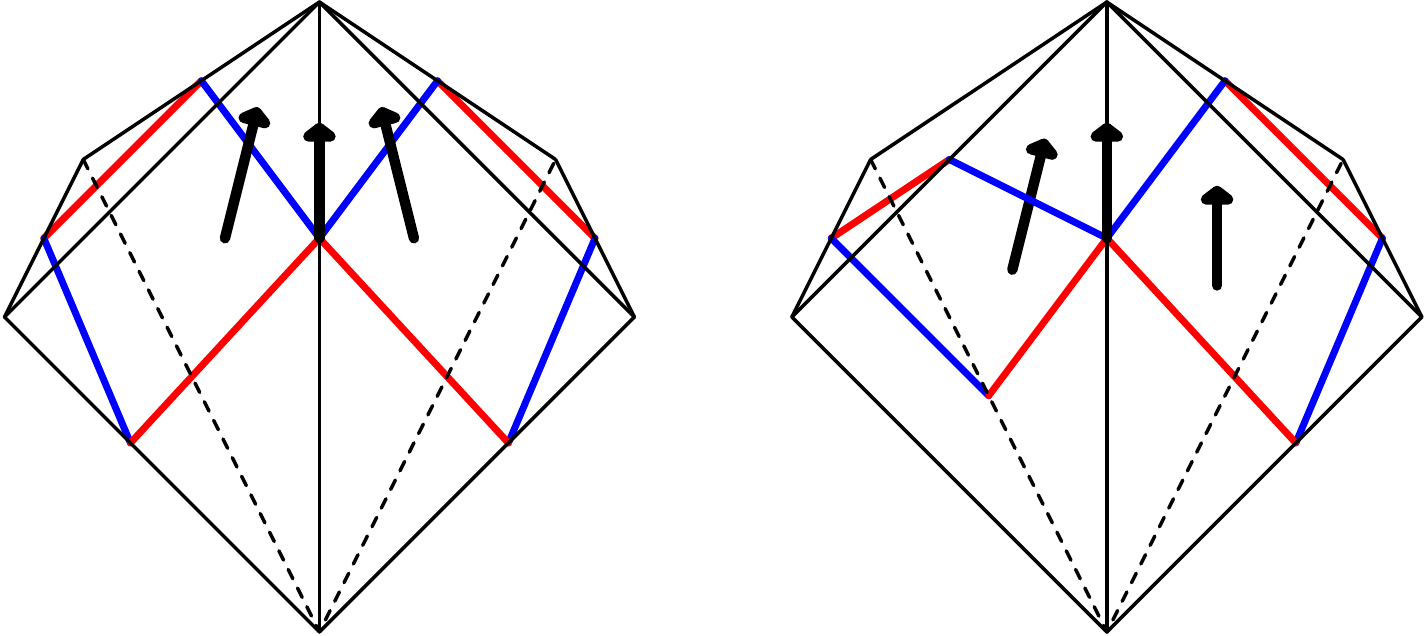}
		\end{center}
		\caption{Loop of short quadrilateral edges can be homotoped into a vertex link \label{fig:homotope short edge loop}} 
	\end{figure}

We now claim that the vertical short edge paths in $\Gamma$ can, one by one, be contracted to points, hence showing that the spine contained in $\Gamma$ arises from at most $\Quad(S)$ long edges. In the iterative process, we still maintain the terms \emph{short quadrilateral edges}, \emph{long quadrilateral edges} and \emph{vertical short edge path} for the images of these objects, even though after a contraction, some quadrilaterals have turned into triangles or bigons, and we denote the surface resulting after $k$ iterations by $S_k$. Suppose that a vertical short edge path in $S_k$ contains a loop $\gamma_k.$ We may assume without loss of generality that $\gamma_k$ is simple. Then the original surface $S$ contains a loop $\tilde{\gamma}_k$ made up of short quadrilateral edges and that maps to $\gamma_k,$ since we have only pinched short edges in the boundary of quadrilateral discs. This loop is normally homotopic into a vertex link (see Figure~\ref{fig:homotope short edge loop}) and hence bounds a disc on $S.$ This disc maps to a disc on $S_k$ with boundary $\gamma_k,$ and so $\gamma_k$ can be contracted to a point. It follows that after all vertical short edge path have been iteratively collapsed, we have a surface homeomorphic to $S$ and with a spine made up of the images of at most $\Quad(S)$ long edges.
\end{proof}

The argument in the above proof can be applied more generally, but pinching a vertical short edge path may then result in compressions of the surface. One can still obtain useful bounds if one has conditions that ensure that the genus of any compression is bounded from below; we illustrate this in two situations.

\begin{corollary}[Bound for the splitting surface of a product]\label{cor:genus of boundary homologous}
Let $M = F \times I,$ where $F$ is a closed, connected, orientable surface, with a triangulation. Suppose $S$ is a closed, connected, orientable normal surface in $M,$ which separates the two boundary components of $M.$ Then $$2 g(F)\le \Quad(S).$$
\end{corollary}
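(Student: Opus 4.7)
The strategy is to adapt the argument of Corollary~\ref{cor:genus of essential}, substituting for incompressibility the following key fact: every closed, connected, orientable surface embedded in $F\times I$ that separates the two boundary components has genus at least $g(F)$.  This can be deduced from the Thurston norm of the generator of $H_2(F\times I,\partial(F\times I))$, which equals $\max(0,\,2g(F)-2)$, or equivalently by maximally compressing such a surface in the irreducible manifold $F\times I$ (for $g(F)\geq 1$) to arrive at an incompressible separating surface isotopic to $F\times\{1/2\}$.

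Following the second proof of Theorem~\ref{thm:genus}, I construct $\widetilde S$ by pinching the non-simply-connected triangle regions, and form the graph $\Gamma$ made up of all short quadrilateral edges together with one long edge from each long-edge equivalence class, so that $\widetilde S\setminus \Gamma$ is a disjoint union of open discs.  I then collapse every vertical short-edge path in $\widetilde S$ to a single point, producing a CW complex $X$ whose $1$-skeleton consists only of the images of at most $\Quad(S)$ long edges; in particular $H_1(X)$ has rank at most $\Quad(S)$.

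For the matching lower bound, I pick a maximal linearly independent subset (in $H_1(\widetilde S)\cong H_1(S)$) among the homology classes of those collapsed vertical short-edge loops that are essential and non-separating on $S$; let $k$ denote its size.  Since inessential and separating collapsed loops are null-homologous on $\widetilde S$, the kernel of $H_1(\widetilde S)\to H_1(X)$ is spanned by the $k$ chosen classes, and so $H_1(X)$ has rank at least $2g(S)-k$.  Each essential loop is null-homotopic in $M$ via the vertex-link argument of Figure~\ref{fig:homotope short edge loop}; after innermost-disc replacement in the irreducible $F\times I$, these $k$ loops bound pairwise disjoint embedded compressing discs for $S$ in $M$.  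Simultaneous compression along them yields a closed, connected, orientable surface of genus $g(S)-k$ that still separates the two boundary components of $M$ (compression preserves the relative homology class of $S$).  The key fact above then gives $g(S)-k\geq g(F)$, and chaining the estimates yields
\[
\Quad(S)\;\geq\;2g(S)-k\;\geq\;g(S)+g(F)\;\geq\;2g(F).
\]

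The main technical obstacles I anticipate are (i) replacing the discs obtained from the vertex-link argument by pairwise disjoint embedded compressing discs, which is handled by a standard general-position / innermost-disc argument in the irreducible $F\times I$, and (ii) verifying that simultaneous compression along $k$ homologically independent disjoint non-separating curves on $S$ gives a connected surface of genus $g(S)-k$, which is classical in surface topology.  The degenerate case $g(F)=0$ is immediate from $\Quad(S)\geq 0$.
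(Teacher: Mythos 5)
Your approach correctly isolates the same key topological input that the paper uses: a closed, connected, orientable surface that separates the two boundary components of $F\times I$ has genus at least $g(F)$, and this survives any sequence of compressions. Your homological reformulation of the collapse — forming the quotient $X$ of $\widetilde S$ by the vertical short-edge paths and reading off $q\ge b_1(X)\ge 2g(S)-k$ from the long exact sequence of the pair — is a genuinely different (bulk rather than iterative) route to the upper bound on the spine, and that part of the argument is sound.

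The gap is in the compression step, specifically in the assertion that ``after innermost-disc replacement\ldots these $k$ loops bound pairwise disjoint embedded compressing discs.'' The $k$ loops are cycles in the vertical short-edge paths of $\widetilde S$; after the triangle regions have been pinched, these paths are graphs sharing vertices (the images of the pinched regions), so the chosen loops need not be simple and need not be pairwise disjoint on $S$. Innermost-disc replacement in the irreducible $F\times I$ disjoins disc \emph{interiors} once their boundaries are already disjoint; it does not disjoin the boundary curves, and any surgery performed on the loops themselves to make them simple and disjoint will in general change their classes in $H_1(S)$, breaking the link with the rank $k$ that the first half of your argument pinned down. Without that, ``simultaneous compression'' producing a connected separating surface of genus exactly $g(S)-k$ is not justified. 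The paper's proof is built to avoid exactly this difficulty: it never compresses along more than one loop at a time, always takes a single simple subloop of a single vertical short-edge path, and then re-runs the construction on the resulting surface together with the image of the graph $\Gamma$, so that no simultaneous disjointness is ever needed and the final inequality $2g(F)\le 2g(S_n)\le |\Gamma|\le q$ comes out directly.
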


\qquad $\blacktriangleright$ The canonical splitting surfaces in the minimal
triangulations of $F \times I$ given in 
Section~\ref{subsec:minimal triangulations of FxI} show that this bound
is sharp for all values of $g \geq 1$.

\begin{proof}
The argument of the previous proof only needed the fact that a spine for the surface can be chosen outside of regions on the surface which are bounded by curves made up of short quadrilateral edges. For any oriented normal surface $S$ in $M,$ a simple closed curve made up of short quadrilateral edges in $S$ is homotopic into a vertex link and hence bounds a disc $D$ in $M$ with boundary on $S.$ If $D$ also bounds a disc on $S,$ then a spine for $S$ can be chosen disjoint from $D.$ Otherwise, $D$ is a compression disc, and $S\setminus \partial D$  contains a spine for each component of the surface obtained by compressing $S$ along $D.$ Also note that any further compression disc for a component $S'$ of the surface arising from the compression can be chosen disjoint from the disc on $S'$ parallel to $D.$ 

If $M = F \times I$ and $S$ is a surface separating the two boundary components of $M,$ it follows that if one compresses $S$ along \emph{any} sequence of compression discs, the resulting surface has a component that is incompressible, separates the boundary components of $M,$ and has genus at least the genus of $F.$ Let $\Gamma_0$ be a spine for $S_0=S$ contained in the union of all short edges and one edge from each equivalence class of long edges. In the previous proof, vertical short edge paths in $\Gamma_0$ were, one by one, contracted to points. This is now adjusted as follows. Let $\gamma_0$ be a vertical short edge path in $\Gamma_0.$ If this bounds a disc on $S,$ then it is contracted to a point, giving a surface $S_1$ and we denote $\Gamma_1$ the image of $\Gamma_0.$ Otherwise, we may assume that a simple closed loop in the vertical short edge path $\gamma_0$ is the boundary of a compression disc for $S_0.$ We then cut $S_0$ along this loop to obtain a surface with two boundary components, and contract each of the boundary components to a point. If this process results in a connected surface, we denote it $S_1.$ Otherwise we denote $S_1$ a component that separates the two boundary components of $M,$ and $\Gamma_1$ the image of $\Gamma_0$ in this component. We can now, as before, iterate this procedure using the induced cell decomposition. The final surface will have no vertical edge paths left, and hence the final spine $\Gamma$ will consist of at most one edge from each equivalence class of long edges and the genus is bounded below by the genus of $F$. Whence $2 g(F) \le |\Gamma| \le \Quad(S).$
\end{proof}

\begin{corollary}[Bound for Thurston norm]\label{cor:bound for Thurston norm}
Let $M$ be a triangulated, compact, orientable 3--manifold,  and $S$ be a closed, oriented normal surface in $M.$ Then $$||\, [S]\, || \le \Quad(S),$$ where the left hand side is the Thurston norm of the homology class represented by $S.$
\end{corollary}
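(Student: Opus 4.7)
The plan is to adapt the compression argument from the proof of Corollary~\ref{cor:genus of boundary homologous} to the general oriented setting. The crucial observation is that compressing $S$ along any embedded disc $D \subset M$ with $\partial D \subset S$ preserves the homology class $[S] \in H_2(M;\mathbb{Z})$, since $D$ is null-homologous in $M$.

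First I would set up exactly as in the second proof of Theorem~\ref{thm:genus}: use the orientations of $M$ and $S$ to induce a transverse orientation distinguishing short and long quadrilateral edges, pinch the non-simply-connected triangle regions to form $\widetilde{S}$, and let $\Gamma \subset \widetilde{S}$ be the graph made up of all short quadrilateral edges together with one representative long edge from each equivalence class. Then I would iteratively collapse the vertical short edge paths in $\Gamma$ exactly as in the proof of Corollary~\ref{cor:genus of boundary homologous}: any simple closed loop $\gamma$ in such a path is normally homotopic into a vertex link and hence bounds an embedded disc $D \subset M$; if $D$ lies on $S$, collapse $\gamma$ to a point, preserving the homeomorphism type; otherwise $D$ is a compression disc, along which I compress $S$ and then contract each of the two resulting boundary circles. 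In either case the new surface still represents $[S]$. Iterating until no vertical short edge paths remain yields a surface $S''$ with $[S''] = [S]$ and with the image $\Gamma''$ of $\Gamma$ in $S''$ consisting only of (images of) long edges and having disc complement in $S''$.

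To conclude, the graph $\Gamma''$ contains a spine for every component of $S''$ and has at most $\Quad(S)$ edges, one per equivalence class of long edges. Writing the components of $S''$ as closed orientable surfaces of genera $g_1, \ldots, g_r$, every component with $g_i \ge 1$ contributes at least $2g_i$ edges to the spine, hence
\[
  \sum_{g_i \ge 1} 2 g_i \;\le\; |\Gamma''| \;\le\; \Quad(S).
\]
Since $\chi_-(S'') = \sum_i \max(0, 2g_i - 2) \le \sum_{g_i \ge 1} 2 g_i$ and $||\,[S]\,|| \le \chi_-(S'')$ because $S''$ represents $[S]$, the desired bound follows.

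The main delicate point will be to verify that each compression step can be carried out coherently with the cell structure, so that after every step the complement of the evolving graph in the evolving surface remains a disjoint union of open discs and the graph still contains a spine for every component; this mirrors the analogous step in Corollary~\ref{cor:genus of boundary homologous}, but one must be attentive to the fact that compressions can disconnect or alter the topology of components while still representing $[S]$.
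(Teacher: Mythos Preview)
Your proposal is correct and follows essentially the same route as the paper: adapt the iterative short-edge-path collapse of Corollary~\ref{cor:genus of boundary homologous}, but keep every component after each compression (since compressions preserve $[S]$), so that the final surface still represents $[S]$ and has a spine built from at most $\Quad(S)$ long edges. The paper's write-up differs only cosmetically---it first deletes sphere and torus components before the final chain of inequalities, whereas you absorb them via $\max(0,2g_i-2)\le 2g_i$---and it treats the disconnected case separately at the end, but the substance is identical.
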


\begin{proof}
Assuming that $S$ is connected, we make the following adjustment to the previous proof. Instead of discarding components, we keep each component after each compression and terminate when in each component all vertical edge paths have been collapsed. We then delete all components that are spheres or tori. If the resulting surface is empty, then  $||\, [S]\, ||=0$ and there is nothing to prove. Otherwise, denote $S_k$ the resulting oriented surfaces and $\Gamma_k$ their spines consisting of long edges. Then
$$\Quad(S) \ge \sum |\Gamma_k| \ge  \sum 2g(S_k) \ge \sum (2- \chi(S_k)) \ge \sum  - \chi(S_k) \ge ||\,[\cup S_k]\,||=||\,[S]\,||.$$
This completes the proof for the case where $S$ is connected. If $S$ is not connected, we obtain the result by summing the first inequality over all components---the remaining inequalities then apply as above.
\end{proof}

\begin{corollary}[Bound in terms of quads and chains]\label{cor:quad and chain bound}
Let $M$ be a triangulated, compact, orientable 3--manifold, and $S$ a closed, connected, orientable, normal surface in $M.$ If every chain of quadrilateral edges in the boundary of a non-simply connected triangle region in $S$ contains at least $n$ edges, then 
$$2 g(S) \le \Bigg(1 + \frac{4}{n}\Bigg) \Quad(S),$$
where we set $n=\infty$ if there is no non-simply connected triangle region.
\end{corollary}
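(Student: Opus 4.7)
The plan is to refine the argument of Corollary~\ref{cor:genus of essential} by carefully tracking which short-edge loops resist contraction. I would begin by constructing, as in the second proof of Theorem~\ref{thm:genus}, the graph $\Gamma \subset \widetilde{S}$ consisting of all short quadrilateral edges together with one chosen long edge per equivalence class; the complement of $\Gamma$ in $\widetilde{S}$ is a union of open discs, so a spine of $\widetilde S$ is carried by $\Gamma$. I would then attempt, exactly as in the incompressible case, to iteratively contract vertical short edge paths to points: any simple closed loop appearing in such a path is normally homotopic into a vertex link and hence bounds a disc in $M$, and the contraction is legitimate whenever this disc pushes to a disc on $S$.

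The central topological observation is that a short-edge loop which does not bound a disc on $S$ must lie on the boundary of a non-simply connected triangle region. Indeed, if the loop were instead a boundary component of a simply connected triangle region $R$, it would bound $R$ as a disc on $S$ and could be contracted. Consequently, after performing all legitimate contractions and then tree-contracting within each surviving stubborn loop, only two classes of edges will remain: the long-edge representatives from $\Gamma$ (at most $\Quad(S)$, one per equivalence class) and at most one edge per stubborn boundary-chain loop of a non-simply connected triangle region.

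The combinatorial heart of the argument is to count the stubborn boundary chains. Each quadrilateral has at most two short edges that meet a triangle region, contributing at most $2\Quad(S)$ short edges in total, and each open chain of quadrilaterals has at most two end long edges that meet a triangle region, contributing at most another $2\Quad(S)$. Altogether, at most $4\Quad(S)$ quadrilateral edges can lie on the boundary of any triangle region. By the hypothesis, each chain on the boundary of a non-simply connected triangle region contains at least $n$ edges, so the number of such chains is at most $4\Quad(S)/n$. Combining with the long-edge contribution, the final spine has at most $\Quad(S) + 4\Quad(S)/n$ edges, and since any spine of $S$ carries at least $2g(S)$ edges, the inequality $2g(S) \le (1+4/n)\,\Quad(S)$ follows.

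The main obstacle will be the bookkeeping in the counting step: I expect the subtlety to lie in verifying that short edges and end long edges really do exhaust the quadrilateral edges that appear on a triangle region boundary, and in handling the potential overlap between a chosen long-edge representative and an edge of a stubborn boundary chain. This overlap can either be avoided by choosing each representative in the interior of its chain whenever possible, or else absorbed into the upper bound without loss.
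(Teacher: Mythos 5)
Your scheme follows the same contours as the paper's proof --- modify $S$, carry a spine in $\Gamma$, contract short edges, bound the residual by one edge per surviving boundary chain, and count at most $4\Quad(S)/n$ such chains --- and the final inequality matches. But the step you call the ``central topological observation'' has a genuine gap.

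You claim that a short-edge loop that does not bound a disc on $S$ must lie on the boundary of a non-simply-connected triangle region, and justify this only by noting that a boundary component of a \emph{simply} connected region does bound a disc. That is a false dichotomy: a short edge of a quadrilateral may be glued to a short edge of another quadrilateral rather than to a small triangle, so a loop of short quadrilateral edges need not lie on the boundary of any triangle region at all. For instance, the one-quadrilateral torus of Example~\ref{ex:quadEx1} has a single short edge forming a non-bounding loop even though there are no triangle regions whatsoever. Excluding such loops is exactly what the incompressibility hypothesis buys in Corollary~\ref{cor:genus of essential}; here they must be accounted for, not assumed away. The paper does this by a different setup that your argument smooths over: it pinches only the \emph{small} triangle regions, inserts explicit \emph{long cut edges} into each large non-simply-connected region (and counts these separately against the boundary chains of the large regions), and then observes that the initial pinching identifies only corners lying on distinct boundary chains of small regions --- the bookkeeping that ties every surviving short-edge identification to a boundary chain. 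By pinching all regions uniformly, as in the second proof of Theorem~\ref{thm:genus}, you also collapse long triangle--triangle edges inside large regions, thereby introducing vertex identifications that your residual count never tracks. I would revert to the paper's asymmetric treatment of small and large regions; without it the residual-loop accounting does not go through as written.
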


\qquad $\blacktriangleright$ The incompressible surfaces in Example~\ref{ex:incompressible} show that this bound is sharp for $n=\infty$.

\begin{remark}
The corollary can also be applied with $n$ denoting the minimal number of edges in \emph{any} chain of quadrilateral edges. Note that one needs at least $n=3$ to obtain an improvement on the general bound, and that the above bound is not sharp, as the number of non-simply connected triangle regions has not been taken into account.
\end{remark}

\begin{proof}
We modify the previous proofs as follows. We only pinch the small triangle regions to give simply connected components. In the large triangle regions, we add edges connecting the boundary components of a non-simply connected region. For each non-simply connected region, this adds one less than the total number of boundary components, and we term these edges \emph{long cut edges}. A spine for the surface is then contained in the union of all short quadrilateral edges, one edge from each equivalence class of long quadrilateral edges and the long cut edges. We would again like to pinch the portion of every vertical short edge path, which is contained in the spine, to a point. We can do this successively. If a short edge has two distinct end-points, it can be pinched to a point. If it has identical end-points then there is a loop (on $S$) of short edges. This must correspond to a boundary component of a non-simply connected small triangle region since the initial pinching of small triangle regions only identifies corners of quadrilaterals contained on distinct chains of short quadrilateral edges. This shows that all short edges in the spine can be contracted to points except for at most as many as there are boundary components of small triangle regions. Whence the spine can be chosen to have at most $q + |\partial|$ edges, where  $|\partial|$ is the total number of boundary components of non-simply connected triangle regions. By hypothesis, $n  |\partial| \le 4q,$ giving the desired inequality.
\end{proof}

\begin{corollary}[Bound for normal surfaces in simplicial manifolds]\label{cor:simplicial}
Let $M$ be a triangulated, compact, orientable 3--manifold, and $S$ be a closed, connected, orientable normal surface in $M.$ If 
the triangulation of $M$ is simplicial, then
$$6 g(S) \le 7 \Quad(S).$$
\end{corollary}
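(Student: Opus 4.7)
The plan is to derive the bound as an immediate application of Corollary~\ref{cor:quad and chain bound}. Specifically, I will show that in a simplicial triangulation every chain of quadrilateral edges in the boundary of a non-simply connected triangle region in $S$ contains at least $n=3$ edges; plugging this into that corollary yields
$$
2\, g(S) \;\le\; \Bigl(1 + \tfrac{4}{3}\Bigr)\Quad(S) \;=\; \tfrac{7}{3}\,\Quad(S),
$$
which rearranges to $6\, g(S) \le 7\, \Quad(S),$ as claimed.

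The key structural observation is the following. Suppose the triangle region $R$ links the vertex $v$ of $M.$ Every edge $e$ in a chain associated to $R$ is a normal arc inside some $2$-simplex $F$ of $M$ that cuts off $v,$ and so the two endpoints of $e$ lie on the two $1$-simplices of $F$ incident to $v.$ Because the triangulation is simplicial, these two $1$-simplices are distinct $1$-simplices of $M,$ and vertices of $S$ sitting on distinct $1$-simplices of $M$ are themselves distinct points of $S.$ Hence the two endpoints of any chain edge are distinct vertices of $S.$

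Using this, I rule out chains of length~$1$ and~$2.$ A length-$1$ chain would be a single edge closing up on itself in $S,$ contradicting the distinctness of its endpoints. A length-$2$ chain consists of two distinct edges $e_1, e_2$ sharing both endpoints in $S,$ i.e.\ a bigon; the endpoint identification forces the endpoints of $e_1$ and $e_2$ to lie on the same pair of $1$-simplices of $M$ incident to $v.$ In a simplicial triangulation there is at most one $2$-simplex spanned by a given pair of edges meeting at a common vertex, so $e_1$ and $e_2$ must lie in the same face $F,$ where they are parallel normal arcs cutting off $v.$ But such parallel arcs meet each of the two edges of $F$ incident to $v$ in distinct points, so $e_1$ and $e_2$ together have four distinct endpoints in $S,$ contradicting the bigon assumption. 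Therefore every chain has at least three edges.

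The main subtlety I expect is justifying that each chain really is a closed trail in the $1$-skeleton of $S$ with no repeated edges, so that length~$2$ genuinely forces a bigon even when $R$ has pinch points. Since each boundary edge of $R$ has a triangle of $R$ on one side and a quadrilateral on the other, it is visited at most once by the boundary of a compact core of $\mathrm{int}(R);$ this is enough for the length-$1$ and length-$2$ analyses above to go through and for Corollary~\ref{cor:quad and chain bound} to deliver the stated inequality.
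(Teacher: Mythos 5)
Your proof is correct and follows the paper's approach exactly: apply Corollary~\ref{cor:quad and chain bound} with $n=3$. The paper states this application in a single sentence, and your argument supplies the verification that the paper leaves implicit, namely that in a simplicial triangulation loops and bigons of quadrilateral edges cannot occur (because a $2$-simplex has three distinct edges so chain-edge endpoints are distinct, and two distinct $2$-simplices cannot share a pair of edges, so a would-be bigon collapses to two parallel arcs in one face with four distinct endpoints).
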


\begin{proof}
We may apply Corollary~\ref{cor:quad and chain bound} with $n= 3.$
\end{proof}

\begin{remark}
The bound in Corollary~\ref{cor:simplicial} is not known to be sharp. Examples of normal surfaces $S$ can 
be constructed with $q(S) = 3g(S) + O(\sqrt{g(S)})$ quadrilaterals (see Example~\ref{ex:simplicial} for a discussion of these examples).
All these examples are well within the conjectured bound of $q(S) \geq 3g(S)$ from \cite{Spreer10NormSurfsCombSlic}.
\end{remark}

\begin{corollary}[Bound for normal surface in minimal, prime manifold]\label{cor:minimal}
Let $M$ be a triangulated, compact, orientable, prime 3--manifold, and $S$ be a closed, connected, orientable normal surface in $M.$ If the triangulation of $M$ is minimal, then
$$6 g(S) \le 7 \Quad(S).$$
\end{corollary}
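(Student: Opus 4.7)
The plan is to invoke Corollary~\ref{cor:quad and chain bound} with $n=3$, exactly as in the proof of Corollary~\ref{cor:simplicial}. The only task is therefore to verify that in a minimal triangulation of a compact, orientable, prime $3$--manifold, every chain of quadrilateral edges in the boundary of a non-simply connected triangle region has length at least $3$. Once this is done, the inequality $2g(S)\le(1+\tfrac{4}{3})\Quad(S)$ rearranges immediately to $6g(S)\le 7\Quad(S)$.

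First I would analyse what a chain of length $1$ or $2$ means in terms of the triangulation of $M$. Each quadrilateral edge lies inside a $2$--face of the triangulation, so a chain of length $1$ (a single quadrilateral edge whose two ends are identified in $S$) forces the underlying edge of the triangulation of $M$ to be a loop in the $1$--skeleton, glued to itself in a specific pattern; a chain of length $2$ forces a pair of edges of the $2$--skeleton of $M$ to bound a bigon. In both cases the hypothesis that the triangle region bounded by the chain is non-simply connected produces an annular (or more complicated) region of $S$ whose boundary runs along this very short loop, and this topology must be compatible with the edge identifications forced upon the triangulation.

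The main obstacle is to rule out such short chains in a minimal triangulation of a prime $3$--manifold. This is where I would invoke the structural theory of efficient and minimal triangulations due to Jaco and Rubinstein: a minimal triangulation of a compact, orientable, prime $3$--manifold is $0$--efficient with the standard exceptions of $S^3$, $\mathbb{RP}^3$ and $L(3,1)$, and such a triangulation admits no edge loops or bigon configurations in its $1$-- or $2$--skeleton of the kind required to produce a short chain bounding nontrivial planar topology. Concretely, the presence of such a configuration would either permit a Matveev/Pachner $2\to 0$ (or $3\to 2$) move reducing the number of tetrahedra, contradicting minimality, or would allow the construction of a non-vertex-linking normal sphere, contradicting $0$--efficiency. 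The three exceptional manifolds have minimal triangulations with only one or two tetrahedra; in each of these finitely many cases the bound $6g(S)\le 7\Quad(S)$ can be verified by direct inspection of the very short list of admissible normal surfaces.

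With short chains excluded in every prime case, Corollary~\ref{cor:quad and chain bound} applied with $n=3$ yields the stated inequality. I expect the real work to be concentrated entirely in the second step above, namely transcribing the combinatorial meaning of a short chain into an explicit forbidden configuration and matching it against the catalogue of configurations that Jaco--Rubinstein's theorems exclude in minimal triangulations of prime $3$--manifolds.
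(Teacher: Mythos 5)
Your plan matches the paper's proof almost exactly: both invoke Corollary~\ref{cor:quad and chain bound} with $n=3$ and then rule out length-$1$ and length-$2$ chains of quadrilateral edges by appealing to Jaco--Rubinstein $0$--efficiency theory plus minimality, with small cases handled by direct verification. Two details are worth flagging. First, the small cases you set aside are not just $S^3$, $\mathbb{RP}^3$, $L(3,1)$: the main argument needs at least three tetrahedra (to guarantee $0$--efficiency of the minimal triangulation, and so that ``cone face $\Rightarrow M=S^3$'' is a genuine contradiction), and there are other prime manifolds of complexity $\le 2$ (e.g.\ $S^2\times S^1$, $L(4,1)$, $L(5,2)$, $S^3/Q_8$) that must also be checked; the paper treats all complexity-$\le 2$ manifolds via Regina. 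Second, ``inspection of the very short list of admissible normal surfaces'' is not quite enough, since the admissible surfaces form an infinite set; the paper verifies the bound for the fundamental surfaces and then extends to an arbitrary connected surface via the Haken-sum argument of Corollary~\ref{cor:genus Haken sum}. Finally, your translation of a short chain into a forbidden configuration is left somewhat vague: the paper makes it precise by observing that a length-$1$ chain forces a cone face (so $M=S^3$ by Corollary~5.4 of Jaco--Rubinstein) and a length-$2$ chain forces two faces to form a cone (excluded by Burton's lemmas in a minimal triangulation). Your gesture toward Pachner moves and non-vertex-linking normal spheres points in the right direction but would need to be replaced by these specific citations to be complete.
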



\begin{proof}
The proof is divided into two cases. If the triangulation consists of one or two tetrahedra, then one can verify the conclusion, for instance, using Regina~\cite{Regina}, for all fundamental surfaces in the finite list of prime manifolds of complexity up to two. The case of a general connected surface in these manifolds then follows as in the proof of Corollary~\ref{cor:genus Haken sum}. 

Hence assume that there are at least 3 tetrahedra in the triangulation. We will show that work by Jaco-Rubinstein~\cite{JR} and Burton~\cite{BAB-2004, BAB-2007} allows us to apply Corollary~\ref{cor:quad and chain bound} with $n= 3.$ Since $M$ is prime and the minimal triangulation has at least 3 tetrahedra, it is $0$--efficient (see \cite{JR}). If a quadrilateral edge in the orientable surface $S$ forms a loop, then some face is a cone. Corollary~5.4 in \cite{JR} now implies $M=S^3,$ contradicting the fact that minimal triangulations of $S^3$ have one tetrahedron. If two short quadrilateral edges in $S$ form a bigon, then two faces in the triangulation form a cone (possibly with further self-identifications), and the triangulation is again not minimal due to \cite[Lemma~2.7 and Corollary~2.10]{BAB-2004}  and \cite[Lemma~3.6 and Corollary~3.8]{BAB-2007}.
\end{proof}

\begin{remark}
The same bound $6 g(S) \le 7 \Quad(S)$ applies to the face-generic, face-pair reduced triangulations of Luo-Tillmann~\cite{LT2013}.
\end{remark}


\subsection{Bounds in terms of the size of the triangulation}

Improving upon bounds of Hass, Lagarias and Pippenger~\cite{HLP} for vertex normal surfaces in simplicial triangulations, Burton and Ozlen~\cite{burton10-tree} showed that the maximal coordinate of a vertex normal surface in a semi-simplicial
triangulation of an orientable closed $3$-manifold is at most $(4n^2 + 2) (\sqrt{6})^n$, where
$n$ is the number of tetrahedra. The quadrilateral constraints imply that no vertex normal
surface in a closed orientable $3$-manifold triangulation can have more than $n (4n^2 + 2) (\sqrt{6})^n$
quadrilaterals. Theorem~\ref{thm:genus} and Corollary~\ref{cor:genus of essential} therefore have the following consequences.

\begin{corollary}
	Let $M$ be a triangulated, compact, orientable 3-manifold. Suppose the triangulation has $n$ tetrahedra and
	$S$ is a closed, orientable vertex normal surface $S$ in $M.$ Then
	\begin{equation}
		\label{eq:normal}
		g(S) \le (6n^2 + 3) (\sqrt{6})^n.
	\end{equation}
	If, in addition, $S$ is incompressible, then
	\begin{equation}
		\label{eq:incompressible}
		g(S) \le (2n^2 + 1) (\sqrt{6})^n.
	\end{equation}
\end{corollary}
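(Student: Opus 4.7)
The plan is to combine the Burton--Ozlen bound on the coordinates of vertex normal surfaces, quoted immediately above the statement, with Theorem~\ref{thm:genus} for inequality~\eqref{eq:normal} and with Corollary~\ref{cor:genus of essential} for inequality~\eqref{eq:incompressible}. The input from Burton--Ozlen is that each coordinate of a vertex normal surface in a semi-simplicial triangulation of a closed, orientable 3--manifold with $n$ tetrahedra is at most $(4n^2+2)(\sqrt{6})^n$.

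First I would translate this into a bound on the total number of quadrilateral discs. The quadrilateral constraints force at most one of the three quadrilateral coordinates in each tetrahedron to be nonzero, so summing the (at most $n$) nonzero quadrilateral coordinates, each bounded by $(4n^2+2)(\sqrt{6})^n$, yields the estimate $\Quad(S)\le n(4n^2+2)(\sqrt{6})^n$ already recorded in the excerpt.

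For the first inequality, I would apply Theorem~\ref{thm:genus} ($3\Quad(S)\ge 2g(S)$) directly to this bound; the factor $\tfrac{3}{2}$ combines with the quadrilateral estimate to give~\eqref{eq:normal}. For the second inequality, the additional assumption of incompressibility lets me invoke Corollary~\ref{cor:genus of essential} ($\Quad(S)\ge 2g(S)$) in place of Theorem~\ref{thm:genus}, replacing the factor $\tfrac{3}{2}$ by $\tfrac{1}{2}$ and so reducing the final constant by a factor of three, yielding~\eqref{eq:incompressible}.

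There is no genuine obstacle: the proof is a two--line concatenation of the Burton--Ozlen coordinate bound with the linear genus--quadrilateral inequalities proved earlier in this section, and all of the topological and combinatorial content has already been absorbed into those earlier results. The only thing to check is the arithmetic in each of the two applications.
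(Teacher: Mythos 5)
Your logical skeleton is identical to what the paper intends: the paper offers no proof beyond the one-line remark that the corollary is a consequence of the Burton--Ozlen coordinate bound together with Theorem~\ref{thm:genus} (for \eqref{eq:normal}) and Corollary~\ref{cor:genus of essential} (for \eqref{eq:incompressible}). However, the one thing you defer---"check the arithmetic"---is precisely the step that fails. Starting from $\Quad(S)\le n(4n^2+2)(\sqrt6)^n$, the inequality $2g(S)\le 3\Quad(S)$ gives
$$g(S)\ \le\ \tfrac{3}{2}\,n(4n^2+2)(\sqrt6)^n\ =\ (6n^3+3n)(\sqrt6)^n,$$
and $2g(S)\le \Quad(S)$ gives
$$g(S)\ \le\ \tfrac{1}{2}\,n(4n^2+2)(\sqrt6)^n\ =\ (2n^3+n)(\sqrt6)^n.$$
Both exceed the printed bounds $(6n^2+3)(\sqrt6)^n$ and $(2n^2+1)(\sqrt6)^n$ by a factor of $n$. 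So either the paper's displayed quadrilateral estimate $\Quad(S)\le n(4n^2+2)(\sqrt6)^n$ carries a spurious factor of $n$ and should read $\Quad(S)\le (4n^2+2)(\sqrt6)^n$ (in which case the printed constants do follow exactly as you describe), or the constants in the corollary should be cubic rather than quadratic in $n$. Your method is the right one, but you cannot simply assert that the factors $\tfrac32$ and $\tfrac12$ "combine to give" the stated inequalities: you need to carry out the multiplication and then either resolve the discrepancy in the Burton--Ozlen quotation or record the corrected exponent.
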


\begin{remark}
	Equation~(\ref{eq:normal}) also follows from an elementary counting argument:
	Let $S$ be an orientable vertex normal surface in $M$ with $v(S)$ vertices, then using the bounds on triangle and quadrilateral coordinates from \cite{burton10-tree} one obtains
	$$ g(S) \le (6n^2 + 3) (\sqrt{6})^n + \frac{2-v(S)}{2}. $$
	This equation can be improved further by giving a lower bound
	on $v(S)$. In contrast, equation~(\ref{eq:incompressible}) cannot be derived from \cite{burton10-tree} and combined with  \cite{JO} has the following immediate application.
\end{remark}

\begin{corollary}
	Let $M$ be a compact, orientable 3-manifold with complexity $c = c(M).$ Then the minimal genus $g$ of an incompressible, closed, orientable surface in $M$ satisfies 
$$
		g \le (2c^2 + 1) (\sqrt{6})^c.
$$
\end{corollary}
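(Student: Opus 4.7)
The plan is to combine equation~(\ref{eq:incompressible}) from the preceding corollary with the theorem of Jaco and Oertel~\cite{JO}, which asserts that whenever a compact orientable 3--manifold contains a closed, orientable, incompressible surface, it contains one that appears as a vertex solution to the matching equations in any given triangulation.

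First I would fix a minimal semi-simplicial triangulation $\tri$ of $M$; by definition of complexity this has exactly $c=c(M)$ tetrahedra. If $M$ contains no closed, orientable, incompressible surface, the statement is vacuous, so assume at least one such surface exists. By~\cite{JO}, there is then a closed, orientable, incompressible vertex normal surface $S$ in $\tri$. Applying the bound~(\ref{eq:incompressible}) to $S$ yields
$$ g(S)\le (2c^2+1)(\sqrt{6})^c. $$
Since $S$ is itself a closed, orientable, incompressible surface in $M$, the minimum genus $g$ among such surfaces satisfies $g\le g(S)$, which gives the claimed inequality.

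The only delicate point is verifying that~\cite{JO} furnishes a two-sided incompressible vertex normal surface of the kind to which the preceding corollary applies. The Jaco--Oertel theorem is classically stated for irreducible 3--manifolds, so if $M$ is reducible one should either appeal to a version of~\cite{JO} valid in the stated generality or reduce to a prime summand $M'$ containing an incompressible surface of minimum genus and note that $c(M')\le c(M)$. In either situation the right-hand side of the estimate is not affected, and the remainder of the argument is a direct substitution into~(\ref{eq:incompressible}).
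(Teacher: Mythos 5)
Your proposal is correct and follows essentially the same route as the paper, which presents the corollary as an ``immediate application'' of equation~(\ref{eq:incompressible}) combined with the Jaco--Oertel theorem~\cite{JO}. Your additional remark about the irreducibility hypothesis in~\cite{JO} (and the reduction to a prime summand using the fact that a closed incompressible surface can be isotoped off essential spheres, together with $c(M')\le c(M)$ via the crushing theory of~\cite{JR}) is a reasonable precaution that the paper leaves implicit.
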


\medskip
In \cite{Burton12CompTopNormSurfExperiments} there are examples of families of triangulations
containing normal surfaces with exponentially large normal coordinates. However, these normal
surfaces are discs and spheres.

%



\section{Minimal triangulations of $\mathbf{F}\times \mathbf{I}$}
\label{sec:examples}

We now determine the complexity and all minimal triangulations of manifolds of the form $F \times I$,  where $F$ is a closed, orientable surface and $I$ is a closed interval. The required results on minimal triangulations of manifolds with boundary in \S\ref{sec:main results} are of independent interest.


\subsection{Examples}
\label{sec:examples of inflations}

We begin by describing the construction of a fairly simple triangulations of $F \times I$.   Our triangulations come from the Jaco-Rubinstein inflation construction \cite{JR:Inflate} and are obtained by taking the cone over a minimal triangulation of a closed surface, then inflating at the ideal vertex created by the cone point.  We give a brief review of the inflation construction as needed for these examples.  Inflations of more general ideal triangulations and their inverse operation of crushing a triangulation along a normal surface are fully developed by Jaco-Rubinstein in \cite{JR:Inflate} and \cite{JR}, respectively.  

\subsubsection{Inflations of triangulations }
Suppose $\tri_g$ is a minimal triangulation of the closed, orientable surface $F_g$ of genus $g$ and let $\tri^*$ be the cone on $\tri_g$ with cone point $v^*$. An inflation of $\tri^*$ at $v^*$ is a triangulation $\tri$ of $F_g\times I$. The triangulation $\tri$ is very closely related to $\tri^*$; in particular, the inflation $\tri$ is a minimal vertex triangulation of $F\times I$ (has all its vertices in the boundary and only one vertex in each boundary component) and can be crushed along a component of its boundary \cite{JR:Inflate} giving back the triangulation $\tri^*$.  

The collection of all normal triangles at the vertex $v^*$ in the tetrahedra of $\tri^*$  form a normal surface (made up only of triangles) called the vertex-linking surface at $v^*$; let $S^*$ denote this vertex-linking surface. The surface $S^*$ has an induced triangulation, say $\mathcal{S}^*$, isomorphic to $\tri_g$; hence, $\mathcal{S}^*$ is a minimal triangulation of the vertex-linking surface $S^*$.  The minimal triangulation $\mathcal{S}^*$ of $S^*$ can be viewed as a triangulation of a $4g$--gon in the plane, obtained without adding vertices, and with its boundary edges identified in pairs.  The inflation construction starts with the selection of a minimal spine in the one-skeleton of the triangulation  $\mathcal{S}^*$; we call such a minimal spine a {\it frame}. In the current situation the collection of boundary edges in any $4g$--gon representation of $\mathcal{S}^*$ gives rise to a frame, say $\lambda$; such a frame $\lambda$ in $\mathcal{S}^*$ has one vertex and $2g$ edges. See Figure~\ref{f-local-pic-edge}.

An inflation of $\tri^*$ is guided by such a frame $\lambda$ in the triangulation $\mathcal{S}^*$ of the vertex-linking surface $S^*$. Each edge of $\lambda$ is a normal arc in a face of $\tri^*$ and corresponds to the intersection of that face with the vertex-linking surface $S^*$; see Figure \ref{f-local-pic-face}. Each vertex in $\lambda$ corresponds to the intersection of an edge of $\tri^*$ with the vertex-linking surface $S^*$. Figure \ref{f-local-pic-edge} shows examples of possible frames in $\mathcal{S}^*_g$ for $g=1$ and $g=2$, along with their intersection with a small neighbour of the vertex of the frame in the vertex-linking surface $S^*$.  The frames are  indicated in the figure by bold edges.         
\begin{figure}[htbp]
         \begin{center}
\includegraphics[width=2.5in]{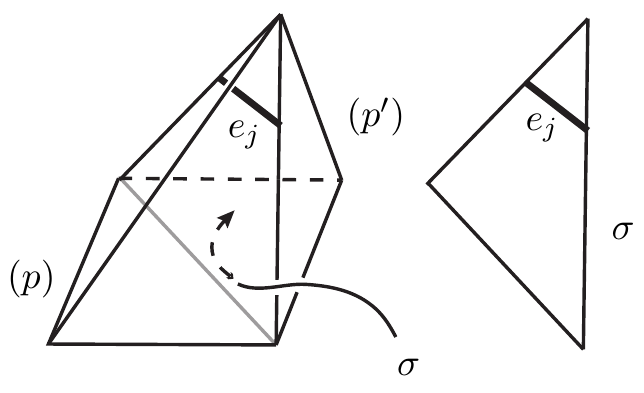}
\caption{The intersection of the frame with a face $\sigma$ of $\tri^*$ is a single edge of the frame; shown here as the bold line.} 
\label{f-local-pic-face}
\end{center}
\end{figure}

\begin{figure}[t]
  \begin{center}
    \subfigure[Local picture in $\tri_1$]{
      \includegraphics{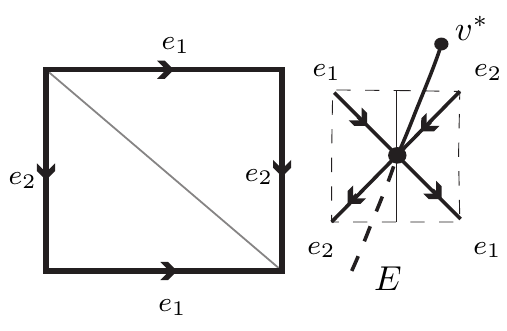}}
    \qquad
    \subfigure[Local picture in  $\tri_2$]{
      \includegraphics{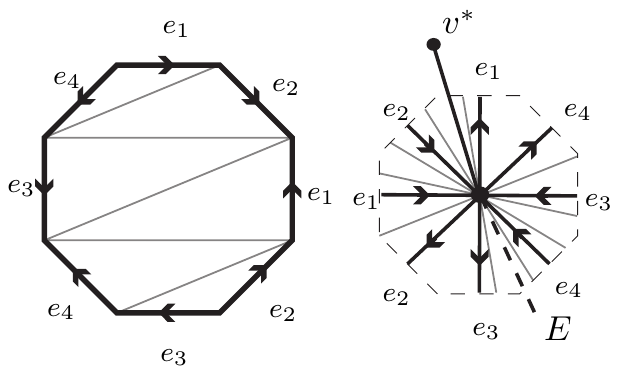}}
  \end{center}
\caption{The local view of edges of a frame at the vertex where the vertex-linking surface $S^*$  meets the edge $E$ of $\tri^*$. The bold edges are edges of the frame $\lambda$ and the lighter edges are non frame edges of the triangulation $\mathcal{S}^*$ of $S^*$, coming from the triangulations $\tri_1$ and $\tri_2$, respectively.}
\label{f-local-pic-edge}
\end{figure}

{\bf Inflation at a face.}  Each edge in a frame accounts for the addition of a tetrahedron to the ideal triangulation $\tri^*$ by the construction we call ``an inflation at a face" of $\tri^*$.  This construction comes with a prescription for undoing face identifications of $\tri^*$ and introducing new face identifications between faces of tetrahedra in $\tri^*$ and faces of the added tetrahedra; see Figure \ref{f-blow-up-face1}. At this step some of the faces of the added tetrahedra have not been assigned face identifications. In Figure \ref{f-blow-up-face1}(B) these are the faces $(e_j)(012)$ and $(e_j)(013)$; this is resolved by a construction at each vertex of the frame, which we refer to as ``an inflation at an edge of $\tri^*$." The new edge $(e_j)(01)$ will be an edge in the triangulation $\tri$ that is in the boundary of $F_g\times I$; the edge  $(p)(cb)\leftrightarrow (e_j)(23)\leftrightarrow (p')(c'b')$ is an edge of the given triangulation $\tri_g$ of $F_g$.

\begin{figure}[t]
  \begin{center}
    \subfigure[$(p)(abc)\leftrightarrow (p')(a'b'c')$]{
      \includegraphics[height=3.5cm]{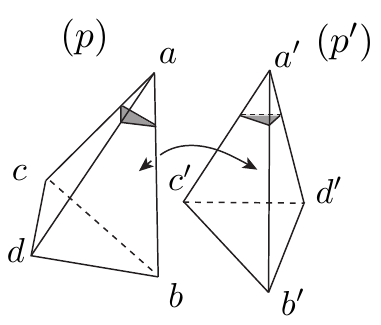}}
    \qquad
    \subfigure[$(p)(abc)\leftrightarrow(e_j)(032)$ \newline and $(e_j)(132)\leftrightarrow(p')(a'b'c')$]{
      \includegraphics[height=3.5cm]{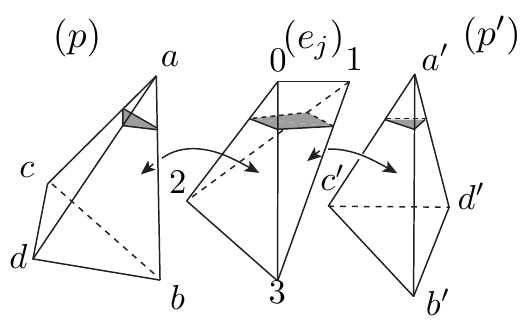}}
  \end{center}
  \caption{The edge $e_j$ of the frame $\lambda$ lies in the face common to tetrahedra $(p)$ and $(p')$. An inflation at the face $\sigma$ opens up the identification between $(p)$ and $(p')$ and adds a new tetrahedron $(e_j)$ along with induced face identifications  between $(p)$ and $(e_j)$ and $(e_j)$ and $(p')$.} 
\label{f-blow-up-face1}
\end{figure}

{\bf Inflation at an edge.} In general, an inflation at an edge can take on a combination from three possibilities denoted in \cite{JR:Inflate} as {\it generic, crossing}, or {\it branch}. However, in our very simple situation in this work, there is only one vertex in the frame $\lambda$ and at that vertex we have a branch of index $4g$ leading to precisely $4g$ unidentified faces of added tetrahedra. We complete these face identifications by adding a cone over a $4g$--gon, $P^*$ in Figure \ref{f-branch-config};  the unidentified faces of the added tetrahedra, following the ``inflation at a face,"  are identified with the $4g$ triangular faces in the cone $P^*$. The identification of  the unidentified faces of the added tetrahedron $(e_j)$ are determined by the orientation of the edge $e_j $ in $S^*$.
To complete the triangulation we can subdivide the cone in numerous ways using $4g-2$ tetrahedra.  
\begin{figure}[htbp]
 \begin{center}
\includegraphics[width=14cm]{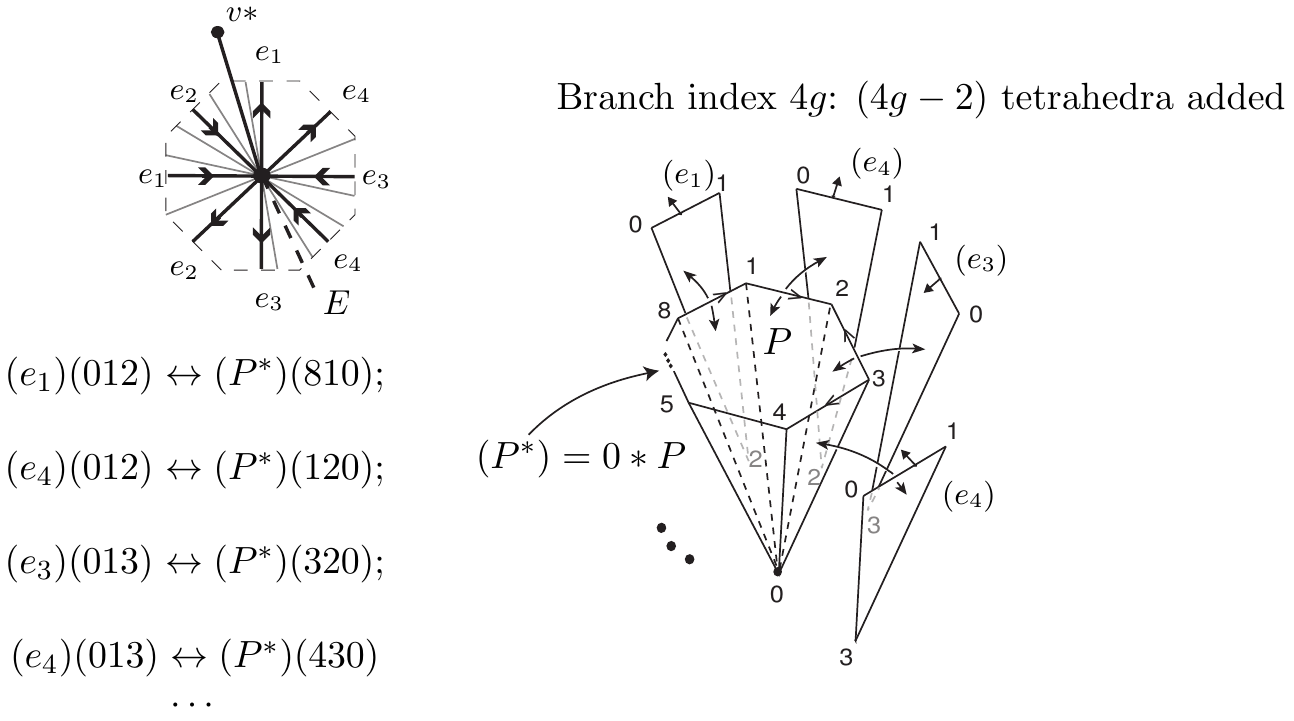} 
\end{center}
\caption{The edge $E$ meets the frame in one point, an index $4g$ branch point ($g\ge 1$). The cone over a planar $4g$-gon is added and then is subdivided into $(4g-2)$ tetrahedra, without adding vertices.} \label{f-branch-config}

\end{figure}

{\bf Complexity of an inflation.} The complexity of an inflation is defined in \cite{JR:Inflate}; the complexity is determined by the frame and, in general, involves the number of edges, the number of crossings, and the index of each branch point. However, the results from \cite{JR:Inflate} applied here give the complexity of our frame, written $C(\lambda)$, as $$C(\lambda) = e(\lambda) + 2(b(\lambda) - 1),$$ where $e(\lambda)$ is the number of edges of the frame $\lambda$ and $b(\lambda)$ is the index of the branch point of $\lambda$. It follows from \cite{JR:Inflate} that if $|\tri^*|$ denotes the complexity of the ideal triangulation $\tri^*$, then the complexity of the inflation  $\tri$ of $\tri^*$ is $$|\tri| = |\tri^*| + C(\lambda).$$  In particular, for $\tri^*$ the cone over a minimal triangulation of a compact, orientable surface of genus $g\ge 1$, $|\tri^*| = 4g-2$ and  $C(\lambda) = 2g+2(2g-1)$.  As a special case of Theorem 4.3 and Theorem 4.4 of \cite{JR:Inflate}, we have the following theorem.

\begin{theorem}\label{triang:inflation} Suppose $\tri_g$ is a minimal triangulation of the closed, orientable surface $F_g, g\ge 1$, and $\tri^*$ is the ideal triangulation formed by taking the cone over $\tri_g$ with ideal vertex, $v^*$. Let $\tri$ be an inflation of $\tri^*$. Then the underlying point set of $\tri$ is homeomorphic to $F_g\times I$ and $\tri$ has complexity $|\tri| = 10g-4.$
\end{theorem}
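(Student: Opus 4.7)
The plan is to separate the theorem into its two assertions---that the underlying space of $\tri$ is homeomorphic to $F_g\times I$, and that $\tri$ has exactly $10g-4$ tetrahedra---and handle each in turn. The first is a direct specialisation of the general inflation theorems of \cite{JR:Inflate}, while the second is a routine enumeration of the tetrahedra contributed at each stage of the inflation construction recalled in \S\ref{sec:examples of inflations}.

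For the homeomorphism type, I would appeal directly to Theorems~4.3 and 4.4 of \cite{JR:Inflate}: any inflation of an ideal triangulation at an ideal vertex $v^*$ with vertex link $S^*$ produces a triangulation of the manifold obtained from $|\tri^*|\setminus\{v^*\}$ by attaching a collar $S^*\times[0,1]$. In our setting $\tri^*$ is the cone on a one-vertex triangulation $\tri_g$ of $F_g$, so $|\tri^*|$ is a topological cone on $F_g$; removing the cone point $v^*$ and attaching a collar $F_g\times[0,1]$ produces precisely $F_g\times I$.

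For the complexity, I would carry out the count stage by stage, using the formula $|\tri|=|\tri^*|+C(\lambda)$ and the decomposition $C(\lambda)=2g+2(2g-1)$ recalled in the excerpt. A standard Euler characteristic computation (using $v=1$ and $3f=2e$) shows that a one-vertex triangulation $\tri_g$ of $F_g$ with $g\ge 1$ has $f=4g-2$ triangles, and that any triangulation of $F_g$ with more vertices has strictly more triangles, so the one-vertex triangulations are minimal. Coning yields $|\tri^*|=4g-2$. The frame $\lambda$ derived from any $4g$-gon representation of $\mathcal{S}^*$ has $e(\lambda)=2g$ edges meeting at a single vertex; each face inflation contributes one tetrahedron per edge of $\lambda$, for $2g$ tetrahedra in all, and the branch-point inflation completes the construction by triangulating the cone on a $4g$-gon without adding interior vertices, which uses $4g-2=2(2g-1)$ tetrahedra. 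Therefore
\[
|\tri|=(4g-2)+2g+2(2g-1)=10g-4.
\]

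The main obstacle, and the only substantive verification required, is confirming the tetrahedron count at the branch-point inflation. One needs the elementary fact that any triangulation of a convex $n$-gon without additional vertices uses exactly $n-2$ triangles, which transfers to the cone over the $4g$-gon. Secondarily, one must check that the combinatorial data $e(\lambda)=2g$ is independent of the particular $4g$-gon representation of $\mathcal{S}^*$ chosen, so that the complexity $|\tri|$ is well defined regardless of the incidental combinatorial choices made during inflation.
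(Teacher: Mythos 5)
Your proposal is correct and follows essentially the same strategy as the paper: the homeomorphism type is read off from Theorems~4.3 and 4.4 of \cite{JR:Inflate}, and the complexity is obtained by adding up the $4g-2$ tetrahedra of the cone $\tri^*$, the $2g$ tetrahedra inserted by the face inflations (one per edge of the frame $\lambda$), and the $4g-2$ tetrahedra obtained by triangulating the cone over the $4g$--gon at the single branch point. Your supporting observations---that a one--vertex triangulation of $F_g$ has $4g-2$ triangles by Euler characteristic, that such triangulations are minimal, and that a cone over an $n$--gon triangulated without interior vertices uses $n-2$ tetrahedra---are exactly the facts the paper implicitly invokes, and your direct step-by-step count bypasses any ambiguity in interpreting the index $b(\lambda)$ appearing in the complexity formula $C(\lambda)=e(\lambda)+2(b(\lambda)-1)$ from \cite{JR:Inflate}.
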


\subsubsection{Examples of Inflations}\label{sec:inflationexamples}

Here we give examples by carrying out the construction of Theorem \ref{triang:inflation}; these examples provide very straight forward examples of inflations of ideal triangulations and are shown later in this section to provide models for the minimal triangulations of the family of 3--manifolds $M= F_g\times I,$ where $F_g$ is the closed orientable surface of genus $g\ge 1$. We also provide a five tetrahedron (minimal) triangulation of $S^2 \times I$ via an inflation of the cone over a minimal triangulation of the 2-sphere, $S^2$. 
 
$\mathbf{(g=1)}$  In Figure \ref{F1 times I} we give an inflation of the ideal triangulation determined by taking the cone over a two-tetrahedron triangulation of $S^1\times S^1$. By following the sequence of steps we start with a minimal triangulation of the 2--torus, cone this triangulation getting an ideal triangulation of $S^1\times S^1\times [0,1)$ with ideal vertex $v^*$. We choose a frame $\lambda = <e_1>\cup<e_2>$ in the vertex-linking surface $\mathcal{S}^*$. The next step is to inflate in the edges of $\lambda$, adding the tetrahedra $(e_1)$ and $(e_2)$. We then inflate at the edge $E$ adding a cone on the 4--gon for the branch point of order 4. By Theorem~\ref{triang:inflation}, we have a  triangulation of $F_1 \times I$. Note the complexity of the frame is $C(\lambda) = 4$, hence, the 6 tetrahedron triangulation.

\begin{figure}[htbp]
 \begin{center}
\includegraphics[width=14cm]{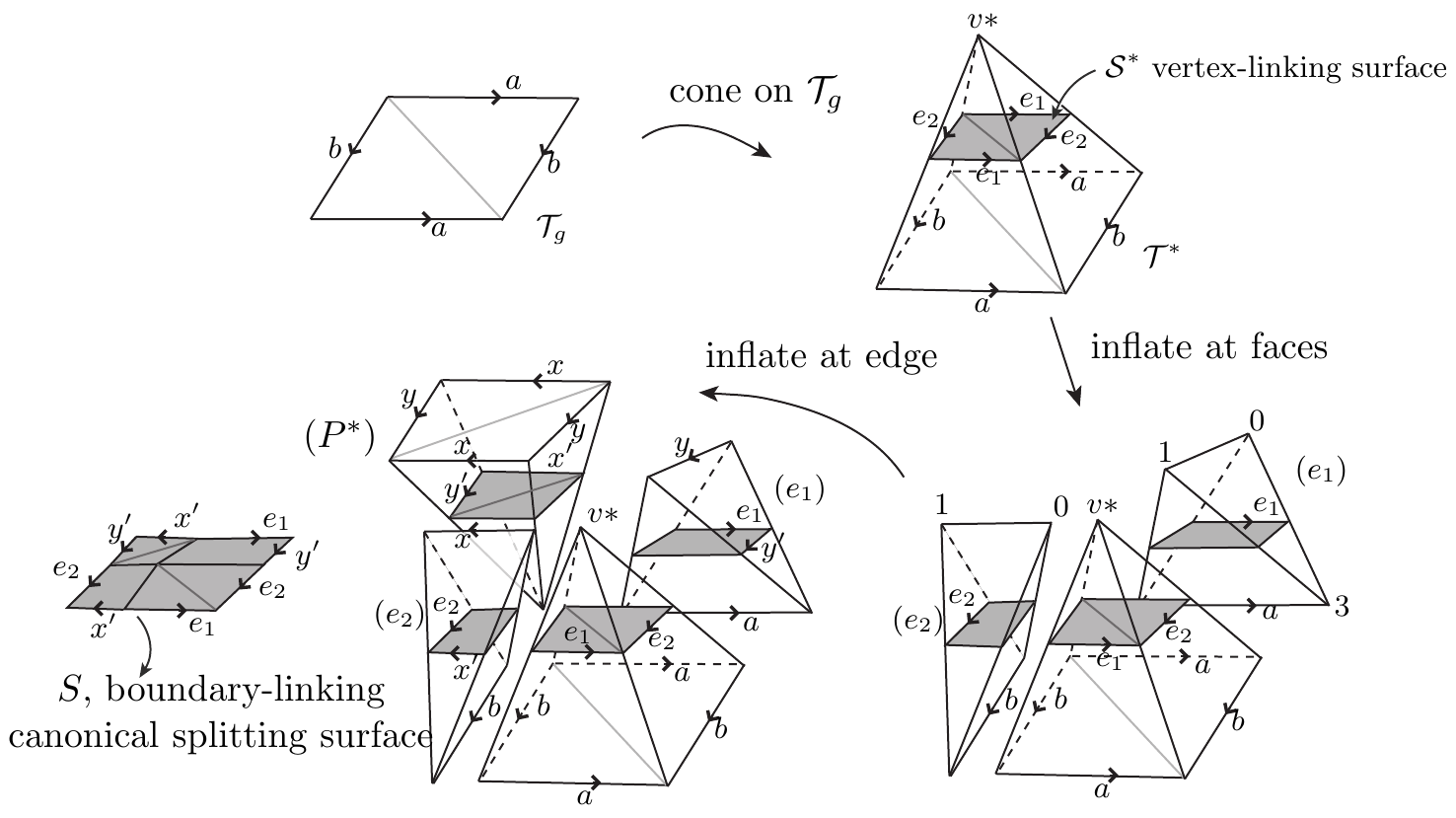} 
\end{center}
\caption{Triangulation of $F_g\times I$: An inflation of the cone on a minimal triangulation of $F_1 = S^1\times S^1$ at the cone point $v^*$, resulting in a 6 tetrahedra triangulation of $F_1\times I$.  The boundary-linking, canonical splitting surface $S$ is the inflation of the vertex-linking surface $\mathcal{S}^*$} \label{F1 times I}

\end{figure}

$\mathbf{(g\ge 2)}$  If $F$ has genus at least 2, the procedure is as above for the torus. In Figure \ref{fig:genustwo} we give an informative way to visualize an inflation of an idea triangulation by constructing the induced ``inflation" of the vertex-linking surface $S^*$.  The edges of the frame inflate to normal quadrilaterals in the tetrahedra added by an inflation at a face. The vertex of the frame inflates to $4g-2$ normal triangles in the $4g-2$ tetrahedra added with an inflation at an edge. This inflation of the vertex-linking surface $S^*$ results in a normal cell decomposition of a boundary-linking and canonical splitting surface, $S$, in $\tri$ of $F_g\times I$.

\begin{figure}
 \begin{center}
\includegraphics[width=8.5cm]{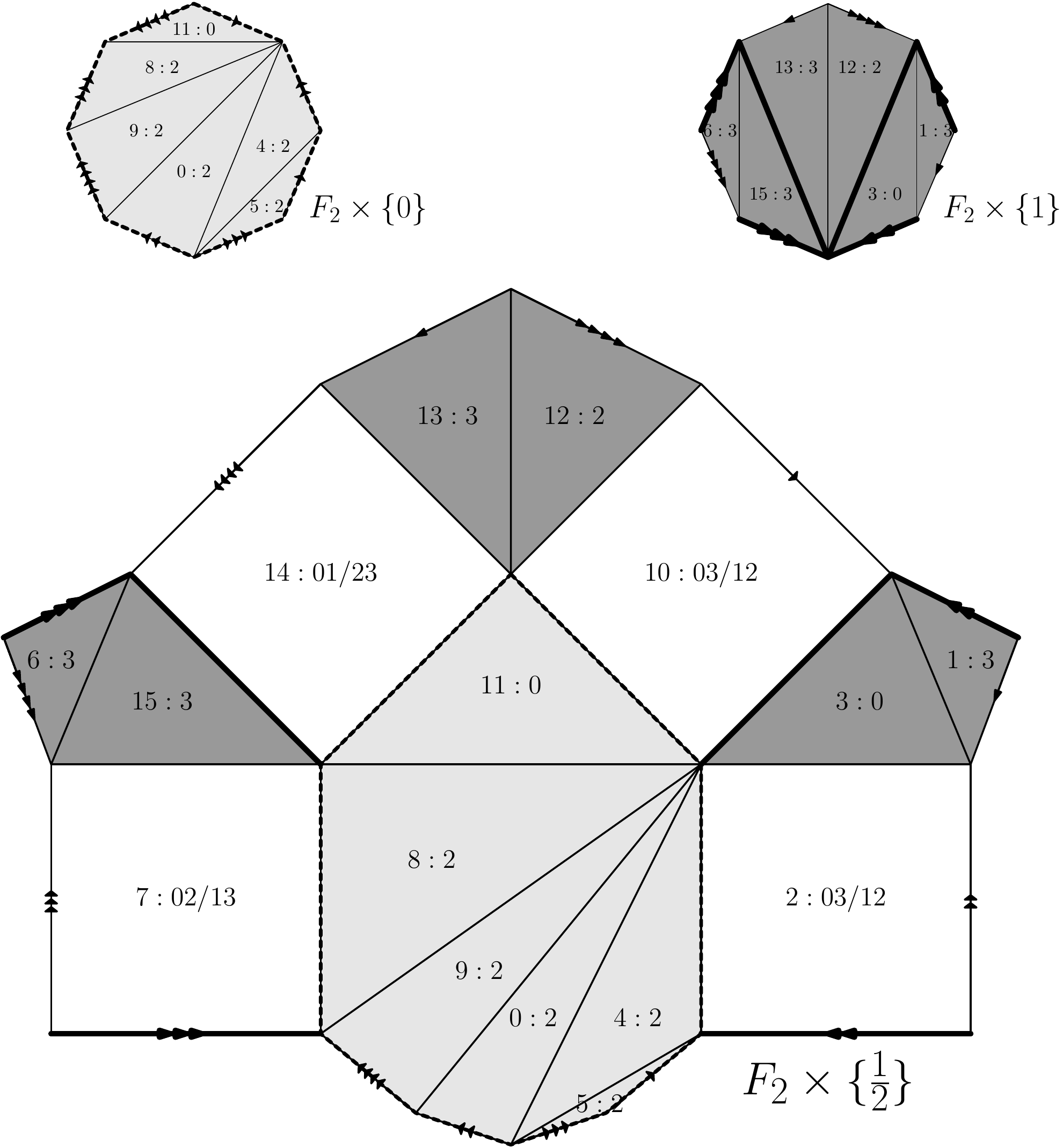}
	\caption{Boundary components and canonical separating genus two surface of a minimal $16$-tetrahedra triangulation
		of $F_2 \times I$. The solid and dashed lines denote spines in the boundary components
		corresponding to the four quadrilaterals in the splitting surface. \label{fig:genustwo}}
  \end{center}
\end{figure}

$\mathbf{(g = 0)}$
 In this example we show a minimal triangulation of $S^2\times I$ using the inflation construction. Note, in particular, that we have chosen a frame that separates and so for the inflation at the edge, we have two cones to attach, each a cone over a single triangle, itself a cone over the circle.  Again, the steps of the inflation construction can be observed by following the sequence of arrows, beginning with a 2-triangle minimal triangulation of $S^2$ and forming the cone over this triangulation with ideal vertex $v^*$. We have a frame with a single edge $\lambda = <e_1>$ in the vertex-linking surface $\mathcal{S}^*$. The next step is to inflate in the edge $e_1$, adding the tetrahedron $(e_1)$ to $\tri^*$ and a quadralateral to the vertex-linking surface $\mathcal{S}^*$. We then inflate at the edge $E$ adding two cones, each a cone on a triangle. By Theorem \ref{triang:inflation}, we have a 5--tetrahedron triangulation of $S^2 \times I$. This process is not unique, leading to three combinatorially distinct minimal triangulations of $S^2 \times I$. One of them is shown in the Figure~\ref{fig:s2xi} below. 

\begin{figure}[htbp]
 \begin{center}
\includegraphics[width= 6 in]{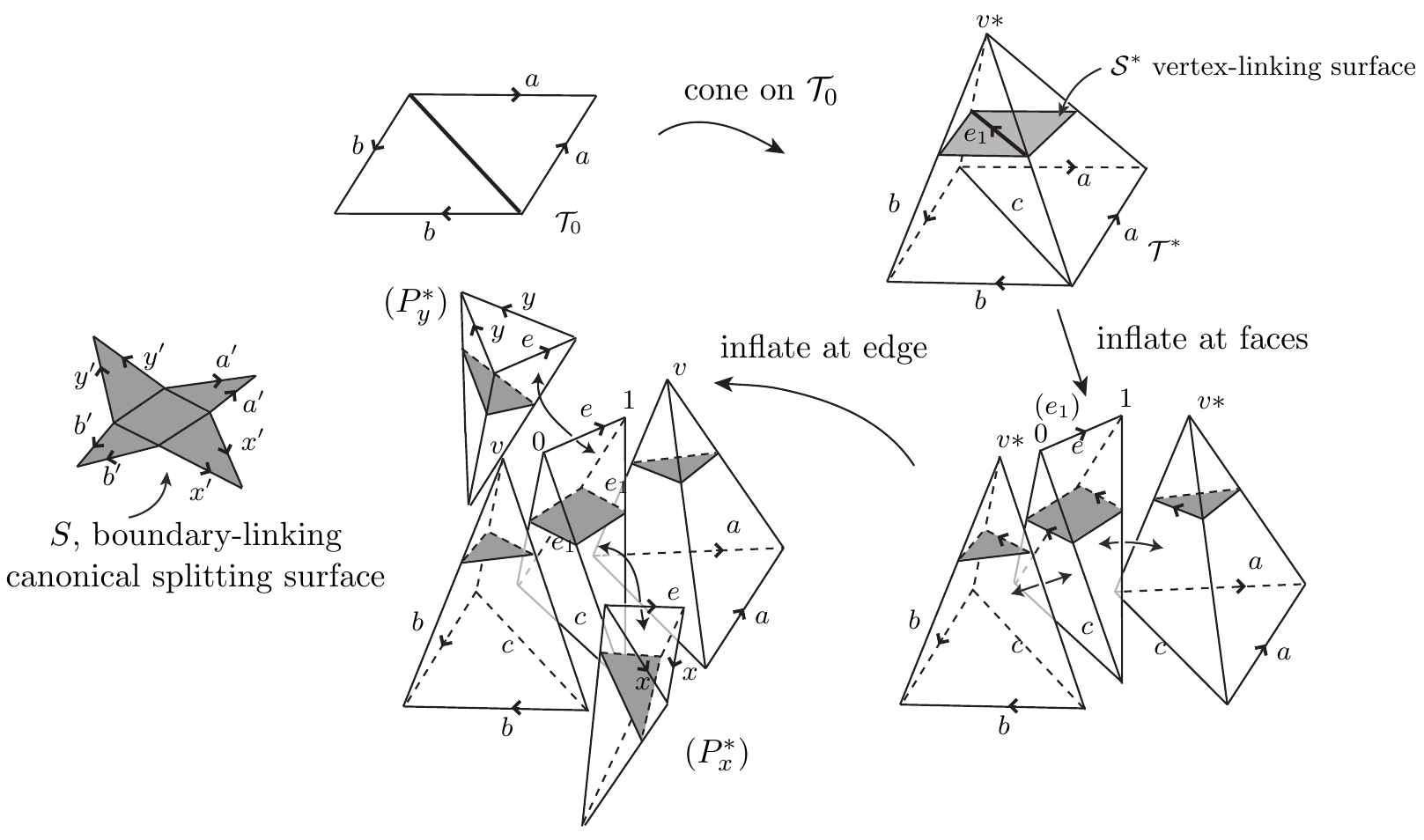} 
\caption{Triangulations of $S^2\times I$: A five-tetrahedron triangulation of $S^2\times I$ obtained by inflating from the cone point $v^*$ of the cone  on a minimal triangulation of $S^2$. \label{fig:s2xi}}
\end{center}
\end{figure}


\subsection{Boundary faces of tetrahedra and edges of small order}    
\label{sec:main results}

We start with some general observations. Suppose $M$ is a compact, irreducible, $\partial$--irreducible, orientable 3--manifold with non-empty boundary.
Jaco and Rubinstein \cite{JR} show that a minimal triangulation of $M$ is 0--efficient and all vertices are in $\partial M$ with precisely one vertex in each boundary component (unless $M$ is the 3--ball).  

\begin{lemma}\label{lem:one-face}
Suppose $M$ is a compact, irreducible, $\partial$--irreducible, orientable 3--manifold with non-empty boundary, and $M$ is not homeomorphic with the 3--ball. If $\mathcal{T}$ is a minimal triangulation of $M$, then 
no tetrahedron of $\mathcal{T}$  has more than one face in the boundary of $M$.
\end{lemma}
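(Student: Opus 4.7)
The strategy is to show that having a tetrahedron $\sigma$ with two boundary faces forces a simplification of $\mathcal{T}$, contradicting minimality. The key topological observation is that any two faces of a tetrahedron share exactly one edge, so two boundary faces $f_1, f_2$ of $\sigma$ share a common edge $e \subset \partial M$, and their union $D := f_1 \cup f_2$ is a $2$-disk in $\partial M$ with $e$ in its interior. The remaining two faces $f_3, f_4$ of $\sigma$ share the opposite edge $e'$, and their union $D' := f_3 \cup f_4$ satisfies $\partial D' = \partial D$, so that $D \cup D' = \partial \sigma$ bounds the $3$-ball $\sigma$. Since $\sigma$ meets $\partial M$ exactly along the disk $D$, the space $M \setminus \mathrm{int}(\sigma)$ is homeomorphic to $M$ via an isotopy pushing $D$ across $\sigma$ to $D'$.

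The plan is to realise this topological simplification combinatorially by removing $\sigma$ from $\mathcal{T}$. The two face pairings of $f_3$ and $f_4$ with neighbouring tetrahedra $\tau_3, \tau_4$ are replaced by a single face pairing between $\tau_3$ and $\tau_4$, obtained by composing with the natural identification $f_3 \leftrightarrow f_4$ internal to $\sigma$ (the unique simplicial map fixing $e'$ pointwise and swapping the endpoints of $e$). Provided this move yields a valid semi-simplicial triangulation, the result is a triangulation of $M$ with $|\mathcal{T}|-1$ tetrahedra, contradicting minimality of $\mathcal{T}$.

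The main obstacle is verifying the move in all configurations. The cases where $\sigma$ has three or four boundary faces are quickly dispatched: four boundary faces yield $\partial \sigma \subset \partial M$, forcing $M = \sigma$ to be a $3$-ball against hypothesis; the three-face case reduces to the two-face situation after examining the vertex link at the common vertex of the three boundary faces. The delicate cases arise from additional identifications within $\mathcal{T}$, for instance when $f_3$ is paired with $f_4$ directly in $\mathcal{T}$ (an internal self-gluing of $\sigma$) or when a vertex or edge of $\sigma$ coincides with another in $M$. Here the plan is to invoke the $0$-efficiency of minimal triangulations of irreducible, $\partial$-irreducible $3$-manifolds (other than the $3$-ball) established in \cite{JR}: in particular, no face of such a triangulation is a cone (\cite[Corollary~5.4]{JR}) and no pair of faces forms a cone (via the arguments of \cite[Lemma~2.7 and Corollary~2.10]{BAB-2004} and \cite[Lemma~3.6 and Corollary~3.8]{BAB-2007} that were used earlier in the proof of Corollary~\ref{cor:minimal}). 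These structural results rule out precisely the pathological identifications that would otherwise obstruct the reduction move, so in every case the move produces a strictly smaller triangulation of $M$ and the contradiction is complete.
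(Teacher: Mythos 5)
Your outline gets the easy cases (four or three boundary faces) roughly right, but the two--boundary--face case is where the real work is, and there your argument has a genuine gap. You propose a single combinatorial move (delete $\sigma$ and replace the two gluings on $f_3,f_4$ by one composite gluing $g_3\leftrightarrow g_4$) and assert that whenever this is a valid semi-simplicial triangulation it triangulates $M$. That assertion is never verified, and it is not an innocuous isotopy: the move is a genuine quotient, collapsing $\sigma$ onto a single triangle, and checking that the quotient is homeomorphic to $M$ (and is a manifold at all, e.g.\ along the edge $e'$ and along the folded edges of $\partial(f_1\cup f_2)$) requires a real argument that is absent.

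More importantly, your plan for the ``delicate cases'' is pointed at the wrong obstruction. The $0$--efficiency and cone--face results you cite rule out things like a face with two of its edges identified, or two faces glued along two edges; they say nothing about the configuration that the paper actually has to struggle with, namely that the edge $e'$ opposite $e$ lies in $\partial M$. That configuration is perfectly compatible with $0$--efficiency and with the absence of cone faces, yet it is precisely the case where the straightforward ``un-layer $\sigma$'' move fails (the resulting complex is singular along $e'$). The paper therefore abandons the tetrahedron-removal idea in that case and argues instead on the boundary surface: $e'$ is a loop in $\partial M$, homotopic through $\Delta$ to the other diagonal $e^*$ of the quadrilateral $f_1\cup f_2$, and after a diagonal flip one obtains a minimal one-vertex triangulation of the boundary component $B$ containing two distinct edges ($e'$ and $e^*$) that are homotopic in $M$, hence by $\partial$--irreducibility homotopic in $B$ --- which is impossible unless $B$ is a sphere, contradicting $M$ not being a $3$--ball. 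Nothing in your proposal reaches this argument. Also, the three--boundary--face case in the paper is not ``reduced to the two--face case'': it is dispatched directly by noting that the apex vertex cannot be identified with the others, so some boundary component has at least two vertices, forcing (by the Jaco--Rubinstein vertex count for minimal triangulations) a spherical boundary component and hence $M$ a $3$--ball. You should at minimum supply a proof that your remove-and-reglue move triangulates $M$, and you need an independent argument for the case $e'\subset\partial M$ along the lines of the paper's diagonal-flip/homotopy argument.
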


\begin{proof} Suppose $\mathcal{T}$ is a minimal triangulation of $M$. Our proof considers possible cases.

If a tetrahedron of $\mathcal{T}$ has four faces in the boundary, then $\mathcal{T}$ is the one-tetrahedron triangulation of the 3--ball, which contradicts $M$ not homeomorphic to the 3--ball.  

If a tetrahedron of $\mathcal{T}$ has three faces in the boundary, then $\mathcal{T}$ has a boundary component with at least two vertices since the vertex in common to the three faces cannot be identified with any of the other vertices of the tetrahedron.  Hence, by the results of \cite{JR} mentioned above, a component of $\partial M$ is a 2--sphere and contradicts $M$ not homeomorphic to the 3--ball. 

If a tetrahedron of $\mathcal{T}$ has two faces in the boundary, then suppose $\Delta$ is such a tetrahedron. In this case $\Delta$ has all its vertices in the same component, say $B$, of $\partial M$. Let $e$ be the edge of $\Delta$ common to the two faces of $\Delta$ in $B$.  Then $e$ is a diagonal of a quadrilateral $Q$ (possibly with some identifications) in a minimal triangulation of $B$ induced by the triangulation $\tri$.  Let $e'$ be the edge of $\Delta$ opposite $e$ (i.e., $\Delta$ is the join of the edges $e$ and $e'$). Suppose $\text{int}(e') \subset \text{int}(M)$. If the two faces of $\Delta$ having $e'$ in common are not identified with each other, then $\Delta$ is a tetrahedron layered on a triangulation of $M$  and hence, $\tri$ would not be a minimal triangulation of $M$. If the two faces of $\Delta$ having $e'$ in common are identified with each other, then $\tri$ is a one-tetrahedron triangulation of the 3--ball or the solid torus, contradicting our hypothesis. The only remaining possibility is $e'\subset\partial M$.  Hence, we have $e'\subset B$, the same component as $e$, and an edge in the minimal triangulation of $B$, say $\tri_B$, induced by $\tri$. However, $\e'$ is a loop in $M$ homotopic through $\Delta$ to the diagonal of $Q$ transverse to $e$. Let $e^*$ denoted the diagonal in $Q$ transverse to $e$; then a diagonal flip in $Q$ exchanging $e$ for $e^*$, gives a minimal triangulation of $B$ with both $e'$ and $e^*$ as edges. But $e'$ is homotopic through $M$ to $e^*$ and $M$ $\partial$--irreducible, gives that $e'$ and $e^*$ are homotopic in $B$. This is impossible for distinct edges of a minimal triangulation of $B$, unless $B$ were the 2--sphere.  But this contradicts $M$ is not homeomorphic to the 3--ball.
\end{proof}

\begin{remark} The hypotheses that $M$ not be homeomorphic to the 3--ball and that $M$ be $\partial$--irreducible are both necessary.  The minimal (one-tetrahedon) and 0-efficient triangulation of the 3--ball has two faces in the boundary. Every minimal triangulation of a handlebody of genus $g,$ $g\ge 1$, is layered and hence must have a tetrahedron with two faces in the boundary; a handlebody of genus $g\ge 1$ is not $\partial$--irreducible. 
\end{remark}

An analysis of edges of small order was provided for minimal triangulations of closed manifolds in \cite{JR,JRT}; we provide a similar analysis here, in the case of edges of order one or order two, for minimal triangulations of manifolds with boundary. The necessary modifications for the case of nonempty boundary from the argument in \cite{JR} are minor.

\begin{proposition} Suppose $M$ is a compact, orientable, irreducible, $\partial$--irreducible 3--manifold with nonempty boundary and $\tri$ is a minimal triangulation of $M$. \begin{enumerate}\item if $\tri$ has an edge of order 1, then $M$ is a 3--ball, \item if $\tri$ has an edge of order 2, then it must be in $\partial$M. \end{enumerate}
\end{proposition}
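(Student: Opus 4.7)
My plan is to reduce both parts to the closed-manifold arguments of Jaco--Rubinstein \cite{JR} and Jaco--Rubinstein--Tillmann \cite{JRT}, using Lemma~\ref{lem:one-face} as the new ingredient that rules out configurations with boundary faces. Throughout I will use that $\tri$ is $0$--efficient, which follows from \cite{JR} because $M$ is compact, irreducible and $\partial$--irreducible (and, in part (1), one can assume $M \neq B^3$ to derive a contradiction).

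For part (1) I let $e$ be an edge of order $1$ with preimage $e'$ lying in a unique tetrahedron $\Delta$, on exactly two faces $F_1, F_2$ of $\Delta$. The central observation is a preimage count: if $F_i$ is identified under the face pairing with any face other than $F_{3-i}$, then that identification sends $e'$ to a second edge of $\widetilde{\Delta}$ mapping to $e$, contradicting order~$1$. Therefore each $F_i$ is either in $\partial M$ or identified with $F_{3-i}$. If both lie in $\partial M$, then $\Delta$ has two boundary faces and Lemma~\ref{lem:one-face} gives $M = B^3$ directly. Otherwise $F_1$ is identified with $F_2$, folding $\Delta$ along $e'$ into a \emph{cone}, i.e.\ two adjacent faces of $\Delta$ are identified across their common edge. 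This is exactly the configuration ruled out, in the presence of $0$--efficiency, by \cite[Corollary~5.4]{JR}, which again forces $M = B^3$.

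For part (2) I suppose for contradiction that $\tri$ has an interior edge $e$ of order~$2$. Since $M$ is a manifold along $e$, the link of $e$ in $\tri$ is a simple closed polygonal curve---here a bigon of exactly two edges. No face containing a preimage of $e$ can lie in $\partial M$, since such a face would drag $e$ into $\partial M$. The preimage-counting argument from part (1) then forces the (at most four) faces incident to preimages of $e$ to pair up among themselves under the face identifications, leaving only finitely many combinatorial types for the star of $e$---precisely the types already enumerated in the closed-case analyses of \cite{JR, JRT}. In each type one locates either a single face of $\tri$ with two of its edges identified, or a pair of faces of $\tri$ whose union is a cone. The first case forces $M = B^3$ by \cite[Corollary~5.4]{JR}, but a minimal triangulation of $B^3$ has a single tetrahedron and no interior edges, contradicting the interiority of $e$. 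The second case contradicts minimality of $\tri$ by the Burton reductions in \cite[Lemma~2.7, Corollary~2.10]{BAB-2004} and \cite[Lemma~3.6, Corollary~3.8]{BAB-2007}, which produce a strictly smaller triangulation of $M$.

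The hardest step, I expect, is the combinatorial enumeration of the star of $e$ in part (2): the bigon link admits several possible pairings of its adjacent tetrahedra, including the degenerate case where both preimages of $e$ lie in a single tetrahedron, and one must verify that every configuration yields either a cone face or a cone pair. This enumeration is essentially carried out in the closed-manifold setting of \cite{JR, JRT}, and the only new work needed in the bounded setting is to confirm that no preimage face can be a boundary face---which here is automatic from $e \subset \mathrm{int}(M)$, so the closed-case enumeration transfers with no additional cases.
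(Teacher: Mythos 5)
Your part (1) and part (2) both diverge from the paper's proof, and part (2) in particular has a genuine gap.

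For part (1), your observation that an interior order-one edge forces the two faces containing its preimage to be identified across that edge (folding $\Delta$ onto itself) is a correct and useful reduction. However, the citation to \cite[Corollary~5.4]{JR} does not quite close the argument. That corollary, as used elsewhere in this paper (see the proof of Corollary~\ref{cor:minimal}), concerns a \emph{single face} of the triangulation with two of its edges identified, and it concludes $M=S^3$ in the closed setting. A tetrahedron with two of its \emph{faces} identified across a common edge is a different configuration, and the image face in $M$ generically has three distinct edges, so it is not itself a cone. The paper instead observes that the fold makes the edge $e'$ opposite $e$ bound a disc in $\Delta\subset M$; if $e'\subset\partial M$, $\partial$--irreducibility finishes, and if $e'$ is interior, the disc is normalized with $e'$ acting as a barrier, so it either becomes a normal disc (contradicting $0$--efficiency) or sweeps across $M$ (forcing $M\cong B^3$, contradicting minimality). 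You would need to either supply a lemma in the bounded setting covering the two-faces-identified configuration, or run a barrier argument like the paper's.

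For part (2) the proposal does not work as written. Your claim that every admissible star of an interior order-two edge exhibits ``either a single face of $\tri$ with two of its edges identified, or a pair of faces of $\tri$ whose union is a cone'' is not substantiated and does not match the actual case analysis. The paper's strategy is to attempt the $2$--$0$ collapse across the bipyramid around $e$ and then classify the obstructions; these are (a) the two opposite edges of the bipyramid identified so as to produce an embedded $\mathbb{R}P^2$ (contradicting irreducibility), (b) the top faces $t,t'$ (or bottom faces $b,b'$) of the bipyramid both lying in $\partial M$ (producing an isolated boundary vertex and contradicting minimality), or (c) $t$ identified with $t'$, which yields either an $L(3,1)$ connected summand or an interior vertex, each a contradiction. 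None of these is a cone. Moreover, your assertion that the bounded case introduces no new work ``which here is automatic from $e\subset\mathrm{int}(M)$'' is incorrect: the faces $t,t',b,b'$ do \emph{not} contain $e$, so $e$ being interior does not rule out their lying in $\partial M$; this is exactly obstruction (b), and it must be handled explicitly.
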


\begin{proof} Given $M$ as in the hypothesis, suppose $\tri$ is a minimal triangulation of $M$. 

Suppose $\tri$ has an edge $e$ of order 1. If $e$ is in $\partial M$, then a tetrahedron of $\tri$ would meet $\partial M$ in at least two faces; hence, by Lemma \ref{lem:one-face}, $M$ is homeomorphic to the 3--ball.  So, we consider the case where  the interior of $e$, an edge of order 1, is in $\text{int}(M)$.  Let $\Delta$ denoted the tetrahedron of $\tri$ containing $e$; then the edge $e'$ opposite $e$ in $\Delta$ bounds a disk in $\Delta\subset M$. If $e'$ were in $\partial M$, then by $M$ $\partial$--irreducible, we have $M$ a 3--ball and $\tri$ the one-tetrahedron, 0-efficient triangulation of the 3--ball. So, the only possibility is that $e'$ is in the interior of $M$.  Let $D'$ denote the disk bounded by $e'$ in $M$ and let $N = N(D')$ be a small regular neighborhood of $D'$ in $M$. Then the frontier of $N$, denoted $D$ is a properly embedded disk in $M$ ($D'$ meets $\partial M$ in the vertex of the triangulation $\tri$). The edge $e'$ serves as a barrier and $D$ shrinks to a normal disk or sweeps completely across $M$. The former contradicts $M$ 0--efficient  \cite{JR} and the latter results in $M$ being the 3--ball and contradicts $\tri$ minimal.

\begin{figure}[htbp]
 \begin{center}
\includegraphics[width=4in]{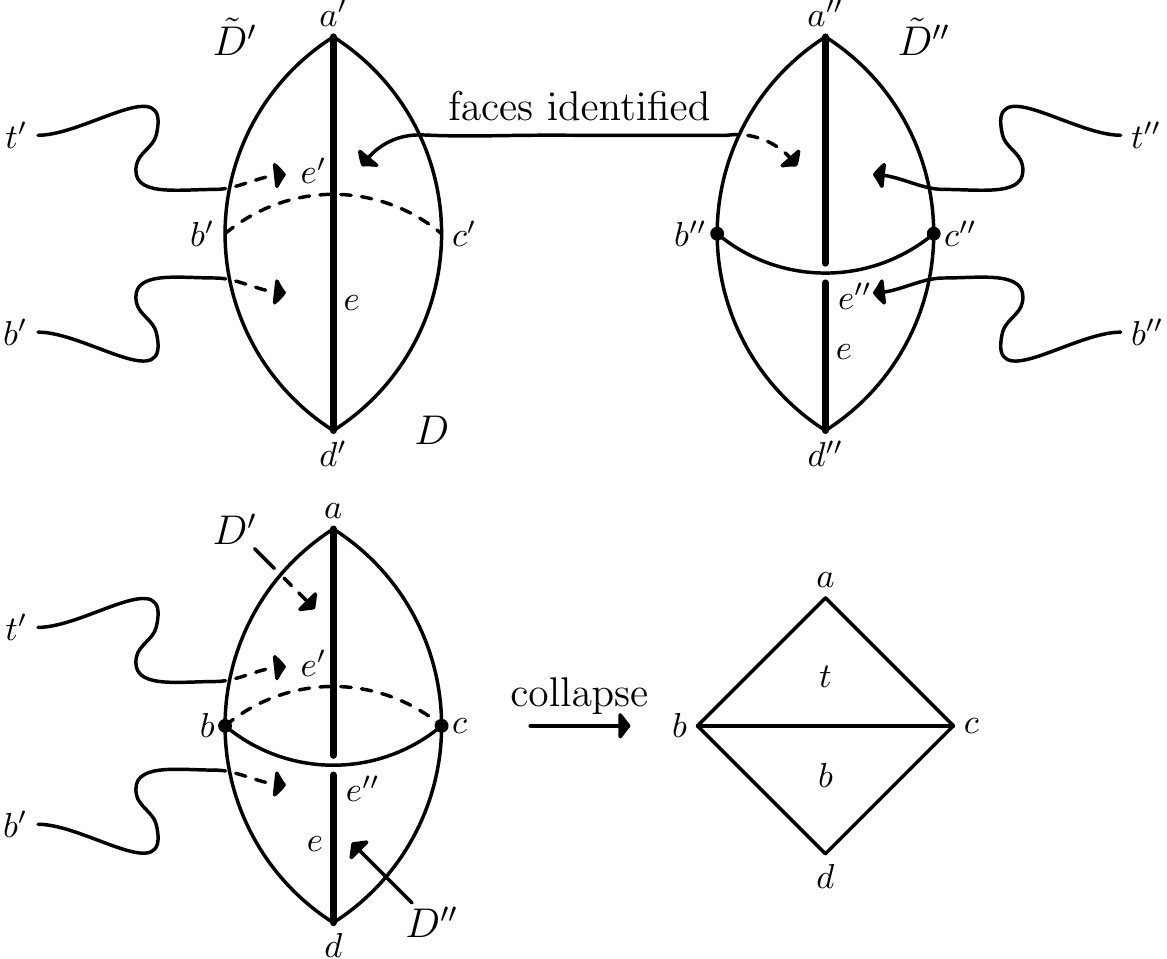}
	\caption{Interior edge of order two. \label{fig:order-two}}
 \end{center}
\end{figure}

Suppose $\tri$ has an interior edge, say $e$, of order 2. Then as in Figure \ref{fig:order-two} (Figure 38 of \cite{JR}), there are two tetrahedra $a'b'c'd'$ and $a''b''c''d''$ meeting in at least two faces that have the edge $e$ in common.  Our proof is similar to that in \cite{JR} with just a couple of minor twists. The idea, just as in \cite{JR}, is to show that the triangulation can be collapsed and contradict that it is minimal.  However, there are possible obstruction to such a collapse.  One obstruction would be that the edges $e'$ and $e''$ are already identified  and the identification takes $\overline{b'c'}$ to $\overline{c''b''}$; however, in this case there would be an $\mathbb{R}P^2$ embedded in $M$ contradicting $M$ irreducible ($M$ has nonempty boundary).  A second obstruction to collapsing is that the faces $t$ and $t'$ (or $b$ and $b'$) are both in $\partial M$; but then $M$ would have an isolated vertex $a$ (or $b$) in the boundary and $\tri$ would not be a minimal triangulation. The last possible obstruction to the desired collapse is that $t$ and $t'$ (or $b$ and $b'$) are  already identified.  Since $M$ is orientable, there are three possible identifications of the face $a'b'c'$ with the face $a''b''c.''$   Just as in the proof of Proposition 6.3 of \cite{JR}, two of these identifications lead to a 3--fold in $M$, giving $M$ a connected summand the lens space $L(3,1)$ and contradicting $M$ irreducible ($M$ has boundary); the third 
$a'b'c'\leftrightarrow a''b''c''$ makes the vertex an interior vertex contradicting $\tri$ minimal.
\end{proof}

Recall that the complexity of a 3--manifold was defined in \S\ref{prelim:complexity} as the minimal number of tetrahedra in a triangulation. 

\begin{proposition} Suppose $M$ is a compact, irreducible, $\partial$--irreducible, orientable 3--manifold with non-empty boundary. A lower bound on the complexity of $M$ in terms of the genus of its boundary is given by:
$$c(M) \ge 2 \sum_{F \subseteq \partial M} (2g(F)-1),$$
where the sum is taken over all connected components $F$ of $\partial M.$
\end{proposition}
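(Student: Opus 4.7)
The plan is to reduce the bound to an elementary Euler-characteristic count on each boundary component, coupled with the structural Lemma~\ref{lem:one-face} which limits how many boundary faces a single tetrahedron can contribute.

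First, I would dispose of the trivial case: if $M$ is the 3-ball, then $c(M) = 1$ while the right-hand side is $2(2\cdot 0 - 1) = -2$, so the inequality holds automatically. Assume from now on that $M \neq B^3$, and fix a minimal triangulation $\tri$ of $M,$ so $\tri$ has $c(M)$ tetrahedra. The Jaco--Rubinstein structure result quoted at the beginning of \S\ref{sec:main results} gives two crucial facts: every vertex of $\tri$ lies in $\partial M,$ and each boundary component contains exactly one such vertex. Moreover, irreducibility of $M$ combined with $M \neq B^3$ forces every boundary component $F$ to satisfy $g(F) \geq 1$, since any boundary 2-sphere would, by irreducibility, bound a 3-ball in $M$, forcing $M$ itself to be that ball.

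Next, I would examine the triangulation of a given boundary component $F$ induced by $\tri.$ It has $V = 1$ vertex, some number $E$ of edges, and $T$ triangles, with $3T = 2E$ since every edge of the closed surface $F$ is shared by exactly two triangles. The Euler characteristic relation $V - E + T = \chi(F) = 2 - 2g(F)$ then yields
$$T = 4g(F) - 2.$$
Thus $\partial M$ as a whole carries $\sum_{F \subseteq \partial M}(4g(F) - 2)$ triangles, and each of these triangles is a face of some tetrahedron of $\tri.$

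The final step invokes Lemma~\ref{lem:one-face}: each tetrahedron of $\tri$ has at most one face in $\partial M,$ and such a face necessarily lies in a single boundary component. Therefore
$$\sum_{F \subseteq \partial M} (4g(F) - 2) \;=\; \#\{\text{boundary faces of } \tri\} \;\leq\; \#\{\text{tetrahedra of } \tri\} \;=\; c(M),$$
which rearranges to the desired inequality $c(M) \geq 2\sum_{F \subseteq \partial M}(2g(F) - 1).$ No genuine obstacle arises: the proof simply assembles Lemma~\ref{lem:one-face}, the Jaco--Rubinstein vertex count, and a one-line Euler characteristic computation. The only subtle point worth stating carefully is why no boundary component can be a sphere, since otherwise the formula $T = 4g(F) - 2$ would be misinterpreted; this is precisely where irreducibility together with $M \neq B^3$ enters.
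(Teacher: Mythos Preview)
Your proof is correct and follows exactly the paper's approach: the paper's proof is the one-line remark that the statement ``is an immediate consequence of Lemma~\ref{lem:one-face} and the fact that the number of faces in a minimal triangulation of a closed surface of genus $g$ is $4g-2$.'' You have simply unpacked that sentence, deriving the $4g-2$ count from the Jaco--Rubinstein one-vertex condition and Euler characteristic, and handling the $3$--ball and sphere-boundary edge cases explicitly.
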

\begin{proof} This is an immediate consequence of Lemma \ref{lem:one-face} and the fact that the number of faces in a minimal triangulation of a closed surface of genus $g$ is $4g-2$.
\end{proof}


\subsection{Complexity of $\mathbf{F\times I}$ and its minimal triangulations}
\label{subset:complexity of FxI}
\label{subsec:minimal triangulations of FxI}

\begin{theorem}\label{thm:complexity of FxI}
Let $F$ be a closed, orientable surface of genus $g\ge 1$ and $I$ be a closed interval.
Then $c(F\times I) = 10g-4.$
\end{theorem}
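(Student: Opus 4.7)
The upper bound $c(F_g \times I) \le 10g-4$ is already provided by Theorem~\ref{triang:inflation}, which for each $g \ge 1$ constructs an inflation of the cone on a minimal one--vertex triangulation of $F_g$ as a triangulation of $F_g \times I$ with exactly $10g-4$ tetrahedra. The plan for the matching lower bound is to analyse any minimal triangulation $\tri$ of $F_g \times I$ and to exhibit two disjoint families of tetrahedra in $\tri$: one family of size at least $8g-4$ forced by the boundary triangulations, and a second disjoint family of size at least $2g$ forced by a splitting normal surface.

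Because $F_g \times I$ is compact, orientable, irreducible, $\partial$--irreducible, and not a $3$--ball, the Jaco--Rubinstein results recalled at the start of \S\ref{sec:main results} say that $\tri$ has exactly one vertex in each boundary component, say $v_0 \in \partial_0$ and $v_1 \in \partial_1$. Each boundary component is then a one--vertex triangulation of $F_g$, hence has exactly $4g-2$ triangles by the Euler characteristic argument used in the proof of the preceding proposition, giving $8g-4$ boundary faces in total. Lemma~\ref{lem:one-face} then tells us that each tetrahedron of $\tri$ has at most one face on $\partial M$, so at least $8g-4$ tetrahedra of $\tri$ have a face on the boundary; I will call these the \emph{boundary tetrahedra}.

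For the second family, I classify each tetrahedron $\sigma$ of $\tri$ by the pair $(a,b)$ counting how many of its four vertices equal $v_0$ and $v_1$, and choose a closed, connected, orientable normal surface $S$ that separates $\partial_0$ from $\partial_1$ and minimises $\Quad(S)$ among all such surfaces (for instance, starting from the normalisation of $F_g \times \{1/2\}$). Corollary~\ref{cor:genus of boundary homologous} gives $\Quad(S) \ge 2g$. A quadrilateral of $S$ in a tetrahedron $\sigma$ partitions the four vertices of $\sigma$ into two pairs lying on opposite local sides of $S$; since each $v_i$ lies globally on the $\partial_i$--side of $S$, each pair must consist either of two copies of $v_0$ or of two copies of $v_1$, forcing $\sigma$ to be of type $(2,2)$. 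Any face of a $(2,2)$ tetrahedron has at most two of its three vertices on a common boundary component, so $(2,2)$ tetrahedra carry no boundary face; hence the tetrahedra containing a quadrilateral of $S$ are disjoint from the boundary tetrahedra.

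The main technical point is to upgrade $\Quad(S) \ge 2g$ from a bound on the total quadrilateral count to a lower bound of $2g$ on the number of \emph{distinct} $(2,2)$ tetrahedra carrying a quadrilateral of $S$, and this is where the minimisation of $\Quad(S)$ enters. If some $(2,2)$ tetrahedron contained two parallel quadrilaterals of $S$, they would cobound a product region in that tetrahedron whose interior meets $S$ only along these two quadrilaterals, and a cancellation across this product region would yield a normal surface still separating the boundary components but with strictly fewer quadrilaterals, contradicting the choice of $S$. Hence each $(2,2)$ tetrahedron contains at most one quadrilateral of $S$, so there are at least $\Quad(S) \ge 2g$ distinct $(2,2)$ tetrahedra carrying a quadrilateral of $S$. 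Together with the $8g-4$ disjoint boundary tetrahedra, this yields $|\tri| \ge 10g-4$, which combined with the inflation construction gives $c(F_g \times I) = 10g-4$.
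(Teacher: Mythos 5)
Your upper bound and the boundary-face count giving at least $8g-4$ boundary tetrahedra match the paper. The gap is in the claim that every quadrilateral of your minimising surface $S$ lies in a $(2,2)$ tetrahedron. You argue that since a quadrilateral $Q$ in $\sigma$ locally separates the four corners into two pairs, and each $v_i$ lies globally on the $\partial_i$--side of $S$, the two corners on each side of $Q$ must be of the same colour. But $Q$ only separates $\sigma$ locally; if $S$ also has normal \emph{triangles} in $\sigma$, a corner on one side of $Q$ can be cut off by such a triangle and lie in the other global component of $M\setminus S$. Concretely, take $\sigma$ with corners $a\mapsto v_0$ and $b,c,d\mapsto v_1$, a quadrilateral $Q$ separating $\{a,b\}$ from $\{c,d\}$, and one normal triangle at the corner $b$: the three regions of $\sigma\setminus S$ then sit in $M_0$, $M_1$, $M_1$, every edge of $\sigma$ crosses $S$ with the correct parity, and the local picture of a separating normal surface is perfectly consistent with a quadrilateral in a $(1,3)$ tetrahedron. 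Minimality of $\Quad(S)$ does not obviously exclude this, and the ``cancel two parallel quadrilaterals'' move you invoke to get at most one quad per tetrahedron is also not a legitimate normal-surface operation as stated (the exchanged annuli lie in $\partial\sigma$, and re-normalisation could in principle reintroduce quads).

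The paper avoids all of this by not taking a minimiser: it uses the \emph{canonical} splitting surface determined by the two-colouring, which by construction meets each type-$C$ (i.e.\ $(2,2)$) tetrahedron in exactly one quadrilateral, each type-$B$ tetrahedron in one triangle, and type-$A$ tetrahedra not at all. Then $\Quad(S)=|C|$ by construction, Corollary~\ref{cor:genus of boundary homologous} gives $|C|\ge 2g$ directly, and the disjointness from the $8g-4$ boundary tetrahedra is automatic since a boundary face of a tetrahedron forces three corners of the same colour. Your strategy is the same in spirit (boundary faces plus splitting-surface quads), and the fix is simply to apply the corollary to this canonical surface rather than to a quad-minimiser, which renders both of the problematic steps unnecessary.
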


\begin{proof}
The lower bound on complexity from above gives $c(F\times I)\ge 8g-4,$ where $g=g(F).$ This arises from taking into account the boundary faces of the triangulation. Choose one colour (called red) for one boundary component, denoted $F_r$, and another colour (called blue) for the other, $F_b$. We have $\text{genus}(F_r)=\text{genus}(F_b)=g.$

This gives tetrahedra of types $A_r,$ $A_b,$ $B_r,$ $B_b,$ and $C,$ where $A_x$ has four vertices of colour $x,$ $B_x$ precisely three vertices of colour $x,$ and $C$ precisely two vertices of colour $x.$ The faces in the red boundary component are all in tetrahedra of type $A_r$ or $B_r,$ so $|A_r|+|B_r| \ge 4 g -2.$ Similarly $|A_b|+|B_b| \ge 4 g -2,$ and so $|A_r|+|A_b|+|B_r|+|B_b|\ge 8g-4.$

We get a canonical normal splitting surface $S$ from the colouring, which contains exactly one quadrilateral disc for each tetrahedron of type $C.$
Hence Corollary~\ref{cor:genus of boundary homologous} implies that $|C| = \Quad(S) \ge 2g.$ Thus, $c(F\times I)\ge 10g-4.$ The equality now follows from the triangulations described in \ref{subsec:minimal triangulations of FxI}.
\end{proof}


\begin{proposition}\label{pro:min S^2 x I}
$c(S^2\times I) = 5$
\end{proposition}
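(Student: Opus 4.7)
The plan is to establish the equality by proving both inequalities separately. The upper bound $c(S^2\times I)\le 5$ is constructive and is essentially already in hand: the inflation construction in \S\ref{sec:inflationexamples} (case $g=0$) gives an explicit triangulation with exactly $5$ tetrahedra, obtained as $2$ (cone over the minimal $2$--triangle triangulation of $S^2$) $+\;1$ (inflation at the unique face carrying the single edge of the frame $\lambda$) $+\;2$ (inflation at the edge $E$; since for $g=0$ the vertex of $\lambda$ is separating in $\mathcal{S}^*$, the link of $E$ in $\mathcal{S}^*$ falls into two triangles, so the inflation at an edge attaches two cones, each a cone over a triangle). Hence $c(S^2\times I)\le 5$.

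For the lower bound $c(S^2\times I)\ge 5$ the situation is genuinely more delicate than for $F\times I$ with $g\ge 1$, because $S^2\times I$ is reducible (the sphere $S^2\times\{\tfrac{1}{2}\}$ does not bound a $3$--ball), so the Jaco--Rubinstein $0$--efficiency machinery and in particular Lemma~\ref{lem:one-face} do not apply. My plan is to rule out triangulations with $n\le 4$ tetrahedra by a finite case analysis, which is clean enough to be verified by hand but is also accessible to computer enumeration using Regina~\cite{Regina} (as already invoked in the proof of Corollary~\ref{cor:minimal}). Concretely, for any triangulation $\tri$ of $S^2\times I$ with $n$ tetrahedra, the boundary of $\tri$ is a triangulation of $S^2\sqcup S^2$. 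Each component of $\partial M$ is a $2$--sphere, and a semi-simplicial triangulation of $S^2$ has at least $2$ triangles. Thus $\partial\tri$ has at least $4$ boundary triangles, leaving at most $4n-4$ internal triangles to be paired. A routine combinatorial analysis (or enumeration in Regina up to $4$ tetrahedra) shows that no such triangulation produces $S^2\times I$: after fixing the two boundary spheres and their vertices, the possible internal face identifications either produce a different manifold (typically $S^3$, a lens space, a ball, or a non-orientable quotient) or violate the pseudo-manifold condition at an interior vertex.

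The argument can be sharpened by noting that the canonical splitting sphere $S^2\times\{\tfrac{1}{2}\}$ can be put in normal form in $\tri$. Since $M$ is simply connected, any normal $2$--sphere separating the two boundary components must intersect some tetrahedron in a quadrilateral (the boundary-linking spheres use only triangles, and by Lemma~\ref{lem:inequ for quad surf} a one-vertex quadrilateral sphere has $q=-1$, which is impossible, forcing $v\ge 2$ and thus several tetrahedra of mixed vertex-colouring type in the sense of the two-colouring introduced in the proof of Theorem~\ref{thm:complexity of FxI}). Combining the boundary-face count ($\ge 2$ per boundary sphere) with the need for at least one ``mixed'' tetrahedron carrying a quadrilateral of the separating sphere, one gets the bound $c(S^2\times I)\ge 5$.

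The main obstacle is the first step of the lower bound: because $S^2\times I$ is reducible, the strong restriction of Lemma~\ref{lem:one-face} (at most one boundary face per tetrahedron in a minimal triangulation) is not available, so the clean $|A_r|+|B_r|\ge 4g-2$ estimate used in the proof of Theorem~\ref{thm:complexity of FxI} degenerates to the useless $\ge-2$ when $g=0$. This is why the case analysis (or, equivalently, the Regina enumeration) cannot be avoided, and is the conceptual heart of the proof.
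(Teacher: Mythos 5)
Your upper bound is fine and matches the paper's construction. You also correctly diagnose the key obstacle for the lower bound: $S^2\times I$ is reducible, so Lemma~\ref{lem:one-face} is unavailable and the face-count estimate from the proof of Theorem~\ref{thm:complexity of FxI} degenerates. And you correctly observe that the canonical splitting sphere must contain a quadrilateral, so $|C|\ge 1$ in the colouring notation.

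However, the ``sharpened'' argument you sketch does not actually close the gap, and the alternative you offer (case analysis or Regina enumeration over all triangulations with $n\le 4$ tetrahedra) is a brute-force fallback rather than a proof. Counting boundary faces is weak here precisely because, without Lemma~\ref{lem:one-face}, a single tetrahedron of type $A_x$ can carry two or more faces of the boundary sphere $F_x$; so ``$\ge 2$ faces per boundary sphere plus $|C|\ge 1$'' only forces three tetrahedra, not five. The paper instead works entirely with the colouring and two extra observations you have not supplied: (i) a parity argument --- since each quadrilateral of the splitting sphere has two short and two long edges, while each small (resp.\ large) triangle has three short (resp.\ long) edges, the numbers $|B_r|$ and $|B_b|$ are both even; and (ii) a connectivity argument --- neither $|B_r|$ nor $|B_b|$ can be zero, because $F_r$ (say) being non-empty would then force $|A_r|\neq 0$, and a type-$A_r$ tetrahedron can only reach a type-$C$ tetrahedron through a type-$B_r$ tetrahedron, disconnecting the triangulation. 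Together with $|C|\ge 1$ this gives $|B_r|+|B_b|+|C|\ge 2+2+1=5$ directly, with no enumeration. These two steps are the conceptual heart of the paper's proof and are missing from your proposal; you should either supply them or actually carry out (and record the outcome of) the finite enumeration you gesture at.
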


\begin{proof}
Above we provided an example of a triangulation of $S^2\times I$ with 5 tetrahedra; it remains to prove that this is the minimal number required. We will use the notation from the proof of Theorem~\ref{thm:complexity of FxI}. The splitting surface $S$ is not a vertex link and hence must contain at least one quadrilateral disc, whence  $|C|\ge 1.$ Also note that $S$ cannot consist of quadrilaterals alone, and that the number of triangles in $S$ is even. Since each quadrilateral disc has two short edges and two long edges, and each small (resp.\thinspace large) triangle has three short (resp.\thinspace long) edges, the numbers $|B_r|$ and $|B_b|$ are both even. It remains to show that neither can be zero. We may take advantage of the symmetry of the situation and suppose that $|B_r|=0.$ Since $F_r$ is non-empty, this implies $|A_r|\neq 0,$ but then the triangulation is not connected since tetrahedra of type $A_r$ can only connect to tetrahedra of type $C$ through tetrahedra of type $B_r.$ In particular, the minimal number required is $|C|=1$ and $|B_r|=2=|B_b|.$ It is now easy to check that any triangulation of this form arises from the inflation procedure and that there are precisely three combinatorially inequivalent minimal triangulations.
\end{proof}

We conclude that the triangulations constructed in \S\ref{sec:examples of inflations} are minimal triangulations; we end this discussion by showing that all minimal triangulations arise in this way. 

\begin{proposition}
Let $S$ be a closed, orientable surface. Every minimal triangulation of $F \times I$ is obtained by inflating the cone over a 1--vertex triangulation of $F.$
\end{proposition}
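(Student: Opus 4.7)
The plan is to pin down the combinatorial structure of any minimal triangulation $\mathcal{T}$ of $F\times I$ using Theorem~\ref{thm:complexity of FxI}, and then to identify $\mathcal{T}$ as an inflation by appealing to the Jaco--Rubinstein inflation--crushing correspondence of \cite{JR:Inflate}. The case $g=0$ has already been handled in Proposition~\ref{pro:min S^2 x I} together with the examples in Section~\ref{sec:inflationexamples}, so throughout I assume $g\ge 1$.

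First I would colour the corners of $\mathcal{T}$ red and blue and partition the tetrahedra into the five types $A_r,A_b,B_r,B_b,C$ as in the proof of Theorem~\ref{thm:complexity of FxI}. Because $|\mathcal{T}|=10g-4$ meets the lower bound, the three inequalities in that proof must all be equalities: $|A_r|+|B_r|=4g-2=|A_b|+|B_b|$ and $|C|=2g$. Combined with Lemma~\ref{lem:one-face} and the fact that $F_r$ carries $4g-2$ triangles in its induced $1$-vertex triangulation $\mathcal{T}_g$, this forces every tetrahedron of type $A_r$ or $B_r$ to contribute exactly one face to $F_r$; in particular, every $B_r$ tetrahedron's unique three-red face lies in $F_r$, and symmetrically on the blue side. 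The next key step is to rule out $A_r$ and $A_b$ tetrahedra. Every interior face of an $A_r$ tetrahedron has three red corners and can therefore only be glued to an interior face of another $A_r$ tetrahedron (the only other three-red faces lie in $F_r$); consequently the $A_r$ sub-complex is face-disconnected from the sub-complex formed by the $B_r,B_b,C$ tetrahedra. Both sub-complexes have corners at the single red vertex, so a non-empty $A_r$ sub-complex would force the vertex link of the red vertex to fail to be a single disc, contradicting that $\mathcal{T}$ is a manifold triangulation. Hence $|A_r|=|A_b|=0$, and so $|B_r|=|B_b|=4g-2$ and $|C|=2g$.

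With these counts, the $4g-2$ boundary faces of the $B_r$ tetrahedra assemble into the $1$-vertex triangulation $\mathcal{T}_g$ of $F_r$, each $B_r$ tetrahedron is a cone from its unique blue corner over its boundary face, and symmetrically for $F_b$. The canonical splitting surface $S$ from the proof of Theorem~\ref{thm:complexity of FxI} carries $8g-4$ normal triangles (one in each $B_r$ and $B_b$ tetrahedron) and $2g$ quadrilaterals (one in each $C$ tetrahedron); by Corollary~\ref{cor:genus of boundary homologous} it has genus exactly $g$ and so is boundary-parallel to $F_b$, matching the boundary-linking canonical splitting surface depicted in Figure~\ref{F1 times I}. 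Crushing $\mathcal{T}$ along $S$ in the sense of \cite{JR:Inflate} collapses the collar of $F_b$ -- the $B_b$ tetrahedra together with the blue halves of the $C$ tetrahedra -- to a single ideal vertex $v^*$, and leaves the $B_r$ tetrahedra as cones from $v^*$ over the $4g-2$ triangles of $\mathcal{T}_g$. The resulting ideal triangulation $\mathcal{T}^*$ is exactly the cone over the $1$-vertex triangulation $\mathcal{T}_g$ of $F$, with $|\mathcal{T}^*|=4g-2$. Since inflation and crushing along a boundary-linking surface are inverse operations by \cite[\S4]{JR:Inflate}, $\mathcal{T}$ is then the inflation of $\mathcal{T}^*$ along the frame $\lambda\subset\mathcal{S}^*\cong\mathcal{T}_g$ whose $2g$ edges are the edges of $\mathcal{T}_g$ along which $C$ tetrahedra have been inserted, with the unique index-$4g$ vertex of $\lambda$ resolved by the $4g-2$ cone-over-$4g$-gon tetrahedra of type $B_b$.

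The principal obstacle is the crushing step: one needs to verify that crushing $\mathcal{T}$ along $S$ really does produce the cone $\mathcal{T}^*$ cleanly, with no non-manifold artefacts and no spurious collapse of any $B_r$ tetrahedron, and dually that the $2g$ edges of $\mathcal{T}_g$ singled out by the $C$ tetrahedra really do form a valid frame in $\mathcal{S}^*$ (in particular, a spine). Both points come down to careful bookkeeping of face and edge identifications in $\mathcal{T}$ together with the inflation--crushing correspondence developed in \cite{JR:Inflate}.
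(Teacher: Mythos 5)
Your approach mirrors the paper's: colour vertices, partition tetrahedra into types $A_r, A_b, B_r, B_b, C$, use the sharpness of the lower bound $10g-4$ to force $|C|=2g$ and $|A_r|+|A_b|+|B_r|+|B_b|=8g-4$, rule out $A_r$ and $A_b$ via connectivity, and then identify $\mathcal{T}$ as the inflation of the crushed cone. That is exactly the paper's strategy, and your elimination of the $A_x$ tetrahedra (interior faces of $A_r$ are three-red so only glue to each other, hence face-disconnection) is a correct variant of the paper's one-line ``otherwise not connected.''

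The genuine gap is precisely the step you flag as ``the principal obstacle'' and then set aside as ``careful bookkeeping.'' It is not bookkeeping in the trivial sense --- it is where the paper does the substantive combinatorial work that identifies the triangulation. Specifically, the paper shows: $S$ has exactly two triangle regions $R_b$ and $R_r$, each a closed disc normally isotopic to the ``interior'' of the corresponding vertex link; the homotopy of $S$ onto $F_b$ sends $\partial R_b$ onto a frame in $F_b$ whose edges are at most the $2g$ quadrilateral edges, and minimality of the induced $1$--vertex triangulation $\mathcal{T}_g$ forces exactly $2g$ frame edges (hence no two quadrilaterals of $S$ share a blue edge); consequently $R_b$ has $4g$ boundary edges and $4g-2$ faces and is a triangulated $4g$--gon with no interior vertices. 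Only these facts guarantee that crushing along $S$ yields exactly the cone on $\mathcal{T}_g$ and that the frame, the $2g$ face-inflations, and the subdivision of the cone over a $4g$--gon (identified with $R_r$) reconstitute $\mathcal{T}$ combinatorially. Without verifying that $R_b$ is a $4g$--gon with all vertices on its boundary, you cannot conclude that the cone over a $4g$--gon inserted by the inflation agrees with the original $B_b$ sub-complex, so the proof does not close. You should carry out this analysis of the triangle regions of the splitting surface, or at minimum state and justify that $R_b$ and $R_r$ are discs triangulated without interior vertices by exactly $4g-2$ triangles each, with boundary a $4g$--cycle of quadrilateral edges.
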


\begin{proof}
We suppose that $g(F)\ge 1$ (the case of a sphere was already treated in Proposition~\ref{pro:min S^2 x I}),  and use the notation and conclusions from the proof of Theorem~\ref{thm:complexity of FxI}. Assuming minimality of the triangulation gives $|C| = 2g$ and $|A_r|+|A_b|+|B_r|+|B_b|= 8g-4.$ In particular, each tetrahedron of type $A_x$ or $B_x$ has a unique face in the boundary of $F \times I.$ But this forces $|A_r|=0=|A_b|,$ since otherwise the triangulation is not connected. So there are $2g$ tetrahedra of type $C$ and $4g-2$ tetrahedra of each type $B_r$ and $B_b.$ 

Denote $S$ the splitting surface, which is known to have genus $g.$ We first show that there are exactly two triangle regions in $S,$ each of which is a closed disc. Indeed, the vertex link $V_b$ of the blue vertex is a disc and $S$ has a subsurface normally isotopic to the subsurface of $V_b$ consisting of exactly those normal triangles in $V_b$ that do not meeting $\partial V_b.$ Similarly for the link of the red vertex. Moreover, this accounts for all normal triangles present in $S.$ Let $R_b$ and $R_r$ denote these triangle regions in $S.$ It follows that $\partial R_b$ (resp. $\partial R_r$) is a union of quadrilateral edges in $S.$

The homotopy taking $S$ into the blue boundary component takes $R_b$ onto $F_b$ and hence $\partial R_b$ onto a frame in $F_b.$ Since there are $2g$ quadrilateral discs, there are at most $2g$ edges in the frame on $F_b.$ But since the triangulation of $F_b$ is minimal, there must be exactly $2g$ edges in the frame and in particular, no two quadrilaterals in $S$ meet along blue edges. It follows that the disc $R_b$ in $S$ has $4g$ edges and $4g-2$ faces, and hence is a $4g$--gon triangulated with all vertices on the boundary. The same reasoning applies to the disc $R_r.$ 

Crushing the triangulation of $F \times I$ along $S$ now results in two triangulated cones; one is a cone on the triangulation of $F_b,$ and the other a cone on the triangulation of $F_r.$ We work with the former triangulated cone and denote it $\tri_b.$ The frame on $F_b$ arising from $\partial R_b$ can be isotoped to a frame in the link of the cone point of $\tri_b$. Now inflation inserts $2g$ tetrahedra of type $C$ as well as a cone on a $4g$--gon. The interior of the $4g$--gon is naturally identified with the image of the interior of $R_r$ on $F_r,$ and hence the cone can be subdivided to give a triangulation combinatorially equivalent to the one we started with. 
\end{proof}


\section{The polyhedral realisation problem}
\label{sec:polReal}

The famous {\em realisation problem} asks whether or not a given combinatorial orientable surface $S$ is {\em realisable}, i.e.
if it has a polyhedral embedding into $\mathbb{R}^3$. We will show that there is no sequence of realisable normal surfaces of {\em 
unusually high genus}, i.e. super-linear genus with respect to the number of vertices.


\subsection{Polyhedral embeddings}

A decomposition of a surface into polygons such that the intersection 
of each pair of polygons is either empty, a common vertex or a common 
edge is called a {\em combinatorial surface}. Given a combinatorial surface 
$S$ with set of vertices $V$, a {\em polyhedral 
embedding} of $S$ is a function
$$ i \co V \hookrightarrow \mathbb{R}^3 $$
assigning coordinates to the vertices of $S,$ such that the convex hull of the 
vertices of each polygon of $S$ under $i$ 
is a (flat) $2$-dimensional polygon in $\mathbb{R}^3 $ and that 
two of these polygons intersect at most in a common vertex or edge, 
i.e.\thinspace no self-intersections of the surface occur.
An orientable combinatorial surface is called {\em realisable into $\mathbb{R}^3$} if it has a polyhedral 
embedding.


\subsection{Surfaces of unusually high genus}

Despite major research efforts to solve the realisation problem in general,
only partial results exist as of today. It is well-known
that all combinatorial $2$-spheres are realisable
\cite{Steinitz06UberEulPolyederRel}. Moreover,
while there are {\em combinatorial} tori with no polyhedral embedding
\cite{Ziegler08PolSurfsOfHighGenus,Gruenbaum03ConvPoly},
we know that all the {\em triangulated} ones are realisable
\cite{Archdeacon07How2ExToroidalMapsInSpace}.
For higher genus surfaces results become increasigly sparse.
For example, we know due to \cite{Schewe10NonrealMinTrigGenus6Surf} 
that none of the neighbourly 
$12$--vertex triangulated orientable 
surfaces of genus six, i.e. the ones with the maximum number of edges,
is realisable.

Furthermore, many other aspects of the realisation problem are largely unknown.
Here we will focus on one of them: given a sequence of combinatorial surfaces
$(S_k)_{k \in \mathbb{N}}$ with $n_k$ vertices, $n_k \overset{k \to \infty} \to \infty$, 
what is the highest genus $g(S_k),$ such that $S_k$ can be polyhedrally embedded into $\mathbb{R}^3$?
An elementary calculation shows that a
combinatorial surface $S$ with $n$ vertices has genus at most quadratic in $n$, namely
$ g(S)~\leq~\frac{1}{12}~(n-3)~(n-4)$
(see for example 
\cite[Lemma 2.1]{Ziegler08PolSurfsOfHighGenus}) and this bound is 
sharp in infinitely many cases as shown by 
Ringel~\cite{Ringel74MapColThm}.
However, neither a general obstruction 
nor any examples are known for 
families of surfaces of genus 
$g(S_k) \sim n_k^{\alpha}$, $\alpha \in ( 1,2 ]$, 
to be realisable in $\mathbb{R}^3$. 
The best current lower bound of
$$ g(S_k) \sim \frac{1}{8} n_k \log_2 n_k $$
is due to McMullen, Schulz and Wills~\cite{McMullen83Pol2MfldsOfHighGenus} or, more recently, 
Ziegler~\cite{Ziegler08PolSurfsOfHighGenus}.
This bound is only slightly better than
the trivial bound $ g(S_k) \in O(n_k) $.
We will therefore call a sequence of 
realisable combinatorial surfaces $S_k$ to be of 
{\em unusually high genus} if
$\frac{g(S_k)}{n_k} \to \infty$.



\subsection{Polyhedral embeddings of normal surfaces}
\label{ssec:realTech}

There are various techniques to find realisable surfaces of high genus:

\begin{itemize}
	\item Cs\'asz\'ar~\cite{Csaszar49PolyWithoutDiags}  
		proved the realisability of M\"obius' 
		triangulated $7$--vertex torus by giving explicit 
		coordinates of the vertices, found by an
		{\em intuitive search}.
	\item Bokowski developed a more systematic approach to find coordinates 
		for a polyhedral embedding of a combinatorial surface using {\em oriented matroids}
		(see Bokowski and Brehm~\cite{Bokowski87PolyGenus3W10Vert}  or 
		Bokowski and Eggert~\cite{Bokowski91TouteRealisToreMoeb}). 
		This technique also yields obstructions to polyhedral embeddings of
		certain combinatorial 
		surfaces.
	\item The realisation problem was settled for small triangulations of surfaces
		using a computer aided random search for coordinates due to
		Lutz~\cite{Lutz08EnumRandRealTrigSurf}, Bokowski~\cite{Bokowski08HeurMethFindRealSurf} 
		and Hougardy, Lutz and Zelke~\cite{Hougardy10SurfRealIntersSegFunc}.
	\item McMullen, Schulz and Wills~\cite{McMullen83Pol2MfldsOfHighGenus} constructed
		a sequence of surfaces of unusually high genus by 
		recursively connecting parallel copies of a surface
		with itself by an increasing number of handles. 
		These are the first examples of realisable combinatorial surfaces 
		with less vertices than handles.
	\item A more recent approach, due to Ziegler \cite{Ziegler08PolSurfsOfHighGenus},
		looks at surfaces as sub-complexes of the 
		$2$--skeleton of a $4$--polytope. Any of these sub-complexes has a 
		polyhedral realisation by projecting the $4$--polytope into 
		one facet and hence into $\mathbb{R}^3$ via the projection of a 
		Schlegel diagram. This approach yields an alternative 
		construction of polyhedral embeddings of combinatorial surfaces 
		of unusually high genus. To illustrate the power and convenience
		of this method note that the $7$--vertex M\"obius torus is a subcomplex 
		of the cyclic $4$-polytope with $7$ vertices and hence realisable (cf. 
		Cs\'asz\'ar's result in \cite{Csaszar49PolyWithoutDiags} and 
		\cite{Altshuler71PolyhedralRealizationsTori,Duke70EmbeddingOfMoebiusTorus}).
		Another corollary of this method is that any triangulated surface
		(oriented or non-oriented) has a polyhedral embedding into 
		$\mathbb{R}^5,$ since any triangulated surface
		occurs as a subcomplex of a cyclic $6$-polytope.
\end{itemize}

\medskip
Here, we present an additional realisation technique
using {\em normal surfaces}. Namely, we consider a
normal surface $S$ in the boundary complex $\partial P$ of 
the simplicial $4$--polytope $P$. The surface
$S$ can be realised in $\mathbb{R}^{3}$ by projecting 
$S \subset \partial P$ into a tetrahedron $\Delta \in \partial P,$
which is disjoint to $S$. If no such tetrahedron exist,
we can choose $\Delta$ to be a tetrahedron with the least number
of normal discs and push these discs out of $\Delta$
by inserting a small number of extra vertices in their exterior:
for any normal triangle near a vertex $v$ of $\Delta$
place an extra vertex above $v$ and cone over its
three normal arcs, see Figure~\ref{fig:subdivTrig}. 
For each quadrilateral, enlarge the quadrilateral around 
$\Delta$ by adding three extra vertices and cone over a fourth 
vertex to close the surface as shown in Figure~\ref{fig:subdivQuad}.
Since at most one quadrilateral type exist, arbitrarily many
normal pieces inside $\Delta$ can be embedded simultaneously
in this way.

\begin{figure}[h]
	\begin{center}
		\includegraphics[width=0.5\textwidth]{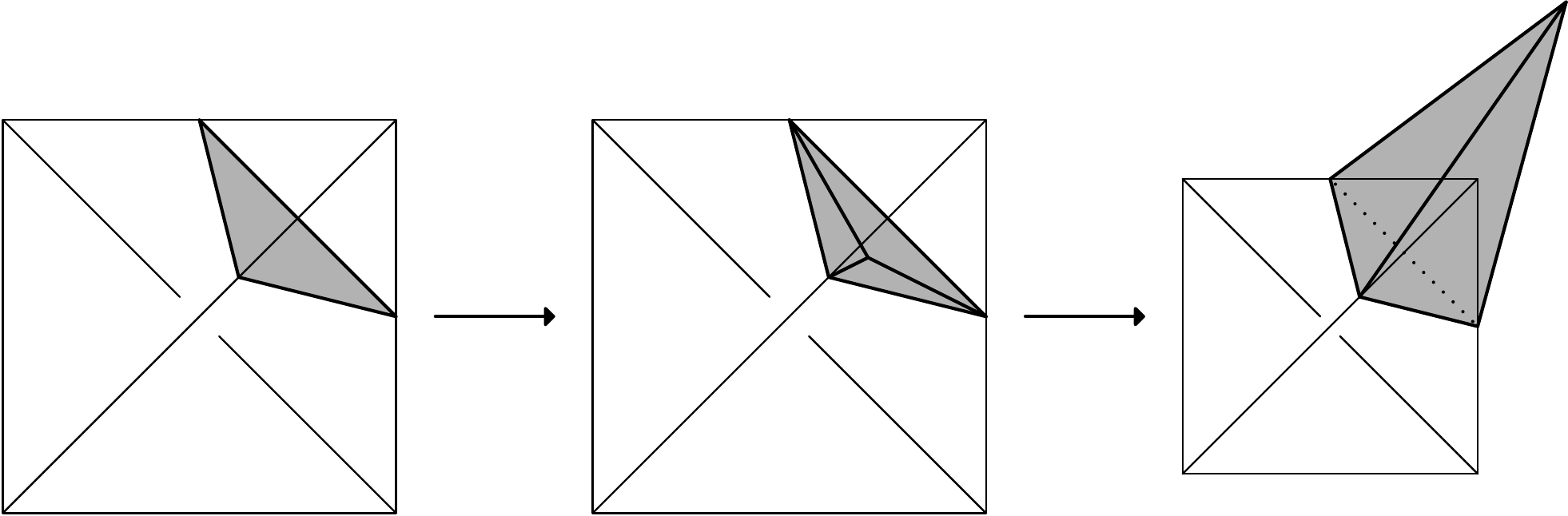}
	\end{center}
	\caption{Subdividing and embedding a normal triangle. \label{fig:subdivTrig}}
\end{figure}

\begin{figure}[h]
	\begin{center}
		\includegraphics[width=0.7\textwidth]{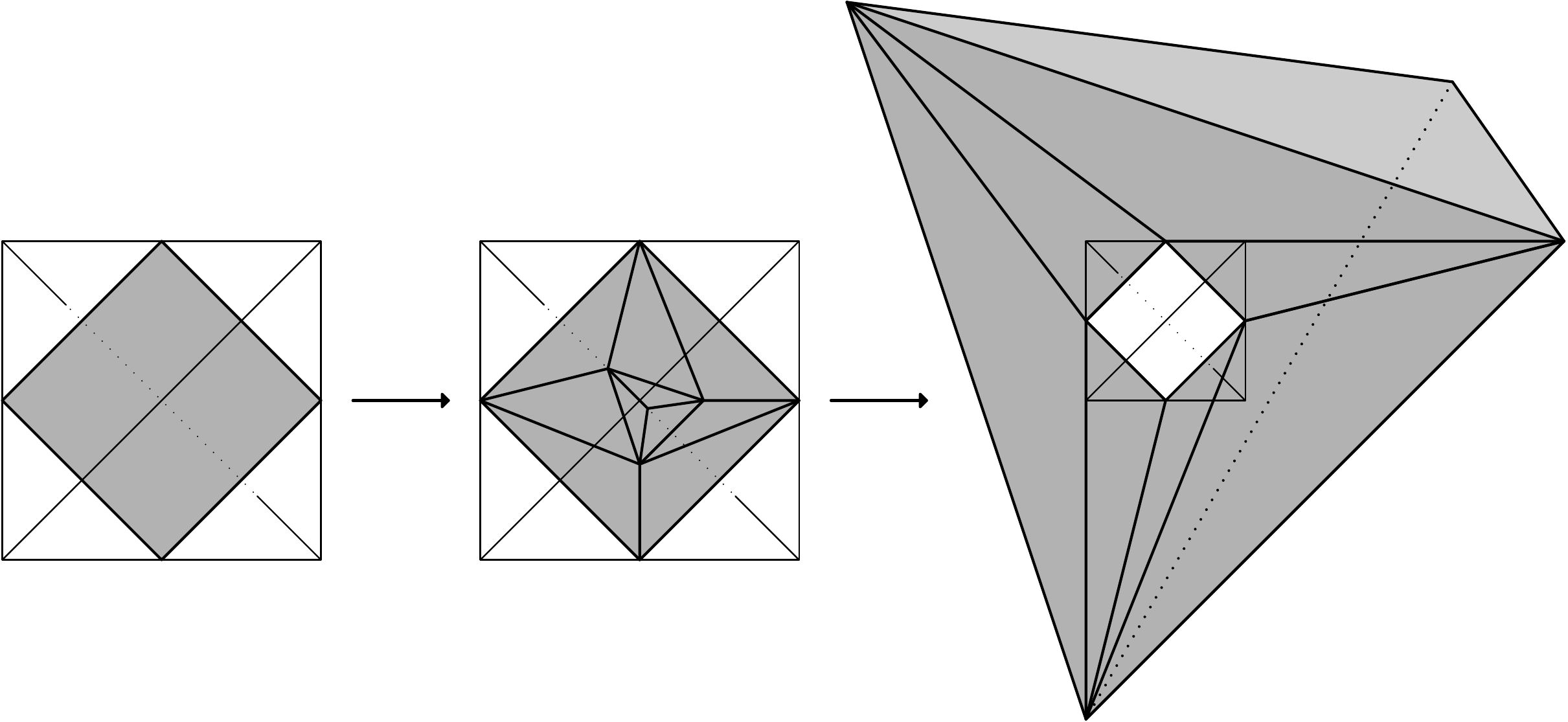}
	\end{center}
	\caption{Subdividing and embedding a normal quadrilateral. \label{fig:subdivQuad}}
\end{figure}

A normal surface where additional vertices have to be added will be referred to as \emph{nearly realisable}.
For instance, each Gale surface of Example~\ref{ex:GaleSli}
is contained in the boundary complex of the cyclic $4$--polytopes, and 
one needs to add four vertices in the interior of one quadrilateral disc. 
The Gale surfaces give a family of nearly realisable normal surfaces with 
increasing genus.

This new realisation technique applies to normal subsets of {\em all}
triangulations of $\mathbb{R}^3$ and thus a natural question
to ask is whether if it can be used to construct normal surfaces of
unusually high genus. We will use Corollary~\ref{cor:simplicial}
from Section~\ref{sec:genus bound} to show that 
this is in fact not possible.


\subsection{An obstruction to normal surfaces of unusually high genus}
\label{sec:slicings}

\begin{theorem}
	\label{thm:main}
	Let $M$ be a combinatorial 3--manifold and let $S \subset M$ be a closed, orientable normal surface.
	Then 
	$$ 2 g (S) < 7 f_0 (S). $$
\end{theorem}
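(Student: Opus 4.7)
The plan is to deduce the inequality from Corollary~\ref{cor:simplicial} combined with an Euler-type count on $S$ and a refinement of the spine argument in the proof of Corollary~\ref{cor:quad and chain bound} that exploits the vertex count $f_0(S)$.

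First, I would record the Euler identity. Writing $t$ for the number of normal triangles in $S$, one has $f_2(S) = q(S) + t$ and $2 f_1(S) = 4 q(S) + 3 t$, so $\chi(S) = 2 - 2g(S)$ simplifies to
\[
2 g(S) = q(S) + \tfrac{1}{2} t + 2 - f_0(S). \qquad (\star)
\]

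Second, I would apply Corollary~\ref{cor:simplicial} (a combinatorial 3-manifold is in particular simplicial) to obtain $2g(S) \le \tfrac{7}{3} q(S)$. For a second, vertex-sensitive bound, I would revisit the spine construction in the proof of Corollary~\ref{cor:quad and chain bound}: that argument produces a spine of $S$ with at most $q(S) + |\partial|$ edges, where $|\partial|$ counts the boundary components of the non-simply connected triangle regions. In a combinatorial 3-manifold the one-skeleton of $S$ has girth at least three, because two distinct edges of $M$ lie in at most one common face of $M$, so $S$ has no loops or bigons. Each chain of quadrilateral edges is a cycle in this one-skeleton and therefore contains at least three distinct vertices; chains from distinct boundary components of a single region, or from triangle regions at distinct vertices of $M$, are pairwise vertex-disjoint. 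Hence $3 |\partial| \le f_0(S)$, which gives the refined bound
\[
2g(S) \le q(S) + \tfrac{1}{3} f_0(S).
\]

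Third, I would combine the two bounds through $(\star)$. Substituting the refined spine bound into $(\star)$ yields $t \le \tfrac{8}{3} f_0(S) - 4$. Via $(\star)$ the goal $2g(S) < 7 f_0(S)$ rewrites as $2q(S) + t < 16 f_0(S) - 4$, and the upper bound on $t$ then reduces the problem to showing $q(S) < \tfrac{20}{3} f_0(S)$. A case split on whether $q(S) \le \tfrac{1}{4} f_0(S)$ completes the argument: in that case Corollary~\ref{cor:simplicial} alone gives $2g(S) \le \tfrac{7}{12} f_0(S)$; in the opposite case the bounds from Corollary~\ref{cor:simplicial} and the refined spine argument are combined with the simplicial structure of $M$ to conclude.

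The hardest step will be the complementary case $q(S) > \tfrac{1}{4} f_0(S)$: the two derived inequalities $2g(S) \le \tfrac{7}{3} q(S)$ and $2g(S) \le q(S) + \tfrac{1}{3} f_0(S)$ together with $(\star)$ do not force $q(S) < \tfrac{20}{3} f_0(S)$ from pure counting alone. To close the argument one must invoke additional structural features of $S$ in the combinatorial 3-manifold $M$—notably that any two quadrilaterals of $S$ share at most one edge, since two distinct tetrahedra of $M$ share at most one face—and this is what forces the strict inequality.
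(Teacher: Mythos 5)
Your proposal does not close the argument, and you essentially acknowledge this yourself in the final paragraph. The structure is: derive $(\star)$ (correct), invoke Corollary~\ref{cor:simplicial} (fine), derive a refined spine bound $2g(S)\le q(S)+\tfrac13 f_0(S)$, substitute into $(\star)$, and reduce the claim to $q(S)<\tfrac{20}{3}f_0(S)$. But you have no argument for this last inequality when $q(S)>\tfrac14 f_0(S)$; the vague appeal to ``any two quadrilaterals of $S$ share at most one edge'' is not a proof, and the two inequalities you have genuinely do not force a linear bound of $q$ in terms of $f_0$ (both have $q$ on the majorizing side, so they never constrain $q$ from above). This is the missing idea: you need an independent bound of the form $q(S)<c\,f_0(S)$, and nothing in your proposal produces one.

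There is also a secondary issue in the refined spine bound itself: the claim that the boundary chains of non-simply connected triangle regions are pairwise vertex-disjoint is not obviously true. Two such chains can share a vertex of $S$ lying on an edge $\langle v_1,v_2\rangle$ of $M$ when one chain bounds the region linking $v_1$ and the other bounds the region linking $v_2$; and distinct boundary circles of a compact core of a single region can likewise meet in $S$, since a triangle region is a $2$--complex rather than an embedded surface. So $3|\partial|\le f_0(S)$ would need a real argument.

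The paper closes the gap with a different idea. It introduces the \emph{vista} $C_x(S)$ of $S$ from the vertex link $V_x$ (Definition~\ref{def:vista}): the union of all normal arcs in the quadrilateral subcomplex of $S$ that are normally isotopic into $V_x$. Lemma~\ref{lem:numEdges} projects each component of a vista to a simple planar graph in $V_x$ (simplicity is exactly where the combinatorial-$3$--manifold hypothesis enters) to get $|C_x(S)|<3f_0(C_x(S))$, and a double count---each vertex of the quadrilateral subcomplex lies in exactly two vistas, and the vistas have disjoint edge sets covering at least $2q$ edges---gives $q(S)<3f_0(S)$. Feeding this into $6g\le 7q$ from Corollary~\ref{cor:simplicial} yields $2g(S)<7f_0(S)$. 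Note this gives a bound on $q$ in terms of $f_0$ that is both linear and sharp enough, which is precisely the step your counting with $(\star)$ cannot supply.
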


\begin{remark}
The theorem in particular applies with $M = \partial P,$ where $P$ is a simplicial $4$-polytope, and $S \subset \partial P$ an orientable normal surface in the boundary complex of $P$.
\end{remark}

\begin{definition}[(vista from vertex link)]
	\label{def:vista}
	Let $M$ be a compact, triangulated $3$--manifold with vertex set $V.$ Let $x \in V$ and denote $V_x$ the normal surface in $M$ linking $x.$ Suppose $S \subset M$ is a normal surface. 
	Then the \emph{vista $C_x (S)$ of $S$ from $V_x$} is the union of all normal arcs contained in the quadrilateral subcomplex of $S,$ which are normally isotopic into $V_x.$
	\end{definition}
	
As an example of the definition, consider the normal sphere $S$ linking the edge $\langle x, y \rangle$ in a combinatorial 3--manifold. The vistas $C_x (S)$ and $C_y (S)$ are topological circles, whilst any other vista is either empty or an interval. The following lemma is proven by drawing a picture of the vista on the vertex link.

\begin{lemma}\label{lem:numEdges}
Let $M$ be a combinatorial 3--manifold with vertex set $V$ and $x \in V.$ Suppose $S \subset M$ is a normal surface in $M.$ If $C_x (S)$ is non-empty, then the number of edges in $C_x (S)$ is strictly less than $3$ times the number of vertices in $C_x (S)$.
\end{lemma}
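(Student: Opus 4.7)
The plan is to realise $C_x(S)$ as a simple graph embedded on the sphere $V_x\cong S^2$ and then invoke the classical bound for simple planar graphs.

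First I would verify that $C_x(S)$ is simple. Since $M$ is a combinatorial (hence simplicial) $3$--manifold, every $2$--simplex of $M$ has three distinct vertices. An arc of $C_x(S)$ lying in a face $\{x,y,z\}$ has endpoints on the two distinct edges $\langle x,y\rangle$ and $\langle x,z\rangle$, hence is not a loop. Two arcs sharing both endpoints would force the two edges at $x$ in their respective faces to coincide, hence the faces themselves to coincide; but two distinct arcs in a single face are parallel with pairwise distinct endpoints on each incident edge.

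Next I would draw $C_x(S)$ on $V_x$. Recall that the vertices of $V_x$ correspond to edges of $M$ incident to $x$, and the edges of $V_x$ correspond to faces of $M$ containing $x$. I would expand each vertex $v_y$ of $V_x$ slightly to a small embedded arc and place on it, in order, the vertices of $C_x(S)$ lying on $\langle x,y\rangle$, using the linear order inherited from that edge. Each arc of $C_x(S)$ in a face $\{x,y,z\}$ is then drawn as a parallel copy of the edge $v_yv_z$ of $V_x$, joining the appropriate endpoints on the expansions of $v_y$ and $v_z$. Parallel arcs in the same face do not cross. The main obstacle is verifying planarity at each expanded vertex: this follows from the fact that the $0$--cells of $S_\square$ on $\langle x,y\rangle$ carry a single linear order intrinsic to that edge, which is respected by the parallel bundles arriving from all adjacent edges of $V_x$, so no braiding occurs.

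Given this planar embedding, the conclusion is immediate. Letting $n$ and $m$ denote respectively the numbers of vertices and edges of $C_x(S)$, if $n\ge 3$ then Euler's formula on the sphere combined with simpleness gives $m\le 3n-6<3n$. The small cases follow from simpleness as well: $n=1$ forces $m=0<3$, while $n=2$ forces $m\le 1<6$. Since $C_x(S)$ is non-empty, $n\ge 1$, so the desired bound $m<3n$ holds in every case.
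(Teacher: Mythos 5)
You take a genuinely different route from the paper. The paper works with a connected component $G$ of $C_x(S)$, uses the normal homotopy to project $G$ onto a graph $G'$ inside the one-skeleton of $V_x$, observes that $G'$ is simple and planar because $M$ is a combinatorial $3$--manifold, and then deduces $e(G)<3v(G)$ from $e(G')<3v(G')$ by a fibre count: the number of parallel arcs over an edge of $G'$ is bounded by the number of vertices over either of its endpoints. You instead try to show that $C_x(S)$ is \emph{itself} a simple planar graph and apply Euler's inequality directly to it, which is a strictly stronger geometric claim than the paper uses.

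Your simplicity argument is correct. The gap is in the planarity step. You identify the right obstacle (``verifying planarity at each expanded vertex'') but dismiss it too quickly. The consistency you actually verify is only a \emph{within-bundle} statement: for a single face $\{x,y,z\}$ the parallel arcs cutting off $x$ are ordered compatibly with the linear order of their endpoints on $\langle x,y\rangle$. This does not by itself rule out crossings in the blow-up, because the bundles from the several faces around $\langle x,y\rangle$ arrive in \emph{cyclic} order, and a priori nothing stops two points $p_1<p_2$ on $\langle x,y\rangle$ from having arcs to cyclically interleaved sets of faces (e.g.\ $p_1$ to $f_1,f_3$ and $p_2$ to $f_2,f_4$), which would force a crossing at the expanded vertex. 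Such interleaving is in fact impossible, but the reason is a nontrivial monotonicity: if $p_1$ is closer to $x$ than $p_2$ on $\langle x,y\rangle$ and the normal arc of $S$ at $p_2$ in a face $f$ cuts off $x$, then so does the arc at $p_1$ in $f$ (otherwise the two disjoint arcs of $S$ in $f$ would have to cross). Without stating and using this fact, ``no braiding occurs'' is an unsupported assertion, and the planar embedding is not established. This is precisely the difficulty the paper's projection-and-count argument sidesteps: it only needs the (immediate) planarity and simplicity of the image $G'\subset V_x^{(1)}$, never of $C_x(S)$.
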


\begin{proof}
Let $G$ be a connected component of $C_x (S).$ Then $G$ is a graph, and there is a normal homotopy taking it to a graph $G'$ in $V_x.$ Denote $p\co G \to G'$ the natural projection map. 
Now $G'$ is a planar graph, and since $M$ is a combinatorial 3--manifold, $G'$ is simple. So $G'$ is either a tree or has at least three vertices. In either case, 
$e(G')< 3v(G').$ If the pre-image of some edge of $G'$ contains $k$ edges in $G,$ then the pre-image of each of its endpoints contains at least $k$ vertices in $G$ (possibly more). In particular, for each vertex $v$ of $G',$ $|p^{-1}(v)| \ge \max \{ |p^{-1}(e)|\},$ where the maximum is taken over all edges $e$ incident with $v.$
Whence we also have $e(G)< 3v(G).$
%
\end{proof}

\begin{proof}[Proof of Theorem~\ref{thm:main}]
%
	Each vertex $v$ of the quadrilateral subcomplex of $S$ lies in exactly two vistas, being the two endpoints of
	the edge $S$ intersects in $v$.
	Whence 
	$$\sum \limits_{x \in V}  f_0( C_x (S) ) \le 2f_0 (S).$$

	By definition, no two vistas have an edge in common. Since there are at least $2q$ edges in the quadrilateral subcomplex of $S,$ we have
	$$ | \bigcup \limits_{x \in V}  C_x (S) | = 
	\sum \limits_{x \in V}  | C_x (S) | \ge 2 q . $$
	By Lemma~\ref{lem:numEdges}, we have
	$$ 3 f_0 (C_x (S)) > |C_x (S) |, $$
	and altogether, evoking Corollary~\ref{cor:simplicial}, we have
	\begin{eqnarray}
		6 g (S) 	&\leq&	7 q(S) \nonumber \\
			&\leq&	\frac{7}{2} \sum \limits_{x \in V}  | C_x (S) | \nonumber \\
			&<&	\frac{7}{2}\cdot 3 \cdot \sum \limits_{x \in V}  f_0( C_x (S) ) \nonumber \\
			&\leq&	7 \cdot 3 \cdot f_0 (S).\nonumber
	\end{eqnarray}
	This proves the result.
\end{proof}

\begin{remark}
	By \cite[Conjecture 4.6]{Spreer10NormSurfsCombSlic} 
	the inequality of Theorem~\ref{thm:main} would even change to
	$$ g (S) < f_0 (S) $$
	which is fairly close to the values of the Gale surfaces of 
	Example~\ref{ex:GaleSli}.
\end{remark}

\begin{remark}
	For each Gale surface of Example~\ref{ex:GaleSli} 
	its genus is close to the maximum genus possible for a (simplicial) normal 
	surface with the same number of vertices. 
	Since it is embedded in the boundary complex of a $4$--polytope, we see that
 	in the framework of normal surfaces, polyhedral 
	realisation into $\mathbb{R}^3$
	does not seem to invoke any significantly stricter 
	constraints than the existence 
	of a polyhedral embedding into an 
	arbitrary combinatorial $3$--manifold.
\end{remark}
	
\begin{question}
	Do similar observations hold for the
	polytopal subcomplex method? In particular,
	is it more difficult to find
	surfaces of unusually high genus
	in the $2$--skeleton of $4$--polytopes than	
	in the $2$--skeleton of an arbitrary combinatorial
	$3$--manifold?
\end{question}

%
%
%
%



\section{Examples}
\label{sec:Examples}

	\begin{figure}[h]
		\begin{center}
			\includegraphics[width=.8\textwidth]{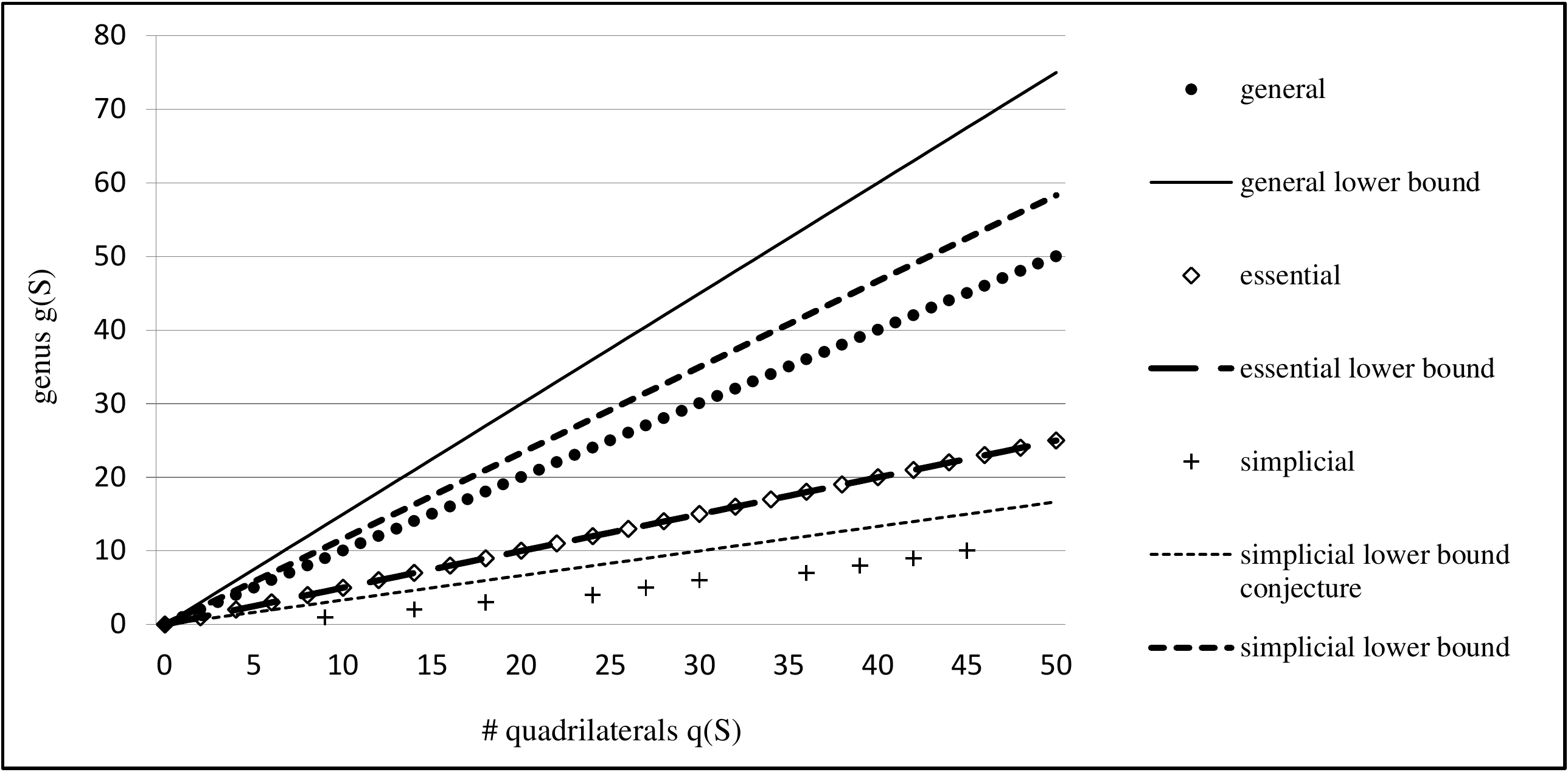}
		\end{center}
		\caption{Lower bounds and examples for simplicial, essential and general normal surfaces.
		\label{fig:graph}} 
	\end{figure}

In the following we will provide an extended set of examples certifying that most bounds presented in Section~\ref{sec:genus bound}
are in fact sharp. For an overview over the lower bounds in the simplicial, the essential and
the general case as well as the currently known best examples for genus $g(S) \leq 10$ see Figure~\ref{fig:graph}.

\subsection{Quadrilateral surfaces}

\begin{example}[(A Heegaard torus in $S^3$)]
	\label{ex:quadEx1}
	Lemma~\ref{lem:inequ for quad surf} is sharp for $v=1$ and genus $1.$ To see this consider the $1$--quadrilateral torus 
	in the $1$--tetrahedron triangulation of the $3$-sphere
	shown in Figure~\ref{fig:oneQuadTorus}. 
\end{example}	

\begin{figure}[h]
		\begin{center}
			\includegraphics[width=0.3\textwidth]{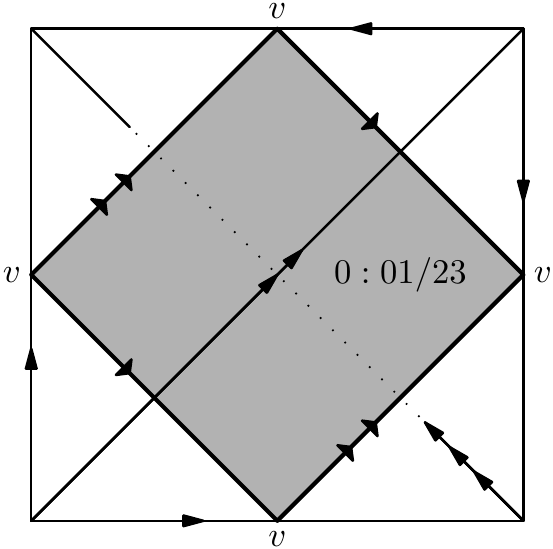}
		\end{center}
		\caption{An edge link in the $1$-tetrahedron triangulation
		of the $3$-sphere---a torus consisting of a single quadrilateral. 
		\label{fig:oneQuadTorus}}
	\end{figure}

\begin{example}[(Quadrilateral surfaces with two vertices)]
	\label{ex:quadEx2}
	Lemma~\ref{lem:inequ for quad surf} is also sharp for $v=2$ and arbitrary genus. 
	For the $2$-vertex case consider the following family of 
	$2 g$-tetrahedra $3$-vertex $3$-spheres $\mathscr{B}_g$, $g \geq 2$, given
	by the following gluing table; here $i(abc)$ in row $j$, column $(def)$ means that triangle 
	$(def)$ of tetrahedron $j$ is glued
	to triangle $(abc)$ of tetrahedron $i$. 
	Each of the $\mathscr{B}_g$, $g \geq 2$, contains a 
	quadrangulated genus $g$ splitting surface with only two vertices 
	for all $g \geq 2$.
	The splitting surface is given by one quadrilateral per tetrahedron
	each separating vertices $0$ and $2$ from $1$ and $3$
	and is shown in Figure~\ref{fig:quadEx}. 

	\medskip
	\begin{center}
	\begin{tabular}{|r|r|r|r|r|}
		\hline	
		tetrahedron&face $(012)$&face $(013)$&face $(023)$&face $(123)$\\
		\hline	
		\hline	
		$0$&$0(032)$&$1(013)$&$0(021)$&$1(123)$\\
		\hline	
		$1$&$2(012)$&$0(013)$&$2(023)$&$0(123)$\\
		\hline	
		$2$&$1(012)$&$\ldots$&$1(023)$&$\ldots$\\
		\hline	

		$\ldots$&$\ldots$&$\ldots$&$\ldots$&$\ldots$\\
		$\ldots$&$\ldots$&$\ldots$&$\ldots$&$\ldots$\\
		\hline	

		$2k-1$&$2k(012)$&&$2k(023)$&\\
		\hline	
		$2k$&$2k-1(012)$&$2k+1(013)$&$2k-1(023)$&$2k+1(123)$\\
		\hline	
		$2k+1$&$\ldots$&$2k(013)$&$\ldots$&$2k(123)$\\
		\hline	

		$\ldots$&$\ldots$&$\ldots$&$\ldots$&$\ldots$\\
		$\ldots$&$\ldots$&$\ldots$&$\ldots$&$\ldots$\\
		\hline	

		$2g-3$&$2g-2(012)$&$\ldots$&$2g-2(023)$&$\ldots$\\
		\hline	
		$2g-2$&$2g-3(012)$&$2g-1(013)$&$2g-3(023)$&$2g-1(123)$\\
		\hline	
		$2g-1$&$2g-1(032)$&$2g-2(013)$&$2g-1(021)$&$2g-2(123)$\\
		\hline	
	\end{tabular}
	\end{center}

	\begin{figure}[h]
		\begin{center}
			\includegraphics[width=0.8\textwidth]{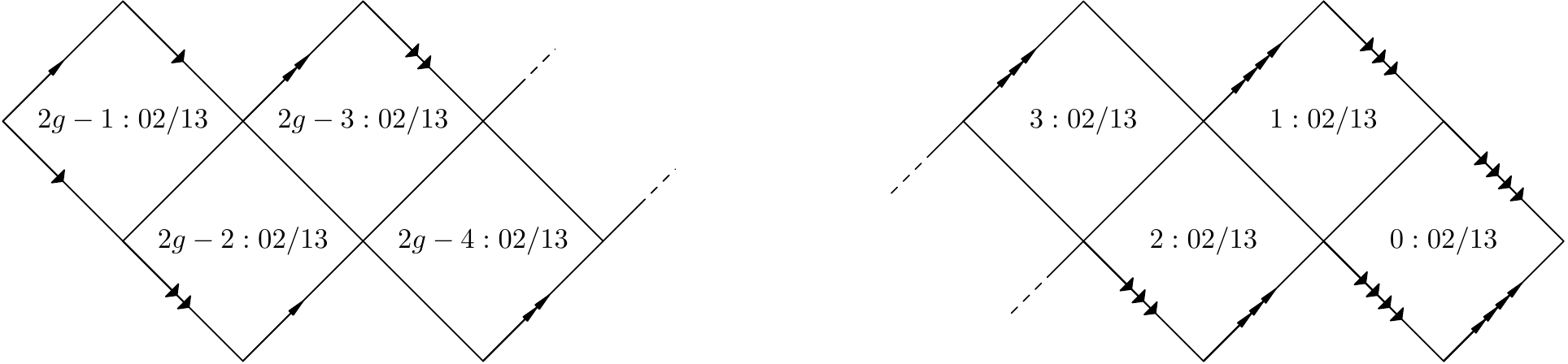}
		\end{center}
		\caption{Genus $g$ splitting surface in $\mathscr{B}_g$
		with only $2g$ quadrilaterals. \label{fig:quadEx}}
	\end{figure}
\end{example}

\begin{example}[(Gale surfaces)]
	\label{ex:GaleSli}
	The following construction describes a family of simplicial triangulations of 
	the 3--sphere containing interesting quadrilateral surfaces.
	The cyclic $4$--polytope $C_4(n)$ with vertex set 
	$V = \{ 0 , 1 , \ldots , n-1 \}$ is neighbourly, i. e. 
	it contains all possible
	${n \choose 2}$ edges. 
	On the other hand, by Gale's 
	evenness condition, the span of all odd vertices 
	$V_1 = \{ 1, 3, \ldots , n-1 \}$ 
	as well as the span of all even vertices 
	$V_2 = \{ 0, 2, \ldots , n-2 \}$ both have dimension $1$. 
	Altogether, the normal surface $G_n$ separating
	$V_1$ from $V_2$ slices each 
	of the ${n \choose 2} - n$ tetrahedra in the boundary of the polytope in a quadrilateral
	and has genus ${\frac{n}{2}-1 \choose 2}$.
	As a consequence, $G_n$ has $\frac{n^2}{4}$vertices and the $f$-vector
	\small
		$$ f(G_n) = \left (\frac{n^2}{4}, 2 \left ( {n \choose 2} - n \right ), 
		{n \choose 2} - n \right ).$$
	\normalsize
	Whence
	\small
		$$ g(G_n) = \frac{n^2}{8} - \frac{3n}{4} + 1 = 
		\frac{f_0 (G_n)}{2} - \frac{3 \sqrt{f_0 (G_n)}}{2} + 1. $$
	\normalsize
	We call the splitting surface $G_n$ a {\em Gale surface} 
	(cf. \cite{Spreer10NormSurfsCombSlic}). It has the maximum genus 
	with respect to $n$ and linear genus with a relatively 
	large constant with respect to $f_0 (G_n)$. Furthermore, it is
	polyhedrally realisable in $\mathbb{R}^3$ by just adding four vertices in one quadrilateral disc
	(cf. Section~\ref{sec:polReal}).
\end{example}

\begin{example}[(Simplicial triangulations)]
	\label{ex:simplicial}
	Given an $n$-vertex simplicial $3$-manifold triangulation $M$ containing a complete graph $K_m$
	on $m$ vertices $m < n$ such that each edge in $K_m$ is of degree three in $M$. Then the boundary
	of a small neighborhood of $K_m$ is a normal surface $S$ of genus $g(S) = {m-1 \choose 2}$ consisting of 
	$q(S) = 3 {m \choose 2}$ quadrilaterals and possibly a large number of triangles. Given an arbitrarily
	large number of vertices such a simplicial triangulationn $M$ is easy to construct: take a collection of 
	cones over 
	simplicial $2$-sphere triangulations with sufficiently many degree three vertices and pairwise join these cones
	together by identifying the star around a degree three edge, and finally closing off the resulting
	complex. Note that this family can also be extended
	to all values $g \geq 0$, $g(S) \neq {m-1 \choose 2}$, by slightly modifying $M$. All together this results in 
	in normal surfaces $S$ with
	$$ q(S) = 3 g(S) + O(\sqrt{g(S)}) .$$
	Despite an extended computer search using the classification of simplicial $3$-manifold triangulations 
	with few vertices \cite{Altshuler74CombMnfFewVert,Lutz08ManifoldPage} and
	the library of $3$-manifold triangulations contained in the {\em GAP}-package {\em simpcomp} \cite{simpcomp} 
	these examples of normal surfaces 
	in simplicial triangulations have the least number of quadrilaterals for fixed $g \neq 2$ amongst all
	known examples. In the case $g=2$ there is an $8$-vertex $3$-manifold triangulation of $S^3$ containing
	a normal surface with only $14$ quadrilaterals where the above method would result in $15$ quadrilaterals.

	It is thus conjectured that for normal surfaces in simplicial triangulations
	$$ q(S) \geq 3 g(S) $$
	holds \cite{Spreer10NormSurfsCombSlic}.
\end{example}

\subsection{Fundamental normal surfaces}

\begin{example}[($3q=2g$)]
	\label{ex:genleq3}
	Theorem~\ref{thm:genus} is sharp for $g=0$ and  $g=3,$ and hence a sharp linear bound of genus in terms of quadrilateral discs.
\begin{itemize}
\item[]
	{\bf $g=0$}: For this case consider any vertex linking sphere in a closed 3--manifold.
\item[]
	{\bf $g=3$}: The $6$-tetrahedra triangulation of the $3$-sphere
	given by below gluing table contains a genus $3$ vertex normal surface (in standard
	coordinates) with only two quadrilaterals, which is
	shown in Figure~\ref{fig:genus3}. 

	\medskip
	\begin{center}
	\begin{tabular}{|c|c|c|c|c|}
		\hline	
		tetrahedra&face $(012)$&face $(013)$&face $(023)$&face $(123)$\\
		\hline	
		\hline	
		$0$&$0(301)$&$0(120)$&$2(023)$&$1(123)$\\
		\hline	
		$1$&$3(012)$&$2(103)$&$2(123)$&$0(123)$\\
		\hline	
		$2$&$4(012)$&$1(103)$&$0(023)$&$1(023)$\\
		\hline	
		$3$&$1(012)$&$5(013)$&$4(123)$&$4(023)$\\
		\hline	
		$4$&$2(012)$&$5(203)$&$3(123)$&$3(023)$\\
		\hline	
		$5$&$5(231)$&$3(013)$&$4(103)$&$5(201)$\\
		\hline	
	\end{tabular}
	\end{center}	

	\begin{figure}[h]
		\begin{center}
			\includegraphics[width=0.8\textwidth]{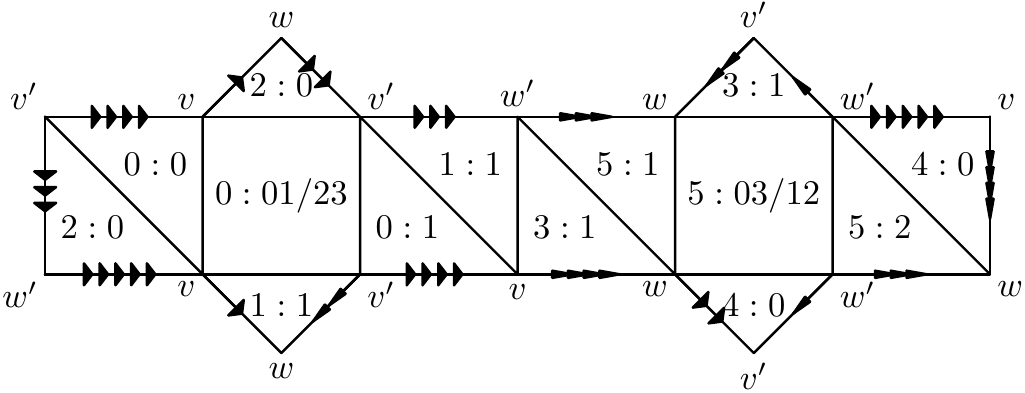}
		\end{center}
		\caption{A genus $3$ quadrilateral fundamental normal 
		surface with only $2$ quadrilaterals. \label{fig:genus3}}
	\end{figure}

\end{itemize}
\end{example}

\begin{example}[($q=g$)]
	\label{ex:q=g}
	For $g=1$ and $g=2$ there exist examples of orientable normal surfaces with the {\em minimal}
	number of quadrilaterals with respect to Theorem~\ref{thm:genus}.

\begin{itemize}
\item[]
	{\bf $g=1$}: For this case consider the $1$-quadrilateral
	torus inside the $1$-tetrahedron triangulation of the
	$3$-sphere from Example~\ref{ex:quadEx1}.
\item[]
	{\bf $g=2$}: There is a $4$-tetrahedra $0$-efficient 
	$3$-sphere given by the following gluings. 

	\medskip
	\begin{center}
	\begin{tabular}{|c|c|c|c|c|}
		\hline	
		tetrahedra&face $(012)$&face $(013)$&face $(023)$&face $(123)$\\
		\hline	
		\hline	
		$0$&$3(012)$&$1(013)$&$2(023)$&$1(123)$\\
		\hline	
		$1$&$3(013)$&$0(013)$&$2(013)$&$0(123)$\\
		\hline	
		$2$&$3(231)$&$1(023)$&$0(023)$&$3(023)$\\
		\hline	
		$3$&$0(012)$&$1(012)$&$2(123)$&$2(201)$\\
		\hline	
	\end{tabular}
	\end{center}
The triangulation contains a genus $2$ fundamental normal 
	surface in quadrilateral coordinates with only two quadrilaterals 
	which is shown in Figure~\ref{fig:genus2}.
\end{itemize}

	\begin{figure}[h]
		\begin{center}
			\includegraphics[width=0.6\textwidth]{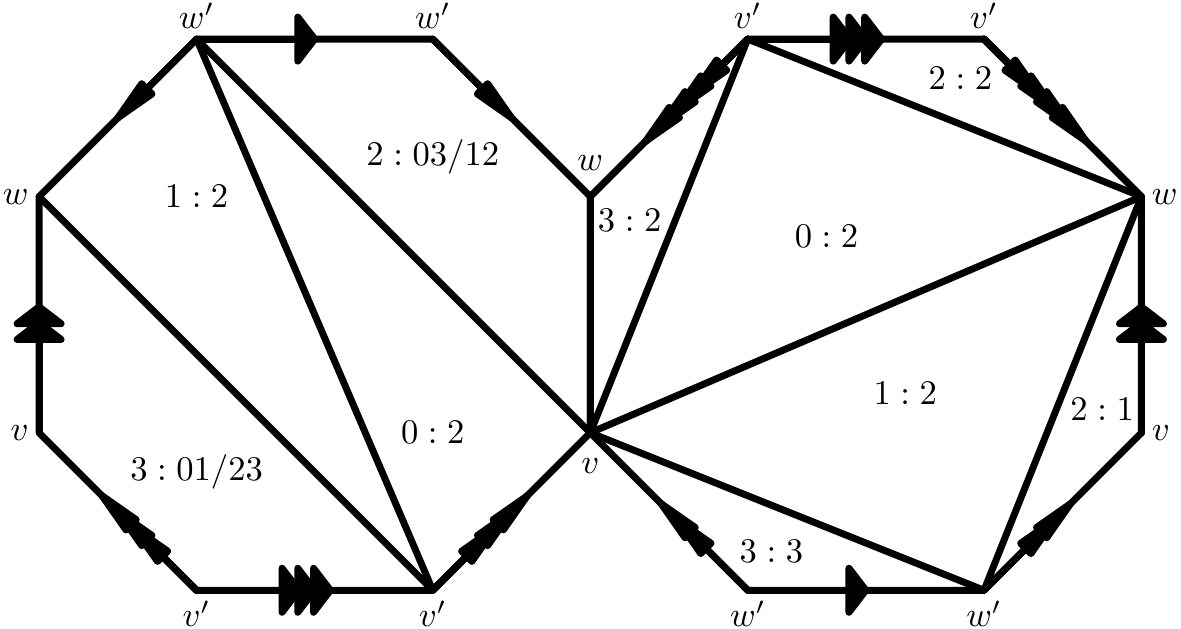}
		\end{center}
		\caption{A genus $2$ quadrilateral fundamental normal 
		surface with only $2$ quadrilaterals. \label{fig:genus2}}
	\end{figure}

\end{example}

\begin{example}[(Non-orientable normal surface)]
	\label{ex:nonor}
	The $5$-terahedra triangulation of $S^2 \times S^1$
	given by the gluing table

	\medskip
	\begin{center}
	\begin{tabular}{|c|c|c|c|c|}
		\hline	
		tetrahedra&face $(012)$&face $(013)$&face $(023)$&face $(123)$\\
		\hline	
		\hline	
		$0$&$0(013)$&$0(012)$&$2(023)$&$1(123)$\\
		\hline	
		$1$&$4(012)$&$4(013)$&$3(023)$&$0(123)$\\
		\hline	
		$2$&$4(230)$&$4(231)$&$0(023)$&$3(123)$\\
		\hline	
		$3$&$3(013)$&$3(012)$&$1(023)$&$2(123)$\\
		\hline	
		$4$&$1(012)$&$1(013)$&$2(201)$&$2(301)$\\
		\hline	
	\end{tabular}
	\end{center}

	\medskip
	contains a non-orientable vertex normal surface of
	Euler characteristic $-2$ with only $1$ quadrilateral.
	The surface is shown in Figure~\ref{fig:onequadgenus2}.
	Note that taking the orientable double cover
	of this surface yields a $2$ quadrilateral 
	orientable surface of Euler characteristic $-4$, and
	hence another sharp example for
	Theorem~\ref{thm:genus}.

	\begin{figure}[h]
		\begin{center}
			\includegraphics[width=0.3\textwidth]{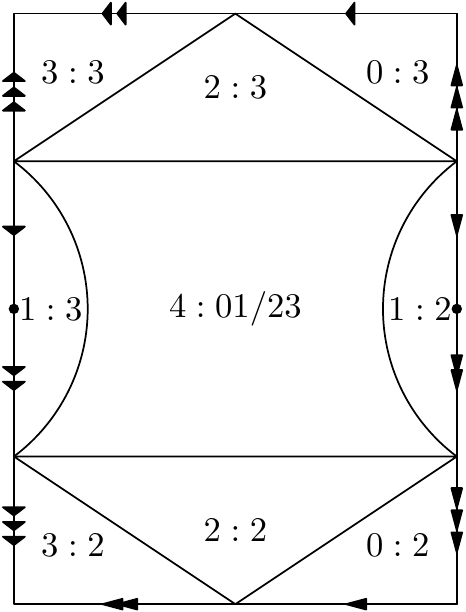}
		\end{center}
		\caption{A $1$ quadrilateral non-orientable normal surface 
		of Euler characteristic $-2$. \label{fig:onequadgenus2}}
	\end{figure}
\end{example}

\begin{example}[($g=q$)]
	\label{ex:family}
	The family of triangulations $\mathscr{A}_n$ given in 
	\cite{Burton12CompTopNormSurfExperiments} provides 
	examples of normal surfaces of arbitrary $g \geq 1$
	each with few quadrilaterals with respect to their genus.
	It consists of a cycle of $n$ tetrahedra, each identified 
	with itself around a degree one edge and 
	joined to each other along the remaining triangles 
	such that all vertices become identified.
	For all $n \geq 1$, the triangulation $\mathscr{A}_n$ 
	is a $1$--vertex $0$--efficient triangulation of the $3$--sphere.

	The triangulation $\mathscr{A}_n$ contains 
	exactly ${n \choose k}$ genus $k$ normal surfaces, 
	each having $k$ quadrilaterals dual to $k$ of 
	the $n$ edges of degree one. 
	In particular, there is a genus $n$ normal surface 
	containing exactly $n$ quadrilaterals.

	In standard coordinates all of these surfaces are 
	fundamental normal surfaces. However, the $n$ quadrilateral genus $n$ 
	normal surface is the sum 
	of $n$ fundamental tori minus $(n-1)$ copies of the vertex link; so it is not fundamental in $Q$--coordinates.
\end{example}

%
%
%

\begin{example}[(Normal surface with boundary)]
	\label{ex:bounded}
	There is a $4$-tetrahedra triangulation of the $3$-ball containing a 
	$2$-punctured torus with only one quadrilateral and thus 
	an example of equality in Corollary~\ref{cor:boundedSurfs}
	($g = 1$, $b = 2$, and $q = 1$).
	The triangulation
	is given by the following gluing table, where $\partial$ denotes a
	triangle in the boundary. The $2$-punctured torus is shown in 
	Figure~\ref{fig:bounded}.

	\medskip
	\begin{center}
	\begin{tabular}{|c|c|c|c|c|}
		\hline	
		tetrahedra&face $(012)$&face $(013)$&face $(023)$&face $(123)$\\
		\hline	
		\hline	
		$0$&$0(013)$&$0(012)$&$\partial$&$1(123)$\\
		\hline	
		$1$&$3(012)$&$3(013)$&$2(023)$&$0(123)$\\
		\hline	
		$2$&$2(013)$&$2(012)$&$1(023)$&$\partial$\\
		\hline	
		$3$&$1(012)$&$1(013)$&$3(312)$&$3(230)$\\
		\hline	
	\end{tabular}
	\end{center}

	\begin{figure}[h]
		\begin{center}
			\includegraphics[width=0.6\textwidth]{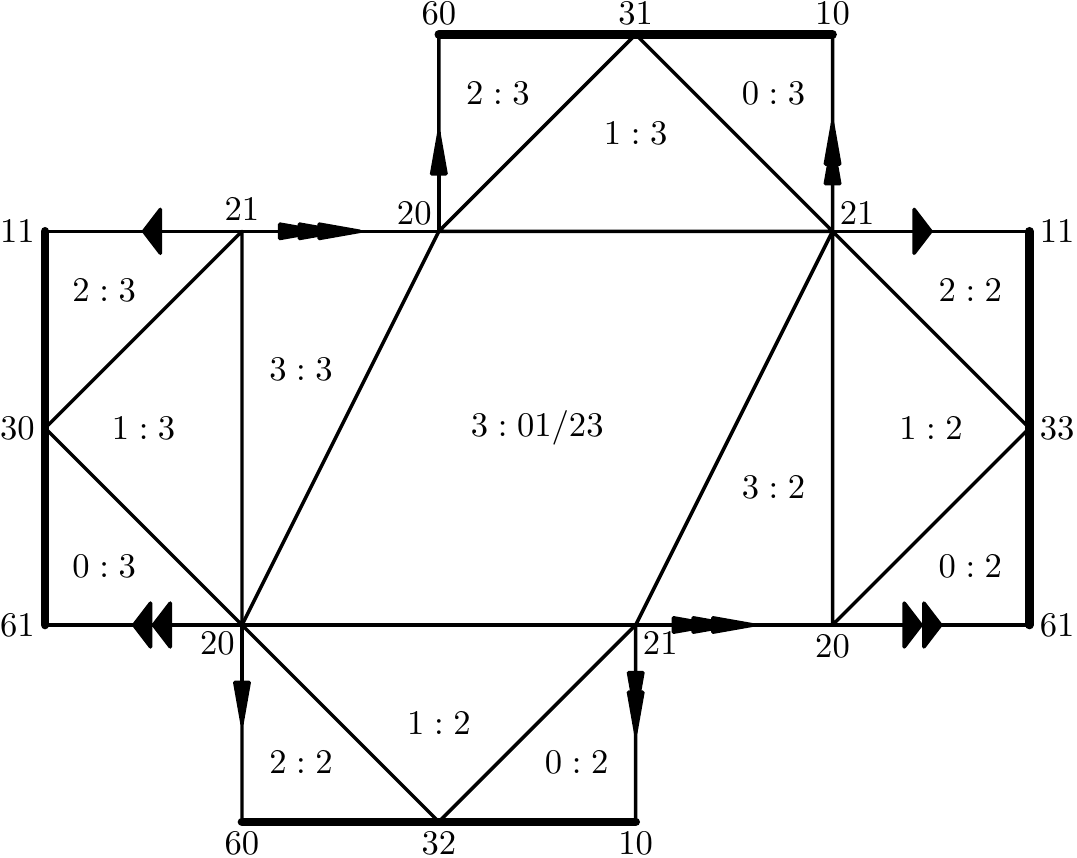}
		\end{center}
		\caption{A $2$-punctured torus ($b=2$, $g=1$) with only one quadrilateral.
		\label{fig:bounded}}
	\end{figure}
\end{example}

\begin{example}[(A non-trivial Haken sum with $g=5$ and $q=4$)]
	\label{ex:hakenSum}
	Consider the following $8$-tetrahedra triangulation $M$ of $S^2 \times S^1$:

	\medskip
	\begin{center}
	\begin{tabular}{|c|c|c|c|c|}
		\hline	
		tetrahedra&face $(012)$&face $(013)$&face $(023)$&face $(123)$\\
		\hline	
		\hline	
		$0$&$3(012)$&$1(013)$&$2(023)$&$1(123)$\\
		\hline	
		$1$&$4(132)$&$0(013)$&$4(023)$&$0(123)$\\
		\hline	
		$2$&$3(032)$&$5(013)$&$0(023)$&$3(321)$\\
		\hline	
		$3$&$0(012)$&$6(013)$&$2(021)$&$2(321)$\\
		\hline	
		$4$&$7(120)$&$7(013)$&$1(023)$&$1(021)$\\
		\hline	
		$5$&$6(032)$&$2(013)$&$6(021)$&$7(320)$\\
		\hline	
		$6$&$5(032)$&$3(013)$&$5(021)$&$7(123)$\\
		\hline	
		$7$&$4(201)$&$4(013)$&$5(321)$&$6(123)$\\
		\hline	
	\end{tabular}
	\end{center}

	\medskip
	$M$ contains a one quadrilateral non-orientable surface $\mathbf{s}_1$ of non-orientable genus
	$4$ with coordinates 
	\small
	$$	\begin{array}{rcl}
		\mathbf{s}_1&=(&(0, 0, 0, 0;0, 1, 0),(1, 0, 1, 0;0, 0, 0),(0, 0, 0, 1;0, 0, 0),(0, 1, 0, 0;0, 0, 0),\\
		&& (1, 1, 1, 0;0, 0, 0),(0, 0, 1, 1;0, 0, 0),(0, 1, 1, 0;0, 0, 0),(1, 1, 1, 0;0, 0, 0) \quad), 
		\end{array}
	$$
	\normalsize
	as well as a two quadrilateral orientable surface $\mathbf{s}_2$ of genus $3$
	with coordinates
	\small
	$$	\begin{array}{rcl}
		\mathbf{s}_2&=(&(0, 1, 0, 1;0, 1, 0),(1, 1, 1, 1;0, 0, 0),(0, 0, 0, 2;0, 0, 0),(0, 2, 0, 0;0, 0, 0), \\
			&& (1, 1, 1, 1;0, 0, 0),(0, 0, 0, 2;0, 0, 0),(0, 2, 0, 0;0, 0, 0),(1, 1, 0, 0;1, 0, 0) \quad).
		\end{array}
	$$
	\normalsize
	Hence, both $\mathbf{s}_1$ and $\mathbf{s}_2$ attain equality in Corollary~\ref{for:nonorSurfs}
	and Theorem~\ref{thm:genus} respectively. Moreover, $\mathbf{s}_1$ and $\mathbf{s}_2$ are
	compatible, non-disjoint and the sum of the orientable double cover of $\mathbf{s}_1$
	together with $\mathbf{s}_2$ is connected. It follows that $2 \mathbf{s}_1 + \mathbf{s}_2$
	is a non-trivial example for which the bound from Corollary~\ref{cor:genus Haken sum}
	is sharp.

	$2 \mathbf{s}_1 + \mathbf{s}_2$ has $48$ triangles, $4$ quadrilaterals and genus $5$. It is
	shown in Figure~\ref{fig:hakenSum}.

	\begin{figure}[p]
		\begin{center}
			\includegraphics[width=\textwidth]{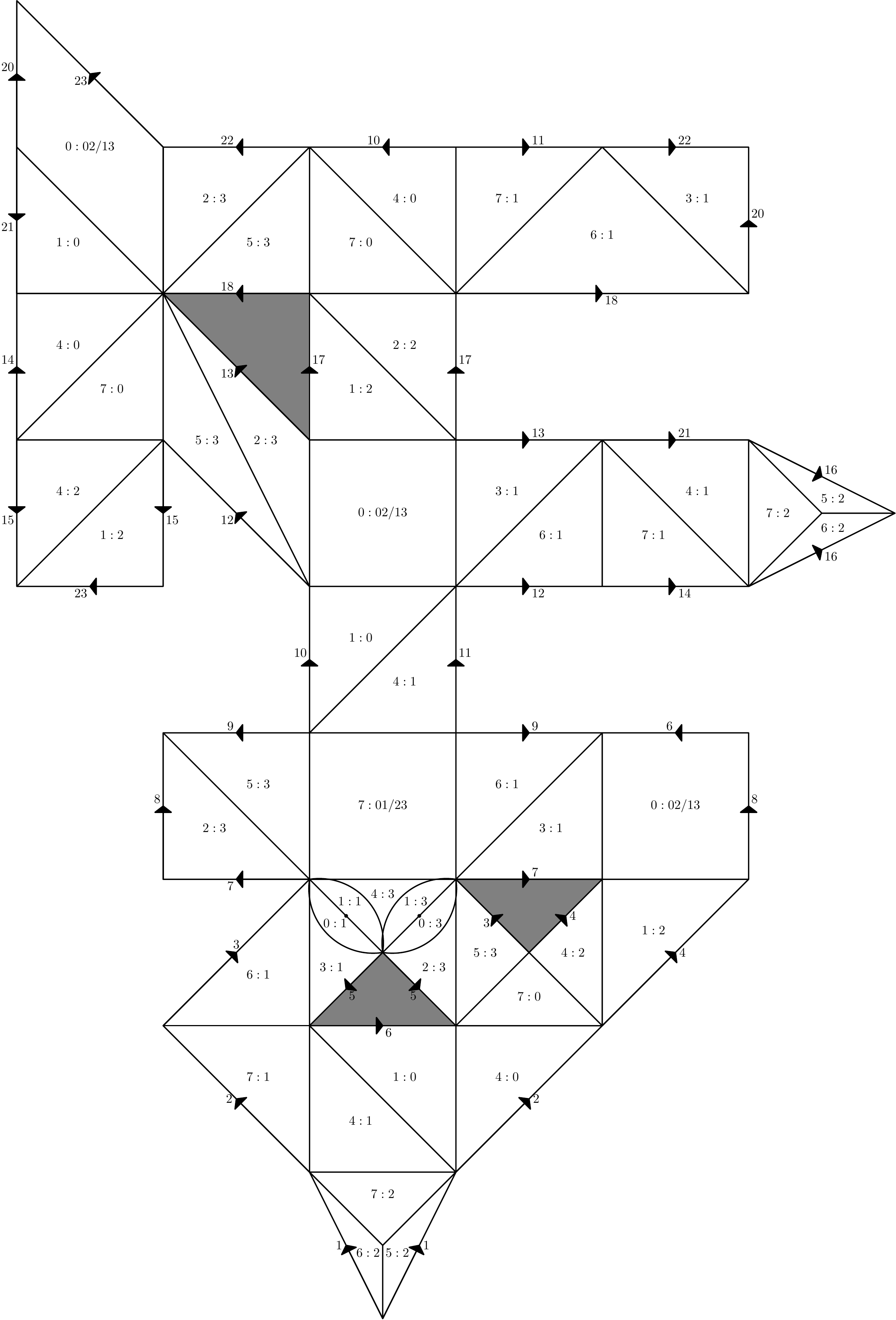}
		\end{center}
		\caption{A non-trivial Haken sum genus five surface with only four quadrilaterals. 
		\label{fig:hakenSum}}
	\end{figure}
\end{example}

\subsection{Incompressible surfaces}

Incompressible surfaces are extremely hard to find because checking for incompressibility requires running an algorithm with a doubly exponential worst case running time. However, the fourth author and others were able to obtain a complete classification of all incompressible surfaces of the $11031$ triangulations from the Hodgson-Weeks census by running extensive computer experiments \cite{Burton13EssSurfs}. As a result, there are $36449$ incompressible quad vertex normal surfaces, $19696$ of which are injective and $16753$ which occur as a double cover of a non-orientable vertex normal surface. All incompressible surfaces have genus $2 \leq g \leq 5$ and none of them attains equality in Corollary~\ref{cor:genus of essential}. However, some of them have only few more quadrilaterals than necessary. Comparing the number of quadrilaterals in these surfaces to the number of quadrilaterals in the complete set of $441331$ quad vertex normal surfaces of genus $2 \leq g \leq 5$ in the census reveals a slightly higher average number of quadrilaterals amongst the incompressible surfaces as can be seen in the following table.

	\medskip
	\begin{center}
	\begin{tabular}{|c||r|r|r|r||r|r|r|r|}
		\hline
		& \multicolumn{4}{|c||}{injective essential surfaces} & \multicolumn{4}{|c|}{double covers} \\
		\hline
		genus & \# surfaces & $\min_s q(S)$ & $\overline{q(S)}$ & $\max_S q(S)$&\# surfaces & $\min_S q(S)$ & $\overline{q(S)}$ & $\max_S q(S)$\\
		\hline	
		\hline	
		$2$&$ 16622 $& $ 6  $&$ 20 $&$ 90 $ &$ 11711 $& $ 6  $&$ 23 $&$ 114 $\\
		\hline	
		$3$&$ 3023 $& $ 15  $&$ 34 $&$ 109 $ &$ 4598 $& $ 14  $&$ 44 $&$ 226 $\\
		\hline	
		$4$&$ 46 $& $ 27  $&$ 48 $&$ 85 $ &$ 355 $& $ 24  $&$ 61 $&$ 166 $\\
		\hline	
		$5$&$ 5 $& $ 36  $&$ 43 $&$ 56 $ &$ 89 $& $ 32  $&$ 55 $&$ 98 $\\
		\hline	
		\multicolumn{9}{}{}\\
		\hline
		& \multicolumn{4}{|c||}{all essential surfaces} & \multicolumn{4}{|c|}{all (orientable) vertex surfaces} \\
		\hline
		genus & \# surfaces & $\min_S q(S)$ & $\overline{q(S)}$ & $\max_S q(S)$& \# surfaces & $\min_S q(S)$ & $\overline{q(S)}$ & $\max_S q(S)$\\
		\hline	
		\hline	
		$2$&$ 28333$ & $ 6  $&$ 21 $&$ 114 $  & $268202$ & $5$ & $13$ & $100$\\
		\hline	
		$3$&$ 7621$ & $ 14  $&$ 40 $&$ 226 $ & $120844$ & $7$ & $22$ & $187$\\
		\hline	
		$4$ &$ 401$ & $ 24  $&$ 59 $&$ 166 $ & $37877$ & $13$ & $33$ & $205$\\
		\hline	
		$5$&$ 94$ & $ 32  $&$ 55 $&$ 98 $  & $14408$ & $19$ & $42$ & $200$\\
		\hline	
		\hline
	\end{tabular}
	\end{center}

The complete data for all quad vertex normal surfaces compared to the incompressible surfaces of $2 \leq g \leq 4$ in the census is summarised 
by Figure~\ref{fig:plots}.

	\begin{figure}[h]
		\begin{center}
			\includegraphics[width=.49\textwidth]{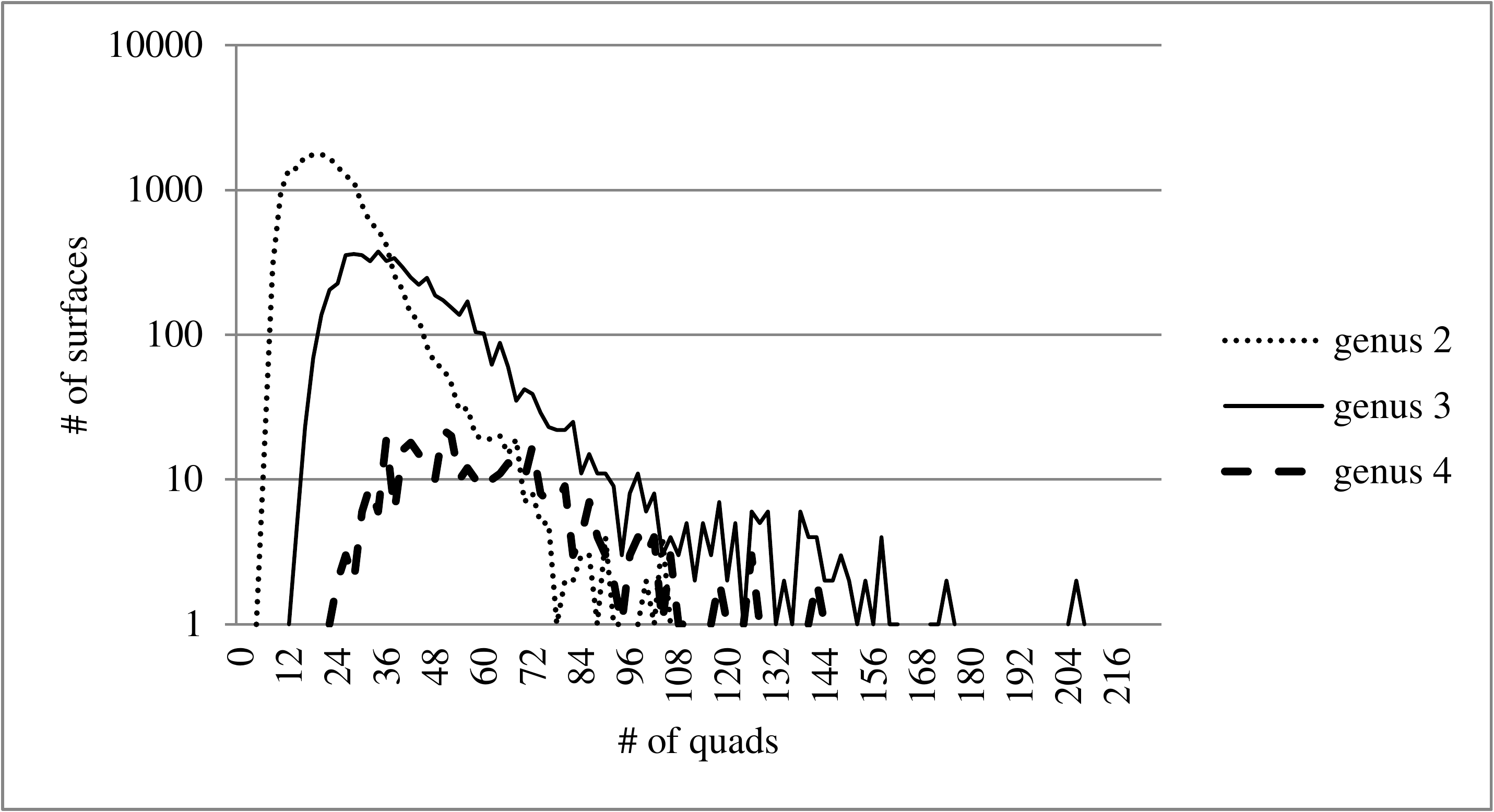} \hspace{.05cm} \includegraphics[width=.49\textwidth]{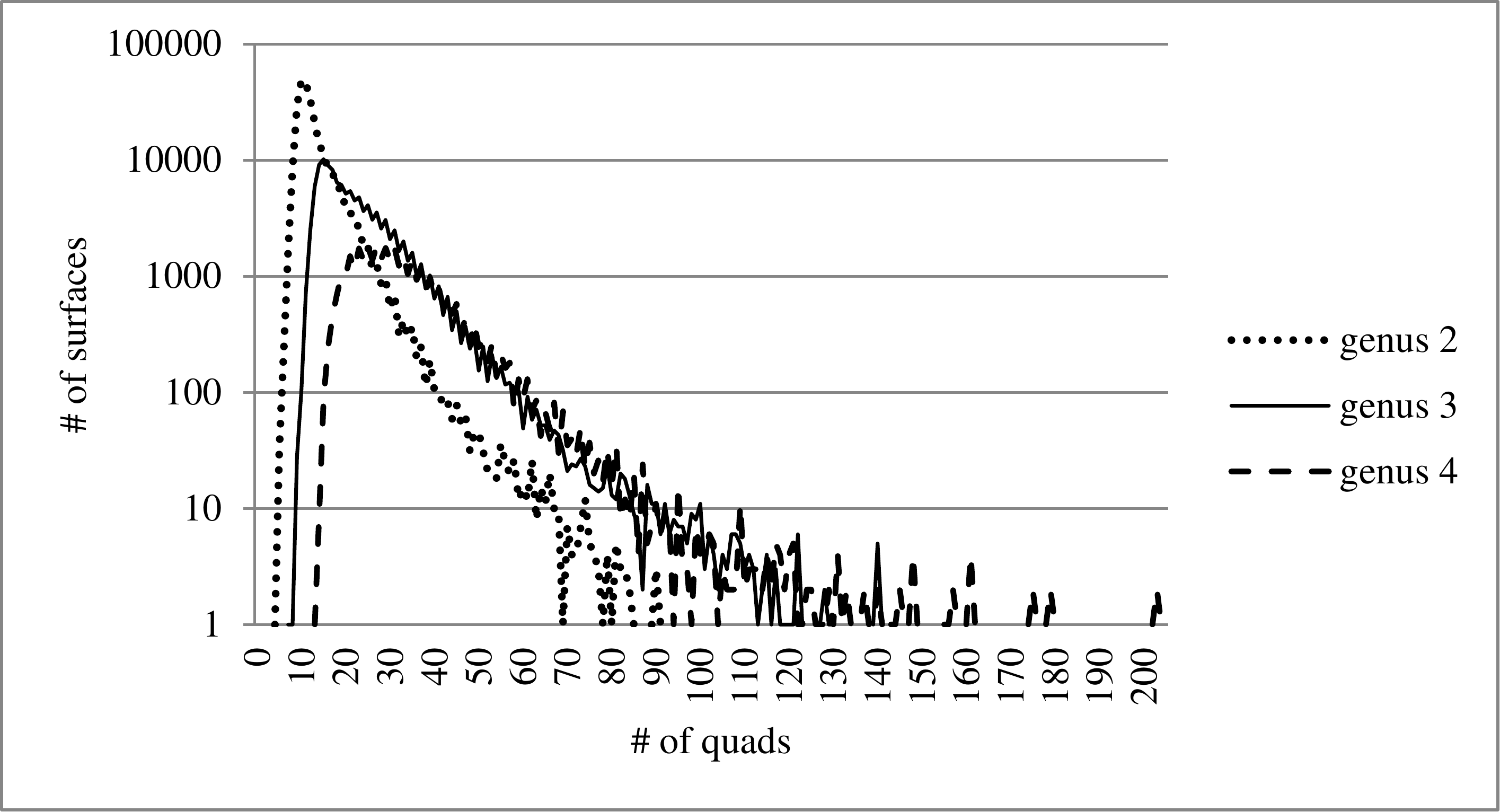}
		\end{center}
		\caption{Number of quads in the incompressible surfaces (left) and in all quad vertex normal surfaces (right) 
			of the Hodgson-Weeks census. \label{fig:plots}}
	\end{figure}

%

\begin{example}[(Incompressible surfaces with $q=2g$)]
	\label{ex:incompressible}
		By construction, the separating surfaces used in Section~\ref{sec:main results} to show that $c(S_g \times I) \leq 10g - 4$,
	are incompressible and have exactly $2g$ quadrilaterals each. Hence, the bound presented in Corollary~\ref{cor:genus of essential}
	is sharp for all values $g \geq 1$.

		Below we present the case $g = 1$ where the two quadrilateral incompressible torus lives inside  
	the minimal trivial torus bundle $T^2 \times S^1$ which can be obtained from the 
	six tetrahedra triangulation of $S_g \times I$ by identifying its two boundary components
	(cf. Figure~\ref{fig:ictorus}).

	\medskip
	\begin{center}
	\begin{tabular}{|c|c|c|c|c|}
		\hline	
		tetrahedra&face $(012)$&face $(013)$&face $(023)$&face $(123)$\\
		\hline	
		\hline	
		$0$&$4(012)$&$3(013)$&$2(023)$&$1(123)$\\
		\hline	
		$1$&$3(320)$&$4(230)$&$5(023)$&$0(123)$\\
		\hline	
		$2$&$3(231)$&$4(321)$&$0(023)$&$5(123)$\\
		\hline	
		$3$&$5(103)$&$0(013)$&$1(210)$&$2(201)$\\
		\hline	
		$4$&$0(012)$&$5(102)$&$1(301)$&$2(310)$\\
		\hline	
		$5$&$4(103)$&$3(102)$&$1(023)$&$2(123)$\\
		\hline	
	\end{tabular}
	\end{center}

	\begin{figure}[h]
		\begin{center}
			\includegraphics[width=0.6\textwidth]{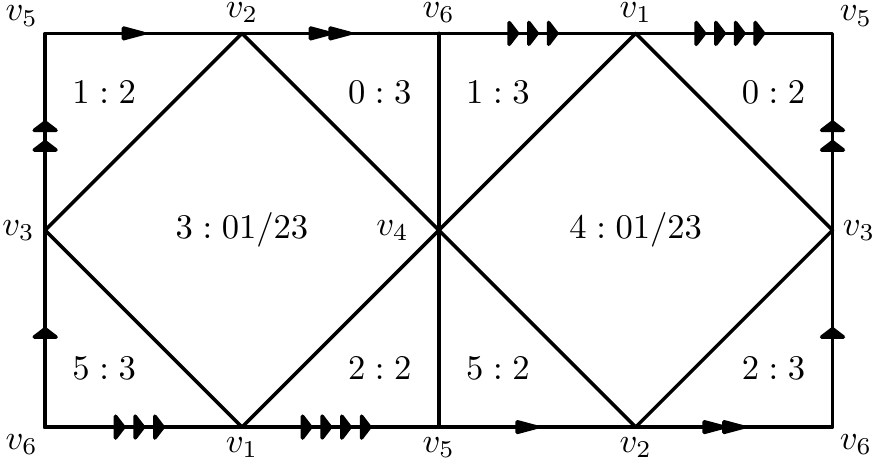}
		\end{center}
		\caption{A $2$ quadrilateral incompressible torus. 
		\label{fig:ictorus}}
	\end{figure}
\end{example}




\address{Department of Mathematics, Oklahoma State University, Stillwater, OK 74078-1058, USA}
\email{jaco@math.okstate.edu}

\address{Department of Mathematics, Oklahoma State University, Stillwater, OK 74078-1058, USA} 
\email{jjohnson@math.okstate.edu} 

\address{School of Mathematics and Physics, The University of Queensland, Brisbane, QLD 4072, Australia} 
\email{j.spreer@maths.uq.edu.au} 

\address{School of Mathematics and Statistics, The University of Sydney, Sydney, NSW 2006, Australia} 
\email{tillmann@maths.usyd.edu.au} 
\Addresses
                                                      
\end{document}